\definecolor{purple}{rgb}{0.65, 0.27, 0.95}
\definecolor{rose}{rgb}{0.8, 0, 0.4}
\definecolor{vert}{rgb}{0, 0.6, 0.4}
\DeclareRobustCommand\widecheck[1]{{\mathpalette\@widecheck{#1}}}
\def\@widecheck#1#2{%
    \setbox\z@\hbox{\m@th$#1#2$}%
    \setbox\tw@\hbox{\m@th$#1%
       \widehat{%
          \vrule\@width\z@\@height\ht\z@
          \vrule\@height\z@\@width\wd\z@}$}%
    \dp\tw@-\ht\z@
    \@tempdima\ht\z@ \advance\@tempdima2\ht\tw@ \divide\@tempdima\thr@@
    \setbox\tw@\hbox{%
       \raise\@tempdima\hbox{\scalebox{1}[-1]{\lower\@tempdima\box
\tw@}}}%
    {\ooalign{\box\tw@ \cr \box\z@}}}
\tikzstyle{vertex}=[circle, draw]
\def\smallskip{\vspace\smallskipamount}
\def\medskip{\vspace\medskipamount}
\def\bigskip{\vspace\bigskipamount}
\newskip\smallskipamount \smallskipamount=3pt plus 1pt minus 1pt
\newskip\medskipamount   \medskipamount  =6pt plus 2pt minus 2pt
\newskip\bigskipamount   \bigskipamount =12pt plus 4pt minus 4pt
\newcommand{\wrt}{\emph{w.r.t.}\@\xspace}
\newcommand{\ie}{\emph{i.e.}\@\xspace}
\def\newv#1{#1}
\DeclareMathOperator{\lcm}{lcm}
\tikzset{
    ncbar angle/.initial=90,
    ncbar/.style={
        to path=(\tikztostart)
        -- ($(\tikztostart)!#1!\pgfkeysvalueof{/tikz/ncbar angle}:(\tikztotarget)$)
        -- ($(\tikztotarget)!($(\tikztostart)!#1!\pgfkeysvalueof{/tikz/ncbar angle}:(\tikztotarget)$)!\pgfkeysvalueof{/tikz/ncbar angle}:(\tikztostart)$)
        -- (\tikztotarget)
    },
    ncbar/.default=0.5cm,
}
\tikzset{square left brace/.style={ncbar=0.5cm}}
\tikzset{square right brace/.style={ncbar=-0.5cm}}
\tikzset{round left paren/.style={ncbar=0.5cm,out=120,in=-120}}
\tikzset{round right paren/.style={ncbar=0.5cm,out=60,in=-60}}
\newtheorem{repeatthm@}{Theorem}
\newtheorem{repeatlemma@}{Lemma}
\theoremstyle{definition}
\newtheorem{definition}{Definition}[section]
\newtheorem{proposition}[definition]{Proposition}
\newtheorem{theorem}[definition]{Theorem}
\newtheorem{lemma}[definition]{Lemma}
\newtheorem{conjecture}[definition]{Conjecture}
\newtheorem{corollary}[definition]{Corollary}
\newcommand{\setst}{V}
\newcommand{\structure}[1]{\ensuremath{\left(#1\right)}\xspace}
\newcommand{\nodedds}{v}
\newcommand{\set}[1]{\ensuremath{\left\{#1\right\}}\xspace}
\newcommand{\dds}{G}
\newcommand{\ddsps}{H}
\newcommand{\ddspps}{K}
\newcommand{\tra}{h}
\newcommand{\ori}{o}
\newcommand{\oris}{\mathcal{O}}
\newcommand{\uheight}{\mathcal{H}}
\newcommand{\multi}{T}
\newcommand{\matrixt}[2]{\multi^{#1}_{#2}}
\newcommand{\ind}{g}
\newcommand{\indexp}{r}
\newcommand{\pp}{\mathcal{P}}
\newcommand{\tp}{\mathcal{T}}
\newcommand{\tpmatrix}[2]{N^{#1}_{#2}}
\newcommand{\ncmp}{\ell}
\newcommand{\N}{\mathbb{N}}
\newcommand{\ring}{\mathcal{D}}
\newcommand{\coef}{A}
\newcommand{\rterm}{B}
\newcommand{\qtvar}{\nu}
\newcommand{\mon}{m}
\newcommand{\esp}{w}
\newcommand{\cycle}{\mathcal{C}}
\newcommand{\tabs}[1]{\check{#1}}
\newcommand{\multiset}[1]{[#1]}
\newcommand{\unroll}[2]{U_{#2}(#1)}
\newcommand{\roll}[2]{R_{#2}(#1)}
\newcommand{\vunroll}{\mathcal{V}}
\newcommand{\eunroll}{\mathcal{E}}
\newcommand{\unrollp}{\star}
\newcommand{\iso}{\cong}
\newcommand{\notiso}{\not\cong}
\newcommand{\cut}[2]{C_{#2}(#1)}
\newcommand{\homo}[1]{\mathrm{hom}(#1)}
\newcommand{\homset}[1]{\mathrm{Hom}(#1)}
\newcommand{\itree}{I}
\newcommand{\bijHom}{\varphi}
\newcommand{\ftree}{F}
\newcommand{\onehom}{\tau}
\begin{document}

\title{Decomposition and factorisation of transients in Functional Graphs}

\author[UCA]{François Doré}
\ead{francois.dore@univ-cotedazur.fr}
\author[UCA]{Enrico Formenti}
\ead{enrico.formenti@univ-cotedazur.fr}
\author[UP]{Antonio E. Porreca}
\ead{antonio.porreca@lis-lab.fr}
\author[UCA,JSP]{Sara Riva}
\ead{sara.riva@univ-cotedazur.fr}

\address[UCA]{Universit\'e C\^ote d'Azur, CNRS, I3S, France}
\address[JSP]{Univ. Bordeaux, CNRS, Bordeaux INP, LaBRI, UMR 5800, F-33400 Talence, France}
\address[UP]{Université Publique, France}
\begin{abstract} 
Functional graphs (FGs) model the graph structures used to analyse the behaviour of functions from a discrete set to itself. In turn, such functions are used to study real complex phenomena evolving in time. As the systems involved can be quite large, it is interesting to decompose and factorise them into several subgraphs acting together. Polynomial equations over functional graphs provide a formal way to represent this decomposition and factorisation mechanism, and solving them validates or invalidates hypotheses on their decomposability. The current solution method breaks down a single equation into a series of \emph{basic} equations of the form $A\times X=B$ (with $A$, $X$, and $B$ being FGs) to identify the possible solutions. However, it is able to consider just FGs made of cycles only. This work proposes an algorithm for solving these basic equations for general connected FGs. 
By exploiting a connection with the cancellation problem, we prove that the upper bound to the number of solutions is closely related to the size of the cycle in the coefficient $A$ of the equation. The cancellation problem is also involved in the main algorithms provided by the paper. We introduce a polynomial-time semi-decision algorithm able to provide constraints that a potential solution will have to satisfy if it exists. Then, exploiting the ideas introduced in the first algorithm, we introduce a second exponential-time algorithm capable of finding all solutions by integrating several `hacks' that try to keep the exponential as tight as possible. 
\end{abstract}
\begin{keyword}
Functional Graph, Transient Dynamics, Graph Direct Product
\end{keyword}

\maketitle

\section{Introduction}
\label{sec:intro}

Functional graphs (FGs) are directed graphs with outdegree 1.
They are structurally equivalent to well-known formal models called Discrete-time (discrete-space) Dynamical Systems (DDSs). If the set of nodes is finite, then each node is \emph{ultimately periodic}, \ie, a path originating in a state will eventually run into a cycle. Hence, these graphs are characterised by a finite number of components (\ie, connected components of the underlying undirected graph) with just one cycle each. Then, the nodes can be classified as \emph{cyclic} if they belong to a cycle, or \emph{transient} otherwise.

From a mathematical point of view, FGs are a simple model, but they find important applications in the description of the dynamics of many well-known discrete systems, namely, Boolean automata and  networks, genetic regulatory networks, cellular automata, and many others \cite{Sene12, bower2004computational, AlonsoSanz12}. Over time, an important research direction has been to investigate the expected number of components of a random mapping. This question was opened by Metropolis and Ulam in 1953 \cite{metropolis1952property} and answered by Kruskal one year later \cite{kruskal1954expected}. Since then, several research directions have emerged to study, for example, the number of cycles, trajectories, and the number and the size of the components of randomly generated FGs \cite{romero2005grasping}. Considering FGs, the components are a decomposition of the set of nodes in disjoint minimal non-empty invariant sets (\ie, $f^{-1}(\setst)=\setst$). Given this important aspect, Katz studied the probability that a random mapping is indecomposable (\ie, the graph has just one component) \cite{katz1955probability}, while Romero and Zertuche introduced explicit formulae for the statistical distribution of the number of connected components of the system \cite{romero2003asymptotic}. 

The cycles of the components of an FG represent the asymptotic behaviour of the system being modelled and they are often called \emph{attractors} for this reason. They can be particularly important when the dynamics modelled by the graph arise from biological applications. As an example, attractors of Boolean networks have been linked to biological phenotypes, making them a crucial factor in the analysis of these models~\cite{SCHWAB2020571}. The idea of trying to understand whether a deterministic discrete phenomenon is decomposable and/or factorisable into smaller dynamics is a problem also studied in the world of DDSs \cite{kadelka2022decomposition, naquin2022factorisation}. A newly developed research direction tries to split an FG (\ie, DDS) into simpler ones by adopting a suitable algebraic approach. Indeed, it has been proven that DDSs equipped with sum (\ie, disjoint union of components) and product operations (\ie, direct product of graphs) form a commutative semiring~\cite{dorigatti2018, weichsel1962kronecker}. 
Thanks to this algebraic setting, one can write polynomial equations in which coefficients and unknowns are FGs. Then, to simplify a certain FG (or to model a hypothesis about the decomposability of a FG), one needs to solve a polynomial equation having the FG as its constant right-hand side. If a solution exists, it can be combined with the coefficients of the equation to obtain the original dynamics through a composition of smaller FGs.

According to the state of the art, to solve these equations, one needs to be able to solve a finite number of \emph{basic} linear equations of the form $A \times X=B$, where $A$ and $B$ are known FGs and $X$ is unknown~\cite{dennunzio2019solving}. To the best of our knowledge, a technique has been introduced to solve these equations, and more general ones, only over DDSs without transient states~\cite{dennunzio2019solving, formenti2021mdds}. 
For this reason, this work aims to study basic linear equations over arbitrary connected FGs (namely $A$, $X$, and $B$) including transient states. 
The idea is to exploit the connection with the classical \emph{cancellation problem} on graphs, which aims at establishing under which conditions the isomorphism of $A\diamond X$ and $A\diamond Y$ implies that $X$ and $Y$ are isomorphic (for a graph product $\diamond$)~\cite{hammack2011handbook}. The cancellation problem has been investigated for the Cartesian, the strong, and the direct products over different classes of graphs (oriented and not oriented, and admitting self-loops or not)~\cite{lovasz1971cancellation, hammack2009direct, hammack2011handbook}. From the literature, we know that cancellation holds, under certain conditions, for the Cartesian and the strong products (over graphs and digraphs), but it can fail for the direct product. Moreover, it is known that this problem is more challenging over digraphs than over graphs. Some questions are still open concerning the cancellation problem for the direct product over digraphs. Some work has been done to characterise \emph{zero divisors} (digraphs for which the cancellation fails), or to find all digraphs $Y$ such that $A \times X \iso A \times Y$, with $A$ and $X$ fixed and $\iso$ is equality up to isomorphisms~\cite{hammack2014zero}. However, the results obtained are concerned with generic digraphs, while this paper focuses on connected digraphs with outgoing degree one to have further interesting results. The interest of this research direction is also confirmed by the fact that, in parallel to the present work, Naquin and Gadouleau~\cite{naquin2022factorisation} also studied the factorisation of FGs and their cancellation properties. In particular, they investigated the cancellation over connected FGs, the uniqueness of the factorisation over a specific class of FGs (namely the connected ones having a fixed point) as well as a way to compute it, and they discuss the uniqueness of $n$-th roots.

In this paper, we show that cancellation holds over structures suitable for modelling FGs, namely  
infinite in-trees (Theorem~\ref{th:isoinfinitetree}), with a novel use of a graph homomorphism counting technique. 
Thanks to this approach, we introduce the first upper bound for the number of solutions of basic linear equations over connected FGs (Theorem~\ref{upperbound}). The existence of such an upper bound is fundamental since it allows us to conceive a polynomial-time algorithm (Sec.~\ref{poly}) which:
\begin{itemize}
    \item either provides precise constraints on potential solutions,
    \item or discards impossible equations when it is not possible to determine such constraints.
\end{itemize}
The ideas and results of the first algorithm are improved and extended to obtain a second one, which is able to provide all solutions (if any) of basic equations (Sec.~\ref{exp}) in exponential time. In order to flatten as much as possible the exponential growth and to reduce the number of considered solutions at each step of the algorithm, we explore some properties of the distance between nodes and their closest cyclic node (also called \emph{depth}), and how this metric behaves with respect to the direct product.
Finally, the performances of the second algorithm are evaluated through experiments. These experiments highlight the exploitability of the algorithm and analyse the dependencies between the performances and different properties of the instances (such as cycle lengths, maximum indegree, and the number of nodes in the graphs).
These algorithms are a fundamental step towards a first approach to solving polynomial equations (with constant right-hand side) over generic FGs.

\section{Functional Digraphs}
\label{sec:FG}

A digraph $\dds = \structure{\setst, E}$ is \emph{functional} (an FG) if and only if each node $\nodedds \in V$ has exactly one outgoing edge. 
In other words, a FG is a graphical representation of a function $f$ from $\setst$ to itself (\ie, an \emph{endofunction}) such that $E=\set{(u,v)\in\setst\times\setst\,\mid\,v=f(u)}$. From now on, when no confusion is possible, the symbol for a generic FG $G$ used interchangeably with the symbol $f$ of function which it represents.
A fundamental property of FGs is that each connected component (or more precisely, \emph{weakly connected component}) has exactly one cycle. For any FG $G$, let $\ncmp$ be the number of its connected components.

A vertex $\nodedds\in \setst$ is a \textbf{cyclic node} of $\dds$ if there exists an integer $p>0$ such that $f^{p}(\nodedds)=\nodedds$. The smallest such $p$ is called the \emph{feedback} (or \emph{period}) of $\nodedds$. 
A \emph{cycle} (of length $p$) of $\dds$ is any set $\cycle=\{\nodedds,f(\nodedds), \ldots, f^{p-1}(\nodedds)\}$ where $\nodedds\in \setst$ is a cyclic node of feedback $p$. 

Given a cycle $\cycle$ and a node $\nodedds\not\in\cycle$, if $\tra$ is the smallest natural number such that $f^\tra(\nodedds) \in \cycle$, we call the set $\{\nodedds,f(\nodedds), \ldots,$ $f^{\tra-1}(\nodedds)\}$ a \textbf{transient of length $\tra$} and its points \textbf{transient nodes}. Then, $\tra$ is called the \emph{depth} of $\nodedds$.

Thus, in FGs,  any outgoing path of any node consists of at most two disjoint parts:  a transient and a cycle.
Clearly, if we denote by  $\pp_\dds$ the set of cyclic vertices and by  $\tp_\dds$ the set of transient ones of $\dds$, we have  $\tp_\dds \cup \pp_\dds=\setst$.

Each component $\dds_j=\structure{\setst_j,E_j}$ with $j\in\set{1,\ldots,\ncmp}$ has one cycle $\cycle_j$ and $\setst_j=\tp_j \cup \cycle_j$. The transient nodes $\tp_j$ of a component $j$ are the nodes $\nodedds \in \tp_\dds$ such that $f^{\tra}(\nodedds)\in\cycle_j$ for some $\tra>0$.

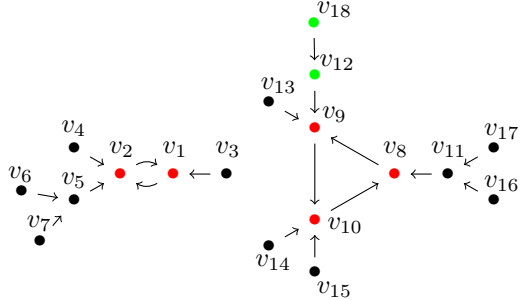
\begin{figure}[ht!]
    \begin{center}
\begin{tikzpicture}
\begin{scope}[scale=0.7]
    \node[draw=none,fill=none, scale=1, anchor=south] (1) at (-1.3,0.15) {$v_1$};
    \node[draw=none,fill=none, scale=1, anchor=south] (2) at (-2.3,0.15) {$v_2$};
    \node[draw=none,fill=none, scale=1, anchor=south] (3) at (-.3,0.15) {$v_3$};
    \node[draw=none,fill=none, scale=1, anchor=south] (4) at (-3.2,0.55) {$v_4$};
    \node[draw=none,fill=none, scale=1, anchor=south] (5) at (-3.2,-0.45) {$v_5$};
    \node[draw=none,fill=none, scale=1, anchor=south] (6) at (-4.2,-0.33) {$v_6$};
    \node[draw=none,fill=none, scale=1, anchor=south] (7) at (-3.85,-1.25) {$v_7$};
    \node[draw=none,fill=none, scale=1, anchor=south] (8) at (2.85,0.15) {$v_8$};
    \node[draw=none,fill=none, scale=1, anchor=south] (9) at (1.7,0.8) {$v_9$};
    \node[draw=none,fill=none, scale=1, anchor=south] (10) at (1.9,-1.3) {$v_{10}$};
    \node[draw=none,fill=none, scale=1, anchor=south] (11) at (3.85,0.15) {$v_{11}$};
    \node[draw=none,fill=none, scale=1, anchor=south] (12) at (1.75,1.8) {$v_{12}$};
    \node[draw=none,fill=none, scale=1, anchor=south] (13) at (0.65,1.35) {$v_{13}$};
    \node[draw=none,fill=none, scale=1, anchor=south] (14) at (0.55,-2) {$v_{14}$};
    \node[draw=none,fill=none, scale=1, anchor=south] (15) at (1.7,-2.5) {$v_{15}$};
    \node[draw=none,fill=none, scale=1, anchor=south] (16) at (4.85,-0.49) {$v_{16}$};
    \node[draw=none,fill=none, scale=1, anchor=south] (17) at (4.85,0.49) {$v_{17}$};
    \node[draw=none,fill=none, scale=1, anchor=south] (18) at (1.7,2.8) {$v_{18}$};
	\begin{oodgraph}
		\addcycle[xshift=-1.83cm, nodes prefix = a,color=red]{2};
		\addbeard[attach node = a->1]{1};
		\addbeard[attach node = a->2]{2};
		\addbeard[attach node = a->2->2]{2};

		\addcycle[xshift=1.83cm, nodes prefix = b,color=red]{3};
		\addbeard[attach node = b->1]{1};
		\addbeard[attach node = b->2,rotation angle=30]{1};
		\addbeard[attach node = b->2,rotation angle=-30,color=green]{1};
		\addbeard[attach node = b->3]{2};
		\addbeard[attach node = b->1->1]{2};
		\addbeard[attach node = b->2->1,color=green,rotation angle=-14]{1};
	\end{oodgraph}
	\end{scope}
\end{tikzpicture}
\end{center}
    \caption{A functional graph $\dds$ with two cycles and some transients nodes connected to both cycles. We have $\dds=(\setst,f)$ with $\setst=\pp_\dds\cup\tp_\dds$, $\pp_\dds=\cycle_1 \cup \cycle_2=\set{v_1,v_2}\cup\set{v_8,v_9,v_{10}}$ and $\tp_\dds=\tp_1 \cup \tp_2=\set{v_3,v_4,v_5,v_6,v_7}\cup\set{v_{11},v_{12},v_{13},v_{14},v_{15},v_{16},v_{17},v_{18}}$. As an example of a transient, we can consider the green nodes. Node $v_{18}$ is contained in transient $\set{v_{18},v_{12}}$ of length $2$.}
    \label{fig:ex_FG}
\end{figure}

\medskip
We consider two algebraic operations over FGs: the \emph{sum} is the disjoint union of the components of two graphs (denoted $+$), and the \emph{product} is the direct product (denoted $\times$) of the graphs.

\begin{definition}[Direct product of digraphs] \label{FGprod}
	Given two FGs, $\dds=(\setst,E)$ and $\dds'=(\setst',E')$, their product $\dds \times \dds'$ is a digraph where the set of nodes is $\setst \times \setst'$ and the set of edges is $\{((\nodedds,\nodedds'),(u,u')) \mid (\nodedds,u) \in E \land (\nodedds',u')$ $\in E'\}$.
\end{definition}

The graph obtained from the direct product corresponds to the representation of the function $(f \times f')$, with $f$ and $f'$ the functions of $\dds$ and $\dds'$, and defined as $(f \times f')(v,u)=(f(v),f'(u))$ for all $(v,u)\in \setst \times \setst'$.

A known property of this operation is that two connected FGs with cycle lengths respectively $|\cycle|$ and $|\cycle'|$ will generate a graph with $\gcd(|\cycle|,|\cycle'|)$ components with cycles of length $\lcm(|\cycle|,|\cycle'|)$~\cite{dennunzio2019solving,hammack2011handbook}. Here, $|S|$ denotes the cardinality of the set $S$. We remark that the result of a direct product of connected FGs is connected if and only if $|\cycle|$ and $|\cycle'|$ are coprime.

\begin{figure}
    \centering
\begin{tikzpicture}[scale=0.7]
    \node[draw=none,fill=none, scale=2] (per) at (3.7,0){\textcolor{gray}{$\times$}};
    \node[draw=none,fill=none, scale=2] (eq) at (11,0){\textcolor{gray}{$=$}};
    
    \node[draw=none,fill=none, scale=0.7, anchor=south] (1) at (-0.5,0.2) {$1$};
    \node[draw=none,fill=none, scale=0.7, anchor=south] (2) at (0.5,0.2) {$2$};
    \node[draw=none,fill=none, scale=0.7, anchor=south] (3) at (-1.35,0.6) {$3$};
    \node[draw=none,fill=none, scale=0.7, anchor=south] (4) at (-1.35,-0.39) {$4$};
    \node[draw=none,fill=none, scale=0.7, anchor=south] (5) at (-2.3,1) {$5$};
    \node[draw=none,fill=none, scale=0.7, anchor=south] (6) at (1.5,0.2) {$6$};
    \node[draw=none,fill=none, scale=0.7, anchor=south] (b2) at (6,0.2) {$b$};
    \node[draw=none,fill=none, scale=0.7, anchor=south] (d2) at (8,0.2) {$d$};
    \node[draw=none,fill=none, scale=0.7, anchor=south] (a2) at (7.35,0.8) {$a$};
    \node[draw=none,fill=none, scale=0.7, anchor=south] (c2) at (7.35,-1.2) {$c$};
    \node[draw=none,fill=none, scale=0.7, anchor=south] (e2) at (7.35,1.8) {$e$};
    \node[draw=none,fill=none, scale=0.7, anchor=south] (f2) at (7.35,2.8) {$f$};
    \node[draw=none,fill=none, scale=0.7, anchor=south] (g2) at (7.35,3.8) {$g$};
    \node[draw=none,fill=none, scale=0.7, anchor=south] (h2) at (9,0.6) {$h$};
    \node[draw=none,fill=none, scale=0.45, anchor=south] (2_a) at (14.55,2.8) {$(2,a)$};
    \node[draw=none,fill=none, scale=0.45, anchor=south] (1_b) at (14,3.4) {$(1,b)$};
    \node[draw=none,fill=none, scale=0.45, anchor=south] (2_c) at (13.45,2.8) {$(2,c)$};
    \node[draw=none,fill=none, scale=0.45, anchor=south] (1_d) at (14,2.2) {$(1,d)$};
    \node[draw=none,fill=none, scale=0.7, anchor=south] (i2) at (9,-1) {$i$};
    \node[draw=none,fill=none, scale=0.45, anchor=south] (1_a) at (14.55,-3.2) {$(1,a)$};
    \node[draw=none,fill=none, scale=0.45, anchor=south] (1_c) at (13.45,-3.2) {$(1,c)$};
    \node[draw=none,fill=none, scale=0.45, anchor=south] (2_b) at (14,-2.6) {$(2,b)$};
    \node[draw=none,fill=none, scale=0.45, anchor=south] (2_d) at (14,-3.8) {$(2,d)$};
    \node[draw=none,fill=none, scale=0.7, anchor=south] (5_f) at (17.7,-3.3) {$(5,f)$};
    \node[draw=none,fill=none, scale=0.7, anchor=south] (5_c) at (16.2,-5.4) {$(5,c)$};
    \node[draw=none,fill=none, scale=0.7, anchor=south] (2_g) at (16.5,-0.4) {$(2,g)$};
	\begin{oodgraph}
		\addcycle[nodes prefix = a]{2};
		\addbeard[attach node = a->1]{1};
		\addbeard[attach node = a->2]{2};
		\addbeard[attach node = a->2->1]{1};
	\end{oodgraph}
	\begin{oodgraph}
		\addcycle[xshift=7cm, nodes prefix = e]{4};
		\addbeard[attach node = e->2]{1};
		\addbeard[attach node = e->1]{2};
		\addbeard[attach node = e->2->1]{1};
		\addbeard[attach node = e->2->1->1]{1};
		\end{oodgraph}
	
	\begin{oodgraph}
		\addcycle[xshift=14cm,yshift=3cm, nodes prefix = b]{4};
		\addbeard[attach node = b->1]{3};
		\addbeard[attach node = b->2]{2};
		\addbeard[attach node = b->3]{1};
		\addbeard[attach node = b->4]{8};
		\addbeard[attach node = b->1->2]{3};
		\addbeard[attach node = b->2->2]{2};
		\addbeard[attach node = b->4->4]{1};
		\addbeard[attach node = b->1->2->3]{1};
		\addbeard[attach node = b->1->2->1]{2};
		\addcycle[xshift=14cm,yshift=-3cm,nodes prefix = c]{4};
		\addbeard[attach node = c->2]{1};
		\addbeard[attach node = c->3]{2};
		\addbeard[attach node = c->4]{5};
		\addbeard[attach node = c->1]{5};
		\addbeard[attach node = c->3->2]{1};
		\addbeard[attach node = c->1->1]{3};
		\addbeard[attach node = c->1->5]{2};
		\addbeard[attach node = c->1->3]{1};
		\addbeard[attach node = c->1->5->2]{3};

	\end{oodgraph}
\end{tikzpicture}
    \caption{An example of the product operation between connected FGs $\dds$ and $\dds'$. For clarity, only the names of some nodes are shown. According to the cycle lengths of the two graphs, the result of the product operation $\dds \times \dds'$ consists of $\gcd(2,4)=2$ components with cycles of length $\lcm(2,4)=4$. Intuitively, we can see the two resulting cycles as the two possible ``parallel" executions of the cyclic dynamics ($\set{(1,a),(2,b),(1,c),(2,d)}$ or $\set{(2,a),(1,b),(2,c),(1,d)}$). The set of vertices of the resulting graph is the Cartesian product $\setst\times\setst'$. A generic node $(v,v')$ is cyclic in the result iff $v$ is a cyclic node of $\dds$ and $v'$ is a cyclic node of $\dds'$, otherwise $(v,v')$ is transient. An alternative way of expressing the transients connected to a cyclic node $(v,v')$ is by retracing the edges of the original graphs backwards (\ie, by considering all possible predecessors). For example, the set of possible predecessors of the node $(1,a)$ is the Cartesian product of predecessors of $1$ and of $a$ (\ie, $\set{2,3,4}$ and $\set{d,e}$, respectively). In fact, we see that $(1,a)$ has $6$ incoming edges. Note that backtracking the edges in the transient nodes will produce a transient of depth equal to the minimum of the original transients depths (see $\set{(5,f),(3,e)}$ for example). However, by backtracking only in cyclic nodes of one graph and only in transient nodes of the other graph, we obtain a copy of the transient (consider $\set{(2,g),(1,f),(2,e)}$ and $\set{(5,c),(3,d)}$ for example).
     }
    \label{fig:prod}
\end{figure}
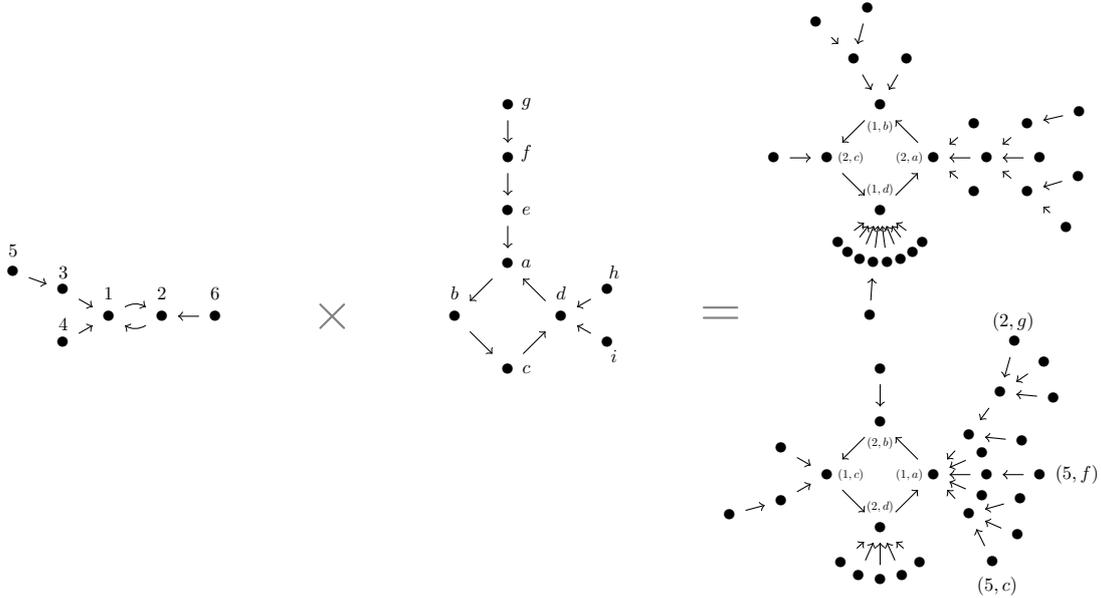

\medskip

It has been proved that the set of FGs (up to isomorphism) equipped with these two operations has the structure of a commutative semiring $\ring$ \cite{dorigatti2018}. This naturally leads to polynomial equations over these graphs which are useful to study decompositions and factorisations of FGs. 

Let us point out that, in the following, we will always consider functional graphs up to isomorphism.

Consider the semiring $\ring[X_1,\ldots,X_\qtvar]$ of polynomials over $\ring$ in the variables $X_1,\ldots, X_\qtvar$, naturally induced by $\ring$.
Unfortunately, the problem of deciding if a solution exists is undecidable for multivariate equations~\cite{dorigatti2018}.
However, when the right-hand side of the equation is constant (\ie, it contains no variables), the problem turns out to be
decidable, but no polynomial-time algorithm able to output the solutions is known.

Considering these computability results and the aim to study decompositions and factorisations, we will be interested in the case of equations with a constant right-hand side and polynomial left-hand side admitting monomials with just one variable. More formally, we will consider equations of the form

\begin{equation} 
	\coef_1\times X_1^{\esp_1} + \cdots + \coef_\mon \times X_\mon^{\esp_\mon}\iso \rterm.
	\label{eq:problemSolvedOriginal}
\end{equation}

In such an equation, for each monomial $z$ (with $1\leq z \leq m$), we denote the coefficient by $\coef_z$ and the variable by $X_z$. Then, we use $\rterm$ to denote the constant term. Let us point out that coefficients, variables, and the constant term are in $\ring$, but the powers $\esp_z \in \N$. Moreover, the $\coef_z$ (and $X_z$) denotes $m$ potentially different FGs and not $m$ components of one graph.
Note that since $\ring$ does not provide a subtraction operation, one
cannot move terms between the two sides of the equation.

In \cite{formenti2021mdds}, a pipeline to enumerate the solutions of Equation \eqref{eq:problemSolvedOriginal} considering just the cycles of coefficients, unknowns and constant right-hand side has been introduced. 
Solutions found are just candidate solutions for the original equation, and on the other hand, solutions
to the original equation must satisfy the restricted equation.

\section{Basic equations over FGs}
\label{sec:transients}

Previous works show that to solve a generic equation over the cyclic part of FGs, it is possible to \emph{enumerate} the solutions of simpler equations
of the form $\coef \times X \iso \rterm$ (with $\coef$, $X$ and $\rterm$ generic FGs)~\cite{dennunzio2019solving}.
Consequently, the aim of this work is to propose an approach to study not only the possible cycles of an FG $X$ but also all its possible transients.

Since the product operation over FGs is distributive over connected components, we restrict our attention to equations with $\coef$ and $X$ made by a single connected component.

Let $B$ also be connected. We will refer to their sets of cyclic nodes as $\cycle_A$, $\cycle_X$ and $\cycle_B$. Note that since $A$, $X$ and $B$ are connected, we will have $\cycle_A=\pp_A$ and similarly for $X$ and $B$. Our goal is then to find all FGs $X$ that, once multiplied by $A$, generate a graph having at least a component isomorphic to $B$.
If $\gcd(|\cycle_A|,|\cycle_X|)=1$, we have $ A \times X \iso B$, and if $\gcd(|\cycle_A|,|\cycle_X|)>1$, we will write $A \times X \supset B$ (to be understood as $B$ is a subgraph of $A \times X$) because $B$ is just one of the connected components of the result of the product operation.
Then, in general, we write
\begin{equation}\label{DDSineq}
    A \times X \supseteq B
\end{equation} 
to consider both scenarios.

\subsection{The t-abstraction}\label{notation}

%
By introducing an abstraction (called the \emph{t-abstraction}) over the FGs $A$, $X$, and $B$, we obtain new equations, which allow us to compute some information about the transient behaviour of the possible solutions of $X$.
More intuitively, these solutions give us strong constraints over the actual solutions of Equation~\eqref{DDSineq}, which drastically reduce the number of possible graphs.

\medskip 

Let us now consider a generic FG $G$ with $\ncmp$ components. First of all, let us fix an arbitrary indexing function for cycles.
Given a cycle $\cycle_j$ (for $j\in\set{1,\ldots ,\ncmp}$), fix a
node $v\in\cycle_j$ and define the \emph{index function} $\ind:\set{0,\dots,|\cycle_j|-1}\longrightarrow\cycle_j$ such that $\ind(\indexp)=f^{\indexp}(\nodedds)$.
We now denote $\tpmatrix{\dds_j}{\indexp,\tra}$ the set of transient nodes of
$\setst_j$ for which there exists a path of length $\tra$ ending in $\ind(\indexp)$ and made only of transients nodes except for the last one. From now on, we will refer to this set as \emph{the nodes at the layer} (or level) $\tra$ for the cyclic vertex $\ind(\indexp)$. We remark that $j$, $\indexp$ and $\tra$ respect specific ranges, namely $j\in\set{1,\ldots,\ncmp}$,
$\indexp\in\set{0,\ldots,|\cycle_j|-1}$, and $\tra\in\set{1,\ldots,\tra^{\max}_j}$, where $\tra^{\max}_j$ is the maximum length of a transient in the $j$-th component.
These sets can be conveniently represented by a matrix 
$\tpmatrix{\dds_j}{}$ of sets in which an element at position $(r,h)$ is
given by
\[
\tpmatrix{\dds_j}{\indexp,\tra}=\begin{cases}f^{-1}(\ind(\indexp))\setminus\cycle_j & \text{if $h=1$}\\
\bigcup_{\nodedds\in\tpmatrix{\dds_j}{\indexp,\tra-1}} f^{-1}(\nodedds) & \text{if $h>1$.}\end{cases}
\]
We remark that $\tpmatrix{\dds_j}{}$ is a matrix containing the transient nodes of a component $\dds_j$ arranged by their height and first cyclic node in the orbit. This matrix is our starting point to introduce t-abstractions.

\begin{figure}
\centering
\begin{tikzpicture}
\begin{scope}[xshift=-6.5cm, yshift=0.5cm]
    \node (0) at (-1,0) {$\matrixt{\dds}{}=$
    \resizebox{0.4\textwidth}{!}{
$\begin{pNiceMatrix}[first-row,first-col,nullify-dots]

      & 1    & 2 &  3      & 4          \\
0    & \multiset{2} & \multiset{1} & \multiset{0} & \emptyset   \\
1 &\multiset{3} & \multiset{2,2} & \multiset{1,1,1,0} & \multiset{0,0,0} \\
2      & \multiset{3} & \multiset{1,1} & \multiset{3,1} & \multiset{0,0,0,0}     \\
3    & \multiset{5} & \multiset{0,0,1,1} & \multiset{0,0} & \emptyset  
\end{pNiceMatrix}$}};
\node[below=.4 of 0] {$\tpmatrix{\dds}{}=$
\resizebox{0.6\textwidth}{!}{
$\begin{pNiceMatrix}[first-row,first-col,nullify-dots]

      & 1    & 2 &  3           \\
0    & \set{v_{5}} & \set{v_{14}} & \emptyset  \\
1 &\set{v_{6},v_{7}} & \set{v_{15},v_{16},v_{17},v_{18}} & \set{v_{23},v_{24},v_{25}}  \\
2      & \set{v_{8},v_{9}} & \set{v_{19},v_{20}} & \set{v_{26},v_{27},v_{28},v_{29}}     \\
3    & \set{v_{10},v_{11},v_{12},v_{13}} & \set{v_{21},v_{22}} & \emptyset 
\end{pNiceMatrix}$}};

\end{scope}
\begin{scope}[scale=0.8]
    \node[draw=none,fill=none, scale=1, anchor=south] (1) at (0.3,0.85) {$v_1$};
    \node[draw=none,fill=none, scale=1, anchor=south] (5) at (0.3,1.85) {$v_5$};
    \node[draw=none,fill=none, scale=1, anchor=south] (14) at (0.35,2.85) {$v_{14}$};
    \node[draw=none,fill=none, scale=1, anchor=south] (2) at (-1,0.2) {$v_2$};
    \node[draw=none,fill=none, scale=1, anchor=south] (6) at (-1.8,0.6) {$v_6$};
    \node[draw=none,fill=none, scale=1, anchor=south] (15) at (-2.3,1.2) {$v_{15}$};
    \node[draw=none,fill=none, scale=1, anchor=south] (23) at (-3.3,1.7) {$v_{23}$};
    \node[draw=none,fill=none, scale=1, anchor=south] (16) at (-2.8,0.3) {$v_{16}$};
    \node[draw=none,fill=none, scale=1, anchor=south] (24) at (-3.8,0.4) {$v_{24}$};
    \node[draw=none,fill=none, scale=1, anchor=south] (7) at (-1.8,-1.1) {$v_7$};
    \node[draw=none,fill=none, scale=1, anchor=south] (17) at (-2.8,-0.8) {$v_{17}$};
    \node[draw=none,fill=none, scale=1, anchor=south] (25) at (-3.8,-0.9) {$v_{25}$};
    \node[draw=none,fill=none, scale=1, anchor=south] (18) at (-2.3,-1.8) {$v_{18}$};
    \node[draw=none,fill=none, scale=1, anchor=south] (3) at (-0.4,-1.3) {$v_3$};
    \node[draw=none,fill=none, scale=1, anchor=south] (8) at (-0.8,-2) {$v_8$};
    \node[draw=none,fill=none, scale=1, anchor=south] (19) at (-1.2,-3) {$v_{19}$};
    \node[draw=none,fill=none, scale=1, anchor=south] (26) at (-1.9,-3.9) {$v_{26}$};
    \node[draw=none,fill=none, scale=1, anchor=south] (27) at (-1,-4.4) {$v_{27}$};
    \node[draw=none,fill=none, scale=1, anchor=south] (28) at (-0.1,-4.3) {$v_{28}$};
    \node[draw=none,fill=none, scale=1, anchor=south] (20) at (1.2,-3) {$v_{20}$};
    \node[draw=none,fill=none, scale=1, anchor=south] (29) at (1.5,-4) {$v_{29}$};
    \node[draw=none,fill=none, scale=1, anchor=south] (9) at (0.8,-2) {$v_9$};
    \node[draw=none,fill=none, scale=1, anchor=south] (4) at (1,-0.6) {$v_4$};
    \node[draw=none,fill=none, scale=1, anchor=south] (10) at (2.1,-1.2) {$v_{10}$};
    \node[draw=none,fill=none, scale=1, anchor=south] (11) at (2.4,-0.7) {$v_{11}$};
    \node[draw=none,fill=none, scale=1, anchor=south] (12) at (2.35,-0.25) {$v_{12}$};
    \node[draw=none,fill=none, scale=1, anchor=south] (21) at (3.4,0) {$v_{21}$};
    \node[draw=none,fill=none, scale=1, anchor=south] (13) at (2.1,0.3) {$v_{13}$};
    \node[draw=none,fill=none, scale=1, anchor=south] (22) at (3.1,0.7) {$v_{22}$};
	\begin{oodgraph}
		\addcycle[nodes prefix = a,color=red]{4};
		\addbeard[attach node = a->1]{4};
		\addbeard[attach node = a->2]{1};
		\addbeard[attach node = a->3]{2};
		\addbeard[attach node = a->4]{2};
		\addbeard[attach node = a->1->3]{1};
		\addbeard[attach node = a->1->4]{1};
		\addbeard[attach node = a->2->1]{1};
		\addbeard[attach node = a->3->1]{2};
		\addbeard[attach node = a->3->2]{2};
		\addbeard[attach node = a->4->1]{1};
		\addbeard[attach node = a->4->2]{1};
		\addbeard[attach node = a->3->1->1]{1};
		\addbeard[attach node = a->3->1->2]{1};
		\addbeard[attach node = a->3->2->1]{1};
		\addbeard[attach node = a->4->1->1]{3};
		\addbeard[attach node = a->4->2->1]{1};
	\end{oodgraph}
	\end{scope}
\end{tikzpicture}
   \caption{We consider the connected FG $\dds$ with a cycle of length 4 on the right. A possible index function is $\ind(y)=\nodedds_{y+1}$. According to $\dds$ and $\ind(y)$, the corresponding t-abstraction $\matrixt{\dds}{}$ and the matrix $\tpmatrix{\dds}{}$ of $\dds$ are shown on the left. In both cases, $\indexp$ runs through rows and $\tra$ through columns. Note that $|\matrixt{\dds}{\indexp,\tra}|$ is always equal to $|\tpmatrix{\dds}{\indexp,\tra-1}|$ since each element of $\matrixt{\dds}{\indexp,\tra}$ represents the number of predecessors of a node in $\tpmatrix{\dds}{\indexp,\tra-1}$.}
    \label{fig:tabs}
\end{figure}

For each component $j$, we now introduce a new matrix $\matrixt{\dds_j}{}$, with $|\cycle_j|$ rows and $\tra^{\max}_j+1$ columns, to model in a new way the transient part of the graph.
Each element of this matrix is a multiset $\matrixt{\dds_j}{\indexp,\tra}$, with $\indexp\in\set{0,\ldots,|\cycle_j|-1}$ and $\tra\in\set{1,\ldots,\tra^{\max}_j+1}$, containing the number of predecessors (\ie, the incoming degree) of each node in $\tpmatrix{\dds_j}{\indexp,\tra-1}$ (\ie, the number of predecessors of each node in the layer $\tra-1$ for the cyclic vertex $\ind(\indexp)$). 
We call \emph{predecessors} of a node $\nodedds$ all those nodes for which there is an outgoing edge towards $\nodedds$.
Then, $\matrixt{\dds_j}{\indexp,\tra}$ is a multiset since there can be two nodes at the same level $\tra-1$ for the same cyclic vertex $\ind(\indexp)$ with an equivalent number of predecessors.
We denote a multiset using square brackets. For example, $\multiset{2,3,3,4}$ denotes the multiset containing symbols $2$ and $4$ with multiplicity $1$, while the symbol $3$ has multiplicity $2$. 

Let us point out that, for $h=1$, we analyse the number of predecessors of the cyclic node $\ind(\indexp)$. Then, we include its predecessor in the cycle $\cycle_j$. 
More formally, for a component $j$, we have
\[
\matrixt{\dds_j}{\indexp,\tra}=\begin{cases}\multiset{|f^{-1}(\ind(\indexp))|}& \text{if $h=1$}\\
\multiset{|f^{-1}(\nodedds)| : \nodedds\in\tpmatrix{\dds_j}{\indexp,\tra-1}} & \text{if $\tra>1.$}\end{cases}
\]


Finally, the \textbf{t-abstraction} of a functional graph $\dds = \dds_1 + \cdots + \dds_{\ncmp}$ (with $\dds_1, \ldots, \dds_{\ncmp}$ connected components) is the multiset $\tabs{\dds}=\multiset{\matrixt{\dds_1}{},\ldots,\matrixt{\dds_\ncmp}{}}$.
In other words, the t-abstraction of a generic FG is the multiset containing the t-abstraction of each of its components. It is imperative to consider multisets, since an FG may contain components that are isomorphic to each other (\ie, with the same t-abstraction). 

Figure \ref{fig:tabs} illustrates an example to clarify all the notations just introduced.

At this point, it must be emphasised that two non-isomorphic connected components may have the same t-abstraction, as illustrated in Figure \ref{fig:2dds_1abs} for example.

 \begin{figure}[ht!]
    \centering
    \begin{tikzpicture}

    \node (0) at (0,0) {
    $\begin{blockarray}{ccccc}
      &            1 &                2 &                3 &            4 \\
      \begin{block}{c(cccc)}
    0 & \multiset{3} &   \multiset{2,1} & \multiset{0,0,1} & \multiset{0} \\
    1 & \multiset{4} & \multiset{0,0,2} &   \multiset{0,0} &    \emptyset \\
    \end{block}
    \end{blockarray}$
    };
    
    \begin{oodgraph}
        \addcycle[xshift=-3.5cm,yshift=2cm,nodes prefix = a]{2};
		\addbeard[attach node = a->1]{2};
		\addbeard[attach node = a->1->1]{1};
		\addbeard[attach node = a->1->2]{2};
		\addbeard[attach node = a->1->2->1]{1};
		\addbeard[attach node = a->2]{3};
		\addbeard[attach node = a->2->2]{2};
		
		\addcycle[xshift=3.5cm,yshift=2cm,nodes prefix = b]{2};
		\addbeard[attach node = b->1]{2};
		\addbeard[attach node = b->1->1]{1};
		\addbeard[attach node = b->1->2]{2};
		\addbeard[attach node = b->1->1->1]{1};
		\addbeard[attach node = b->2]{3};
		\addbeard[attach node = b->2->2]{2};
    \end{oodgraph}
    
    \end{tikzpicture}
    \caption{Two nonisomorphic graphs (top) having the same t-abstractions (bottom).}
    \label{fig:2dds_1abs}
 \end{figure}
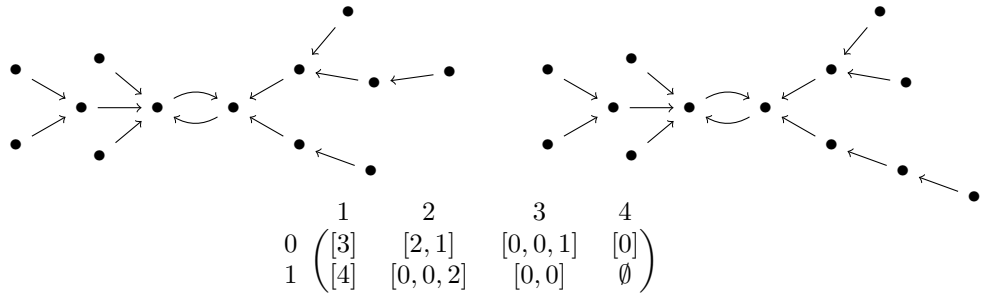




\subsubsection{Products of t-abstractions}

To solve instances of Equation \eqref{DDSineq}, we consider the corresponding equation over t-abstractions $\matrixt{A}{} \times \matrixt{X}{}$ $\supseteq$ $\matrixt{B}{}$, where $\matrixt{A}{}$, $\matrixt{X}{}$ and $\matrixt{B}{}$ are the t-abstractions of respectively $A$, $X$ and $B$ since they are connected.
Here, we study how the product of the t-abstractions of two connected FGs can be computed to obtain the t-abstraction of the product between the original FGs. 
We consider connected FGs because, in the case of a product operation between two generic functional graphs $\tabs{\dds}=\multiset{\matrixt{\dds_1}{},\ldots,\matrixt{\dds_{\ncmp_\dds}}{}}$ and $\tabs{\ddsps}=\multiset{\matrixt{\ddsps_1}{},\ldots,\matrixt{\ddsps_{\ncmp_\ddsps}}{}}$, the result can in fact be computed as $\sum_{z=1}^{\ncmp_\dds} \sum_{z'=1}^{\ncmp_\ddsps} \matrixt{\dds_z}{} \times \matrixt{{\ddsps_{z'}}}{}$.
We denote the number of components of $\dds$ and $\ddsps$ by $\ncmp_\dds$ and $\ncmp_\ddsps$ respectively.

For two multisets of natural numbers $M=\multiset{d_1,\ldots,d_n}$ and $M'=\multiset{d'_1,\ldots,d'_{m}}$, we denote their disjoint union as $M + M'$ and  we define the product $M \otimes M'=\multiset{d \dot d' \mid d\in M, d'\in M'}$. 

\begin{proposition}\label{propprodgeneral}
    Consider two connected FGs $\dds$ and $\ddsps$ with their respective t-abstractions $\tabs{\dds}= \multiset{\matrixt{\dds}{}}$ (with $|\cycle_\dds|$ rows and $\tra^{\max}_\dds$ columns) and $\tabs{\ddsps}= \multiset{\matrixt{\ddsps}{}}$ (with $|\cycle_\ddsps|$ rows and $\tra^{\max}_\ddsps$ columns). Then, the t-abstraction $\tabs{\ddspps}$ of the product $\ddspps = \dds \times \ddsps$ is a multiset of matrices
    $[\matrixt{\ddspps_1}{}, \ldots, \matrixt{\ddspps_{\ncmp_\ddspps}}{}]$ with $\ncmp_\ddspps=\gcd(|\cycle_\dds|,|\cycle_\ddsps|)$.
    Each matrix $\matrixt{\ddspps_i}{}$ (with $i\in\set{1,\ldots,\ncmp_\ddspps}$) has $\lcm(|\cycle_\dds|,|\cycle_\ddsps|)$ rows and $\max\set{\tra^{\max}_\dds,\tra^{\max}_\ddsps}$ columns,
    and each element is computed from $\tabs{\dds}$ and $\tabs{\ddsps}$ as  
	\begin{equation} \label{prodnotconnected}
	    \matrixt{\ddspps_i}{r,h}=\matrixt{\dds}{r,h}\otimes\left(\sum_{j=0}^{h-1} \matrixt{\ddsps}{r-j+(i-1),h-j}\right) + \matrixt{\ddsps}{r+(i-1),h}\otimes\left(\sum_{j=1}^{h-1} \matrixt{\dds}{r-j,h-j}\right) 
	\end{equation}
	where the row indices $r$ and $r-j$ (respectively $r-j+(i-1)$ and $r+(i-1)$) are interpreted modulo $|\cycle_\dds|$ (respectively $|\cycle_\ddsps|$).
\end{proposition}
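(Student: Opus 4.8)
The plan is to analyse the transient trees of $\ddspps=\dds\times\ddsps$ directly and read off their in-degree multisets, matching them term by term with the two summands of Equation~\eqref{prodnotconnected}. Write $p=|\cycle_\dds|$ and $q=|\cycle_\ddsps|$. The component count $\gcd(p,q)$ and the cycle length $\lcm(p,q)$ are already given by the product formula recalled above, so the first task is to fix a coherent index function for each component $\ddspps_i$. Since the cyclic dynamics sends $(\ind_\dds(a),\ind_\ddsps(b))$ to $(\ind_\dds(a+1),\ind_\ddsps(b+1))$, two cyclic nodes $(\ind_\dds(a),\ind_\ddsps(b))$ and $(\ind_\dds(a'),\ind_\ddsps(b'))$ lie in the same cycle exactly when $b-a\equiv b'-a'\pmod{\gcd(p,q)}$. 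This lets me label the $i$-th component by the offset $i-1$ and set $\ind_{\ddspps_i}(r)=(\ind_\dds(r),\ind_\ddsps(r+(i-1)))$, with the first index read modulo $p$ and the second modulo $q$. That single choice already explains the shifts $r$ versus $r+(i-1)$ running through Equation~\eqref{prodnotconnected}.

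Next I record two elementary facts about the direct product. First, predecessors factor: $(f\times f')^{-1}(v,v')=f^{-1}(v)\times f'^{-1}(v')$, so the in-degree of a product node is the product of the in-degrees of its coordinates; this is exactly what makes $\otimes$ the right operation, and it settles the base case $h=1$, where Equation~\eqref{prodnotconnected} collapses to $\matrixt{\dds}{r,1}\otimes\matrixt{\ddsps}{r+(i-1),1}=\multiset{|f^{-1}(\ind_\dds(r))|\cdot|f'^{-1}(\ind_\ddsps(r+(i-1)))|}$, the in-degree of the cyclic node $\ind_{\ddspps_i}(r)$. Second, depths combine as a maximum: $(v,v')$ first becomes cyclic after $\max(\mathrm{depth}(v),\mathrm{depth}(v'))$ steps, and if $v\in\tpmatrix{\dds}{s,d}$ then $f^{h-1}(v)=\ind_\dds(s+(h-1)-d)$ for $h-1\ge d$ (indices modulo $p$). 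Consequently a transient node of $\ddspps_i$ at layer $h-1$ over $\ind_{\ddspps_i}(r)$ is precisely a pair $(v,v')$ with $\max(\mathrm{depth}(v),\mathrm{depth}(v'))=h-1$ whose coordinates flow to $\ind_\dds(r)$ and $\ind_\ddsps(r+(i-1))$; solving these landing conditions pins each coordinate into a specific cell $\tpmatrix{\dds}{\cdot,\cdot}$ (resp.\ $\tpmatrix{\ddsps}{\cdot,\cdot}$), or onto a single cyclic node when its depth is $0$. The same maximum observation shows the largest transient depth of each $\ddspps_i$ is $\max\set{\tra^{\max}_\dds,\tra^{\max}_\ddsps}$, fixing the number of columns.

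The heart of the argument is to partition $\tpmatrix{\ddspps_i}{r,h-1}$ by which coordinate realises the maximal depth: subcase (i), where $\mathrm{depth}(v)=h-1$ and $\mathrm{depth}(v')$ ranges over $0,\ldots,h-1$, and subcase (ii), where $\mathrm{depth}(v')=h-1$ and $\mathrm{depth}(v)$ ranges over $0,\ldots,h-2$; the strict inequality keeps the cases disjoint, counting the diagonal $\mathrm{depth}(v)=\mathrm{depth}(v')=h-1$ once, in subcase (i). For a fixed pair of coordinate depths the two coordinates vary independently over the Cartesian product of one $\tpmatrix{\dds}{}$ cell and one $\tpmatrix{\ddsps}{}$ cell (or a singleton cyclic node), so by the factorisation of in-degrees the contributed multiset of product in-degrees is exactly the $\otimes$ of the two coordinate in-degree multisets, i.e.\ of the matching $\matrixt{\dds}{}$ and $\matrixt{\ddsps}{}$ entries. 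Summing subcase (i) over $\mathrm{depth}(v')$ and reindexing by $j=(h-1)-\mathrm{depth}(v')$ yields $\matrixt{\dds}{r,h}\otimes\sum_{j=0}^{h-1}\matrixt{\ddsps}{r-j+(i-1),h-j}$, the first summand; summing subcase (ii) over $\mathrm{depth}(v)$ and reindexing by $j=(h-1)-\mathrm{depth}(v)$ yields $\matrixt{\ddsps}{r+(i-1),h}\otimes\sum_{j=1}^{h-1}\matrixt{\dds}{r-j,h-j}$, the second summand. The depth-$0$ endpoints, sitting at $j=h-1$ in each range, are exactly what the convention $\matrixt{\cdot}{s,1}=\multiset{|f^{-1}(\ind(s))|}$ encodes, so they need no special treatment.

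The main obstacle I expect is the index bookkeeping: verifying that the landing conditions $f^{h-1}(v)=\ind_\dds(r)$ and $f'^{h-1}(v')=\ind_\ddsps(r+(i-1))$ translate, under $j=(h-1)-\mathrm{depth}$, into precisely the modular shifts $r-j$ and $r-j+(i-1)$ written in Equation~\eqref{prodnotconnected}, and that the two subcases are genuinely disjoint and exhaustive across the diagonal. Once the indexing is fixed, the remaining work is the routine identity that a Cartesian product of node sets produces the $\otimes$ of their in-degree multisets, together with the disjoint-union accounting that assembles the per-depth contributions into the two $\otimes$-summands.
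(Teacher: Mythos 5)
Your proof is correct and follows essentially the same route as the paper's: both establish the gcd/lcm component and cycle structure and the $\max$ rule for transient depths, then classify transient nodes of the product by which coordinate attains the maximal depth, use the factorisation of predecessor sets to justify $\otimes$, and follow the diagonal of cells down to the cyclic endpoint handled by the $h=1$ convention. The only cosmetic differences are that the paper counts the diagonal case (both coordinates at maximal depth) as a separate third term later absorbed into the $j=0$ index, where you avoid double-counting via a strict inequality in your second subcase, and your explicit offset-modulo-$\gcd$ labelling of the components makes the $i-1$ shifts slightly more transparent than the paper's derivation.
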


\begin{proof}
In~\cite{dennunzio2019solving} it has been proven
that the product operation between two connected FGs gives $\gcd(|\cycle_\dds|,|\cycle_\ddsps|)$ components with cycle length $\lcm(|\cycle_\dds|,|\cycle_\ddsps|)$.
At this point, we need to prove that the maximum transient length of the resulting components is $\max\set{\tra^{\max}_\dds,\tra^{\max}_\ddsps}$. 

Let us consider $\dds=\structure{\setst_\dds,E_\dds}$ and $\ddsps=\structure{\setst_\ddsps,E_\ddsps}$ with just one component each and with corresponding functions $f_\dds$ and $f_\ddsps$. According to Section~\ref{sec:FG}, we know that $f_\dds^{\tra^{\max}_\dds}(\nodedds)\in\pp_\dds$, for all $\nodedds\in\setst_\dds$, otherwise, with a smaller number of applications of $f_\dds$, the resulting node can be a transient node. Let us now suppose $\tra^{\max}_\dds\geq \tra^{\max}_\ddsps$. In this scenario, for all $h\geq\tra^{\max}_\ddsps$, we have $f_\ddsps^{h}(u)\in\pp_\ddsps$ for all $u\in\setst_\ddsps$.
Then, a node $(\nodedds_1,\nodedds_2)$ in the result of the product operation (with $\nodedds_1\in\tpmatrix{\dds}{\indexp_1,\tra^{\max}_\dds}$ for some $\indexp_1\in\set{0,\ldots,|\cycle_\dds|-1}$) is a transient node that requires $\tra^{\max}_\dds$ applications of $f_\ddspps = f_\dds \times f_\ddsps$ to reach a cyclic node. Moreover, for all $\nodedds_2\in\setst_\ddsps$, we have $(f_\dds \times f_\ddsps)^{\tra^{\max}_\dds}(\nodedds_1,\nodedds_2)\in\pp_{\ddspps}$ and $(f_\dds \times f_\ddsps)^{h}(\nodedds_1,\nodedds_2)\not\in\pp_{\ddspps}$ with $h<\tra^{\max}_\dds$. For this reason, $\tra^{\max}_{\ddspps}=\tra^{\max}_\dds$. We remark that, since the statement is true for all $\nodedds_2$, all components $\tpmatrix{\ddspps_i}{}$ will have maximal transient length $\tra^{\max}_\dds$.
Symmetrically, if we consider $\tra^{\max}_\dds\leq \tra^{\max}_\ddsps$, one obtains all $\tpmatrix{\ddspps_i}{}$ having $\tra^{\max}_\ddsps$ columns, for all $1\leq i\leq \gcd(|\cycle_\dds|,|\cycle_\ddsps|)$.

Let $\ind_\dds$, $\ind_\ddsps$ and $\ind_\ddspps$ be the index functions of the three graphs. By considering a node $(\nodedds_1,\nodedds_2)\in\setst_\dds \times \setst_\ddsps$, if $(\nodedds_1,\nodedds_2)\in\tpmatrix{\ddspps_i}{\indexp,h}$ and $(f_\dds \times f_\ddsps)^h(\nodedds_1,\nodedds_2)=(p_1,p_2)$, we know that $\ind_\ddspps(\indexp)=(p_1,p_2)$ and at least one of the following conditions holds:
\begin{enumerate}[(i)]
    \item $\nodedds_1\in\tpmatrix{\dds}{\indexp_1,h}$ with $\ind_\dds(\indexp_1)=p_1$;
    \item $\nodedds_2\in\tpmatrix{\ddsps}{\indexp_2,h}$ with $\ind_\ddsps(\indexp_2)=p_2$.
\end{enumerate} 
First, let us consider $\nodedds_1\in\tpmatrix{\dds}{\indexp_1,h}$ with $\ind_\dds(\indexp_1)=p_1$. Then, if we are interested in the elements of $\matrixt{\ddspps_i}{r,h}$ including $\nodedds_1$, we need to consider all feasible values of $\nodedds_2$. In order for $(\nodedds_1,\nodedds_2)$ to belong to $\tpmatrix{\ddspps_i}{r,h}$, node $\nodedds_2$ must satisfy $f_\ddsps^h(\nodedds_2)=p_2$. Therefore, $\nodedds_2$ must necessarily belong to one of $\tpmatrix{\ddsps}{r_2,h}$, $\tpmatrix{\ddsps}{r_2-1,h-1}$, $\tpmatrix{\ddsps}{r_2-2,h-2}$, $\ldots$, $\tpmatrix{\ddsps}{r_2-(h-1),1}$ or be the cyclic vertex $\ind_\ddsps(r_2-h)$.
%
The reasoning is similar for the case when $\nodedds_2\in\tpmatrix{\ddsps}{\indexp_2,h}$. Since either $\nodedds_1\in\tpmatrix{\dds}{\indexp_1,h}$, or $\nodedds_2\in\tpmatrix{\ddsps}{\indexp_2,h}$, or both, the predecessors (in terms of multiplicity) can be computed, for the component $i$ containing $(p_1,p_2)$, by
\[\matrixt{\ddspps_i}{r,h}=\matrixt{\dds}{r,h} \otimes
\left( \sum_{j=1}^{h-1} \matrixt{\ddsps}{r-j,h-j} \right) +
\matrixt{\ddsps}{r,h} \otimes
\left( \sum_{j=1}^{h-1} \matrixt{\dds}{r-j,h-j} \right) +
\matrixt{\dds}{r,h} \otimes \matrixt{\ddsps}{r,h}. \]
Let us suppose, once again, that $\nodedds_1\in\tpmatrix{\dds}{\indexp_1,h}$. In order to generate all components, we must consider the fact that there might exist a node $\nodedds_2'$ such that $(f_\dds \times f_\ddsps)^h(\nodedds_1,\nodedds_2')=(p_1,f_\ddsps(p_2))$ with $(f_\dds \times f_\ddsps)^{h'}(p_1,p_2)\neq(p_1,f_\ddsps(p_2))$ for all $h'\in\N$; this means that $(\nodedds_1,\nodedds_2)$ and $(\nodedds_1,\nodedds_2')$ belong to different components of $\ddspps$. In order for $(\nodedds_1,\nodedds_2')$ to belong to $\tpmatrix{\ddspps_i}{r,h}$, the node $\nodedds_2'$ must satisfy $f_\ddsps^h(\nodedds_2')=f_\ddsps(p_2)$ with $\ind_\ddsps(r_2)=p_2$. Then, $\nodedds_2'$ must necessarily belong to one of $\tpmatrix{\ddsps}{(r_2+1),h}$, $\tpmatrix{\ddsps}{(r_2+1)-1,h-1}$, $\tpmatrix{\ddsps}{(r_2+1)-2,h-2}$, $\ldots$, $\tpmatrix{\ddsps}{(r_2+1)-(h-1),1}$ or be the cyclic node $g_\ddsps((r_2 + 1)-h)$.

In general, we can consider a node $u$ such that $(f_\dds \times f_\ddsps)^h(\nodedds_1,u)=(p_1,f_\ddsps^{i}(p_2))$ with $1\leq i\leq \gcd(|\cycle_\dds|,|\cycle_\ddsps|)$ and $(f_\dds \times f_\ddsps)^{h'}(p_1,p_2)\neq(p_1,f_\ddsps^i(p_2))$ for all $h'\in\N$. If $(\nodedds_1,u)\in\matrixt{\ddspps_i}{r,h}$, $u$ must be a node such that $f_\ddsps^h(u)=f_\ddsps^i(p_2)$ with $\ind_\ddsps(r_2)=p_2$. Then, $u$ must necessarily belong to one of $\tpmatrix{\ddsps}{r_2+(i-1),h}$, $\tpmatrix{\ddsps}{r_2+(i-1)-1,h-1}$, $\tpmatrix{\ddsps}{r_2+(i-1)-2,h-2}$, \ldots, $\tpmatrix{\ddsps}{r_2+(i-1)-(h-1),1}$ or be the cyclic node $g_\ddsps(r_2+(i-1)-h)$. As a consequence, we can compute each element of the t-abstraction of an arbitrary component $i$, with $1\leq i\leq \gcd(|\cycle_\dds|,|\cycle_\ddsps|)$, as
\[\matrixt{\ddspps_i}{r,h}=
\matrixt{\dds}{r,h} \otimes
\left(\sum_{j=1}^{h-1} \matrixt{\ddsps}{r-j+(i-1),h-j}\right) +
\matrixt{\ddsps}{r+(i-1),h} \otimes 
\left(\sum_{j=1}^{h-1} \matrixt{\dds}{r-j,h-j} \right) +
\matrixt{\dds}{r,h} \otimes \matrixt{\ddsps}{r+(i-1),h}.\]
Since $\ind_\ddsps(r-j+(i-1))$ and $\ind_\ddsps(r+(i-1))$ represent cyclic nodes of $\ddsps$, and thus exhibit a cyclic behaviour, they must be interpreted modulo $|\cycle_\ddsps|$, and similarly $\ind_\dds(r)$ and $\ind_\dds(r-j)$ modulo $|\cycle_\dds|$.
\end{proof}

\begin{figure}
\centerline{\includegraphics[width=0.65\textwidth]{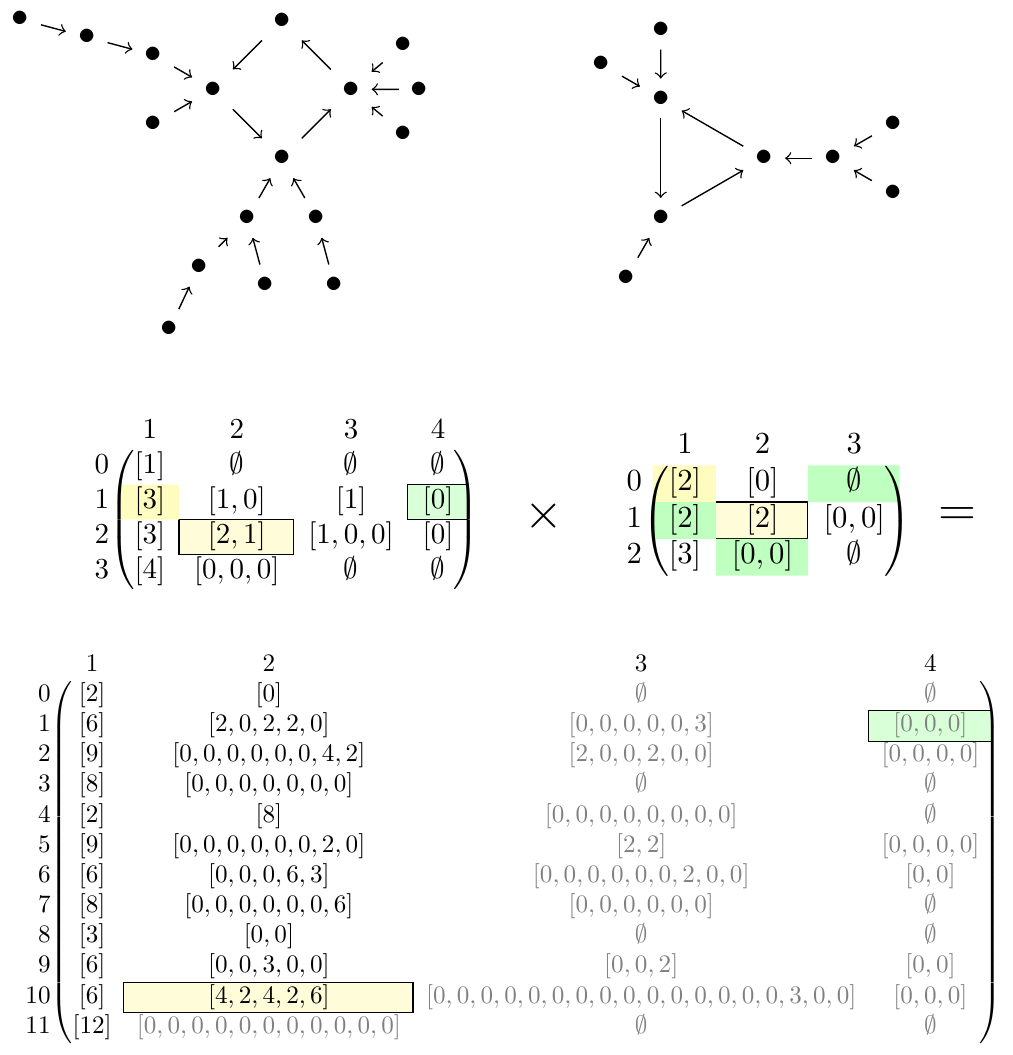}} 
    \caption{An example of product operation between the t-abstractions $\matrixt{\dds}{}$ and $\matrixt{\ddsps}{}$. Above there are the two t-abstractions and the corresponding connected FGs. Below there is the resulting t-abstraction $\matrixt{\ddspps}{}$ computed according to Proposition \ref{propprodgeneral}. Let us suppose that we have already computed all $\matrixt{\ddspps}{\indexp,\tra}$ with $\indexp\in\set{0,\ldots,11}$ and $h=1$, and also all $\matrixt{\ddspps}{\indexp,\tra}$ with $\indexp\in\set{0,\ldots,9}$ and $h=2$. Then, we need to compute $\matrixt{\ddspps}{10,2}$ (in yellow). According to the proposition, $\matrixt{\ddspps}{10,2}=\matrixt{\dds}{2,2}\otimes(\matrixt{\ddsps}{1,2}+\matrixt{\ddsps}{0,1})+\matrixt{\ddsps}{1,2}\otimes\matrixt{\dds}{1,1}=\multiset{2,1}\otimes(\multiset{2,2})+\multiset{2}\otimes\multiset{3}=\multiset{4,2,4,2,6}$. Intuitively, we should follow the diagonal of elements in the two original matrices until we reach the first column (as shown in yellow). Note that if we need to calculate an element $\matrixt{\ddspps}{\indexp,\tra}$ such that one of the two matrices does not have a column $\tra$, we still consider the elements on the diagonal (with $\tra'<\tra$). An example is $\matrixt{\ddspps}{1,4}=\matrixt{\dds}{1,4}\otimes(\matrixt{\ddsps}{0,3}+\matrixt{\ddsps}{2,2}+\matrixt{\ddsps}{1,1})$ (in green in the matrices).
     }
    \label{fig:tprod}
\end{figure}

\subsection{The cancellation problem over transients}\label{cancellation}

In this section, we aim to introduce a first upper bound to the number of solutions of a basic inequality $A \times X \supseteq B$ over connected FGs. We will represent such digraphs with infinite anti-arborescences (or in-trees). An in-tree is a directed rooted tree where the edges are directed towards the root. We call this in-tree the \emph{unroll} of an FG.

\begin{definition}[Unroll of a FG]\label{def:unroll}
Given a connected FG $\dds = \structure{\setst,E}$ representing the function $f$ and a cyclic node $\nodedds$ of $\dds$, the \textbf{unroll} of $\dds$ from $\nodedds$ is an infinite in-tree $\unroll{\dds}{\nodedds}=\structure{\vunroll,\eunroll}$ with vertices $\vunroll=\bigcup_{i\in\N} \vunroll_i$, where $\vunroll_0=\{(\nodedds,0)\}$ and $\vunroll_{i+1}=\{(u,i+1) \mid u \in f^{-1}(v) \text{ for some } (v,i) \in \vunroll_i \}$, and edges between vertices $(u,i+1)$ and $(f(u),i)$ on two consecutive levels $i+1$ and $i$ (with $i\in\N$).
\end{definition}

\begin{figure}
    \begin{center}
\begin{tikzpicture}
\begin{scope}[scale=1]
    \node[draw=none,fill=none, scale=1, anchor=south] (1) at (-1.3,1.15) {$v_1$};
    \node[draw=none,fill=none, scale=1, anchor=south] (2) at (-2.3,1.15) {$v_2$};
    \node[draw=none,fill=none, scale=1, anchor=south] (3) at (-.3,1.15) {$v_3$};
    \node[draw=none,fill=none, scale=1, anchor=south] (4) at (-3.2,1.55) {$v_4$};
    \node[draw=none,fill=none, scale=1, anchor=south] (5) at (-3.2,0.55) {$v_5$};
    \node[draw=none,fill=none, scale=1, anchor=south] (6) at (-4.2,0.77) {$v_6$};
    \node[draw=none,fill=none, scale=1, anchor=south] (7) at (-3.85,-0.75) {$v_7$};
    \begin{oodgraph}
		\addcycle[xshift=-1.8cm, yshift=1cm, nodes prefix = a,color=red]{2};
		\addbeard[attach node = a->1]{1};
		\addbeard[attach node = a->2]{2};
		\addbeard[attach node = a->2->2]{2};
	\end{oodgraph}
	\end{scope}
 \begin{scope}[scale=1.3]
	\node[draw=none,fill=none] (u1) at (5,-1) {$(v_1,0)$};
	\node[draw=none,fill=none] (u2) at (4,-0.2) {$(v_2,1)$};
	\node[draw=none,fill=none] (u2_2) at (6,-0.2) {$(v_3,1)$};
	\node[draw=none,fill=none] (u3) at (3,0.6) {$(v_1,2)$};
	\node[draw=none,fill=none] (u3_2) at (4,0.6) {$(v_4,2)$};
	\node[draw=none,fill=none] (u3_3) at (5,0.6) {$(v_5,2)$};
	\node[draw=none,fill=none] (u4) at (2,1.4) {$(v_2,3)$};
	\node[draw=none,fill=none] (u4_2) at (3,1.4) {$(v_3,3)$};
	\node[draw=none,fill=none] (u4_3) at (4.2,1.4) {$(v_6,3)$};
	\node[draw=none,fill=none] (u4_4) at (5.8,1.4) {$(v_7,3)$};
	\node[draw=none,fill=none, scale=1] (u5) at (1,2.2) {};
	\draw[->,line width=0.3mm] (u2) to (u1);
	\draw[->,line width=0.3mm] (u2_2) to (u1);
	\draw[->,line width=0.3mm] (u3) to (u2);
	\draw[->,line width=0.3mm] (u3_2) to (u2);
	\draw[->,line width=0.3mm] (u3_3) to (u2);
	\draw[->,line width=0.3mm] (u4) to (u3);
	\draw[->,line width=0.3mm] (u4_2) to (u3);
	\draw[->,line width=0.3mm] (u4_3) to (u3_3);
	\draw[->,line width=0.3mm] (u4_4) to (u3_3);
	\draw[->,dotted,line width=0.3mm] (u5) to (u4);
 \end{scope}
\end{tikzpicture}
\end{center}
    \caption{The unroll $\unroll{\dds}{v_1}$ (right) of the connected FG $\dds$ (left). The root $(v_1,0)$ of the infinite in-tree is in the bottom layer of the structure. Intuitively, the unroll shows all the paths which
    eventually reach the root.}
    \label{fig:ex_unroll}
\end{figure}
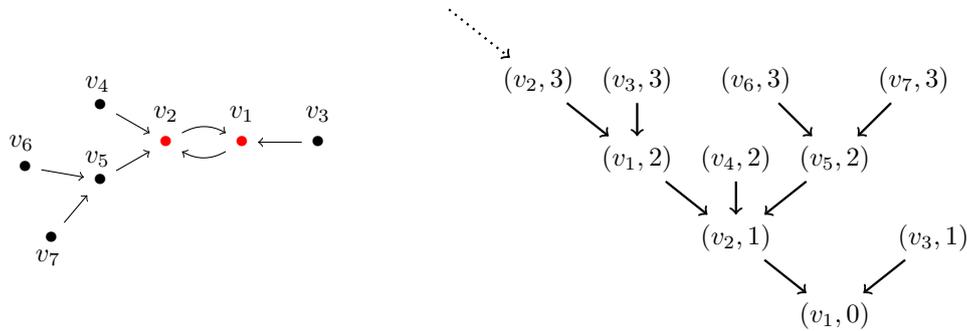

We remark that, starting from a generic connected FG (with just one cycle of length $p$), we can introduce $p$ different unrolls, one for each node $\nodedds\in\pp$.

Let us point out that a similar (but not equivalent) definition of unroll has been independently introduced in \cite{naquin2022factorisation}.

\medskip

Let us introduce a product of (finite or infinite) in-trees to be applied over unrolls to obtain the in-tree modelling the result of a direct product over FGs. Intuitively, this product is the direct product applied layer by layer.

\begin{definition}[Product of in-trees]\label{prodintrees}

Consider two infinite or finite in-trees $\itree_1=\structure{\vunroll_1,\eunroll_1}$ and $\itree_2=\structure{\vunroll_2,\eunroll_2}$ with roots $r_1$ and $r_2$, respectively. The \textbf{product of in-trees} $\itree_1 \unrollp \itree_2$ is the in-tree $\structure{\vunroll,\eunroll}$ such that  $(r_1,r_2)\in\vunroll$ and, for all $(v,u)\in \vunroll$, if there exist $v'\in \vunroll_1$ and $u'\in \vunroll_2$ such that $(v',v)\in\eunroll_1$ and $(u',u)\in\eunroll_2$, then $(v',u')\in\vunroll$. The set of edges is defined as $\eunroll=\set{\structure{\structure{v',u'},\structure{v,u}} \mid \structure{v',v}\in\eunroll_1 \land \structure{u',u}\in\eunroll_2}$.
Notice that $\itree_1 \unrollp \itree_2$ is an infinite in-tree iff $\itree_1$ and $\itree_2$ are infinite in-trees.
\end{definition}

We remark that unrolls of FGs always have one and only one infinite path starting from the root.

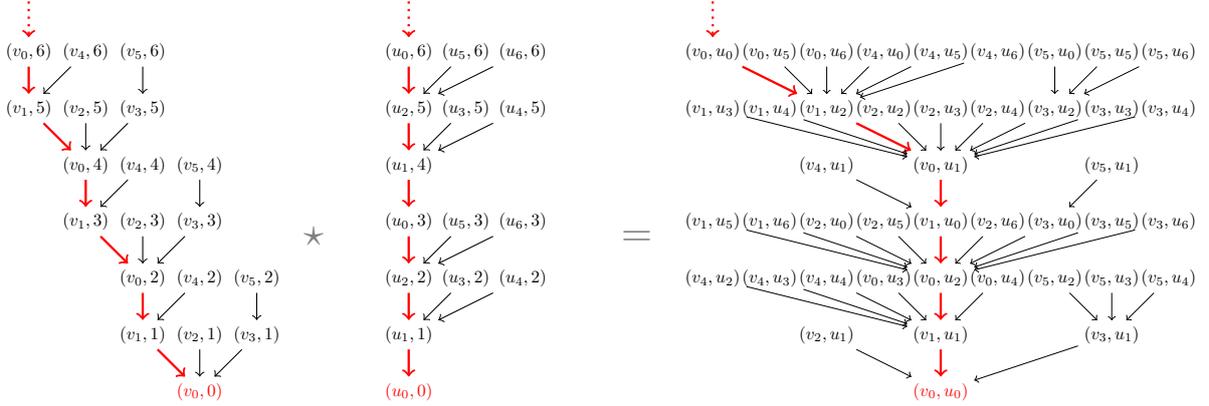
\begin{figure}
    \begin{center}
    
\begin{tikzpicture}
\node[draw=none,fill=none, scale=1.5, anchor=south] (per) at (3,1){\textcolor{gray}{$\unrollp$}};
    \node[draw=none,fill=none, scale=1.5, anchor=south] (eq) at (7.25,1){\textcolor{gray}{$=$}};
\tikzset{
    every node/.style={scale=0.65},
}
\begin{scope}[scale=0.75]
	\node[draw=none,fill=none] (0) at (2,-1) {\textbf{\textcolor{red}{$(v_0,0)$}}};
	\node[draw=none,fill=none] (2) at (2,0) {$(v_2,1)$};
	\node[draw=none,fill=none] (1) at (1,0) {$(v_1,1)$};
	\node[draw=none,fill=none] (3) at (3,0) {$(v_3,1)$};
	\node[draw=none,fill=none] (4) at (1,1) {$(v_0,2)$};
	\node[draw=none,fill=none] (7) at (3,1) {$(v_5,2)$};
	\node[draw=none,fill=none] (5) at (2,1) {$(v_4,2)$};
	\node[draw=none,fill=none] (8) at (0,2) {$(v_1,3)$};
	\node[draw=none,fill=none] (9) at (1,2) {$(v_2,3)$};
	\node[draw=none,fill=none] (10) at (2,2) {$(v_3,3)$};
	\node[draw=none,fill=none] (11) at (0,3) {$(v_0,4)$};
	\node[draw=none,fill=none] (14) at (1,3) {$(v_4,4)$};
	\node[draw=none,fill=none] (16) at (2,3) {$(v_5,4)$};
	\node[draw=none,fill=none] (12) at (-1,4) {$(v_1,5)$};
	\node[draw=none,fill=none] (17) at (0,4) {$(v_2,5)$};
	\node[draw=none,fill=none] (18) at (1,4) {$(v_3,5)$};
	\node[draw=none,fill=none] (13) at (-1,5) {$(v_0,6)$};
	\node[draw=none,fill=none] (19) at (0,5) {$(v_4,6)$};
	\node[draw=none,fill=none] (21) at (1,5) {$(v_5,6)$};
	\node[draw=none,fill=none] (dot) at (-1,6) {};
	\draw[->,color=red,line width=0.3mm] (1) to (0);
	\draw[->] (2) to (0);
	\draw[->] (3) to (0);
	\draw[->,color=red,line width=0.3mm] (4) to (1);
	\draw[->] (5) to (1);
	\draw[->] (7) to (3);
	\draw[->,color=red,line width=0.3mm] (8) to (4);
	\draw[->] (9) to (4);
	\draw[->] (10) to (4);
	\draw[->,color=red,line width=0.3mm] (11) to (8);
	\draw[->] (14) to (8);
	\draw[->,color=red,line width=0.3mm] (12) to (11);
	\draw[->,color=red,line width=0.3mm] (13) to (12);
	\draw[->,dotted,color=red,line width=0.3mm] (dot) to (13);
	\draw[->] (16) to (10);
	\draw[->] (17) to (11);
	\draw[->] (18) to (11);
	\draw[->] (19) to (12);
	\draw[->] (21) to (18);
\end{scope}
\begin{scope}[xshift=5cm,scale=0.75]
	\node[draw=none,fill=none] (0) at (-1,-1) {\textbf{\textcolor{red}{$(u_0,0)$}}};
	\node[draw=none,fill=none] (1) at (-1,0) {$(u_1,1)$};
	\node[draw=none,fill=none] (4) at (-1,1) {$(u_2,2)$};
	\node[draw=none,fill=none] (5) at (0,1) {$(u_3,2)$};
	\node[draw=none,fill=none] (6) at (1,1) {$(u_4,2)$};
	\node[draw=none,fill=none] (8) at (-1,2) {$(u_0,3)$};
	\node[draw=none,fill=none] (9) at (0,2) {$(u_5,3)$};
	\node[draw=none,fill=none] (10) at (1,2) {$(u_6,3)$};
	\node[draw=none,fill=none] (11) at (-1,3) {$(u_1,4)$};
	\node[draw=none,fill=none] (12) at (-1,4) {$(u_2,5)$};
	\node[draw=none,fill=none] (17) at (0,4) {$(u_3,5)$};
	\node[draw=none,fill=none] (18) at (1,4) {$(u_4,5)$};
	\node[draw=none,fill=none] (13) at (-1,5) {$(u_0,6)$};
	\node[draw=none,fill=none] (19) at (0,5) {$(u_5,6)$};
	\node[draw=none,fill=none] (20) at (1,5) {$(u_6,6)$};
	\node[draw=none,fill=none] (dot) at (-1,6) {};
	\draw[->,color=red,line width=0.3mm] (1) to (0);
	\draw[->] (6) to (1);
	\draw[->,color=red,line width=0.3mm] (4) to (1);
	\draw[->] (5) to (1);
	\draw[->,color=red,line width=0.3mm] (8) to (4);
	\draw[->] (9) to (4);
	\draw[->] (10) to (4);
	\draw[->,color=red,line width=0.3mm] (11) to (8);
	\draw[->,color=red,line width=0.3mm] (12) to (11);
	\draw[->,color=red,line width=0.3mm] (13) to (12);
	\draw[->,dotted,color=red,line width=0.3mm] (dot) to (13);
	\draw[->] (17) to (11);
	\draw[->] (18) to (11);
	\draw[->] (19) to (12);
	\draw[->] (20) to (12);
\end{scope}
\begin{scope}[xshift=10.5cm,scale=0.75]
	\node[draw=none,fill=none] (0) at (1,-1) {\textbf{\textcolor{red}{$(v_0,u_0)$}}};
	\node[draw=none,fill=none] (1) at (1,0) {$(v_1,u_1)$};
	\node[draw=none,fill=none] (2) at (-1,0) {$(v_2,u_1)$};
	\node[draw=none,fill=none] (3) at (4,0) {$(v_3,u_1)$};
	\node[draw=none,fill=none] (4) at (1,1) {$(v_0,u_2)$};
	\node[draw=none,fill=none] (a) at (0,1) {$(v_0,u_3)$};
	\node[draw=none,fill=none] (b) at (2,1) {$(v_0,u_4)$};
	\node[draw=none,fill=none] (c) at (-3,1) {$(v_4,u_2)$};
	\node[draw=none,fill=none] (d) at (-2,1) {$(v_4,u_3)$};
	\node[draw=none,fill=none] (e) at (-1,1) {$(v_4,u_4)$};
	\node[draw=none,fill=none] (i) at (3,1) {$(v_5,u_2)$};
	\node[draw=none,fill=none] (l) at (4,1) {$(v_5,u_3)$};
	\node[draw=none,fill=none] (m) at (5,1) {$(v_5,u_4)$};
	\node[draw=none,fill=none] (8) at (1,2) {$(v_1,u_0)$};
	\node[draw=none,fill=none] (a2) at (-3,2) {$(v_1,u_5)$};
	\node[draw=none,fill=none] (b2) at (-2,2) {$(v_1,u_6)$};
	\node[draw=none,fill=none] (c2) at (-1,2) {$(v_2,u_0)$};
	\node[draw=none,fill=none] (d2) at (0,2) {$(v_2,u_5)$};
	\node[draw=none,fill=none] (e2) at (2,2) {$(v_2,u_6)$};
	\node[draw=none,fill=none] (i2) at (3,2) {$(v_3,u_0)$};
	\node[draw=none,fill=none] (l2) at (4,2) {$(v_3,u_5)$};
	\node[draw=none,fill=none] (m2) at (5,2) {$(v_3,u_6)$};

	\node[draw=none,fill=none] (11) at (1,3) {$(v_0,u_1)$};
	\node[draw=none,fill=none] (a3) at (-1,3) {$(v_4,u_1)$};
	\node[draw=none,fill=none] (b3) at (4,3) {$(v_5,u_1)$};
	\node[draw=none,fill=none] (12) at (-1,4) {$(v_1,u_2)$};
	\node[draw=none,fill=none] (a4) at (-3,4) {$(v_1,u_3)$};
	\node[draw=none,fill=none] (b4) at (-2,4) {$(v_1,u_4)$};
	\node[draw=none,fill=none] (c4) at (0,4) {$(v_2,u_2)$};
	\node[draw=none,fill=none] (d4) at (1,4) {$(v_2,u_3)$};
	\node[draw=none,fill=none] (e4) at (2,4) {$(v_2,u_4)$};
	\node[draw=none,fill=none] (f4) at (3,4) {$(v_3,u_2)$};
	\node[draw=none,fill=none] (g4) at (4,4) {$(v_3,u_3)$};
	\node[draw=none,fill=none] (h4) at (5,4) {$(v_3,u_4)$};
	\node[draw=none,fill=none] (13) at (-3,5) {$(v_0,u_0)$};
	\node[draw=none,fill=none] (a5) at (-2,5) {$(v_0,u_5)$};
	\node[draw=none,fill=none] (b5) at (-1,5) {$(v_0,u_6)$};
	\node[draw=none,fill=none] (c5) at (0,5) {$(v_4,u_0)$};
	\node[draw=none,fill=none] (d5) at (1,5) {$(v_4,u_5)$};
	\node[draw=none,fill=none] (e5) at (2,5) {$(v_4,u_6)$};
	\node[draw=none,fill=none] (f5) at (3,5) {$(v_5,u_0)$};
	\node[draw=none,fill=none] (g5) at (4,5) {$(v_5,u_5)$};
	\node[draw=none,fill=none] (h5) at (5,5) {$(v_5,u_6)$};
	\node[draw=none,fill=none] (dot) at (-3,6) {};
	\draw[->,color=red,line width=0.3mm] (1) to (0);
	\draw[->,line width=0.05mm] (2) to (0);
	\draw[->,line width=0.05mm] (3) to (0);
	\draw[->,color=red,line width=0.3mm] (4) to (1);
	\draw[->,line width=0.05mm] (a) to (1);
	\draw[->,line width=0.05mm] (b) to (1);
	\draw[->,line width=0.05mm] (c) to (1);
	\draw[->,line width=0.05mm] (d) to (1);
	\draw[->,line width=0.05mm] (e) to (1);
	\draw[->,line width=0.05mm] (i) to (3);
	\draw[->,line width=0.05mm] (l) to (3);
	\draw[->,line width=0.05mm] (m) to (3);
	\draw[->,line width=0.05mm] (a2) to (4);
	\draw[->,line width=0.05mm] (b2) to (4);
	\draw[->,line width=0.05mm] (c2) to (4);
	\draw[->,line width=0.05mm] (d2) to (4);
	\draw[->,line width=0.05mm] (e2) to (4);
	\draw[->,line width=0.05mm] (i2) to (4);
	\draw[->,line width=0.05mm] (l2) to (4);
	\draw[->,line width=0.05mm] (m2) to (4);
	\draw[->,color=red,line width=0.3mm] (8) to (4);
	\draw[->,color=red,line width=0.3mm] (11) to (8);
	\draw[->,line width=0.05mm] (a3) to (8);
	\draw[->,line width=0.05mm] (b3) to (i2);
	\draw[->,color=red,line width=0.3mm] (12) to (11);
	\draw[->,line width=0.05mm] (a4) to (11);
	\draw[->,line width=0.05mm] (b4) to (11);
	\draw[->,line width=0.05mm] (c4) to (11);
	\draw[->,line width=0.05mm] (d4) to (11);
	\draw[->,line width=0.05mm] (e4) to (11);
	\draw[->,line width=0.05mm] (f4) to (11);
	\draw[->,line width=0.05mm] (g4) to (11);
	\draw[->,line width=0.05mm] (h4) to (11);
	\draw[->,line width=0.05mm] (a5) to (12);
	\draw[->,line width=0.05mm] (b5) to (12);
	\draw[->,line width=0.05mm] (c5) to (12);
	\draw[->,line width=0.05mm] (d5) to (12);
	\draw[->,line width=0.05mm] (e5) to (12);
	\draw[->,line width=0.05mm] (f5) to (f4);
	\draw[->,line width=0.05mm] (g5) to (f4);
	\draw[->,line width=0.05mm] (h5) to (f4);
	\draw[->,color=red,line width=0.3mm] (13) to (12);
	\draw[->,dotted,color=red,line width=0.3mm] (dot) to (13);
\end{scope}
\end{tikzpicture}
\end{center}
    \caption{An example of product $\unrollp$ (see Definition \ref{prodintrees}) over infinite in-trees (in this case two unrolls $\unroll{G_1}{v_0}$ and $\unroll{G_2}{u_0}$). The product is intuitively equivalent to the direct product applied level by level. The roots of the unrolls and their infinite paths are highlighted in red.}
    \label{fig:prod_unroll}
\end{figure}

Considering an instance of Equation~\eqref{DDSineq}, we can now introduce a corresponding equation over unrolls, and study whether there can be more than one solution. 
For the moment, let us consider generic cyclic nodes of $A$, $X$, and $B$ (let $a$, $x$, and $b$ be respectively these three nodes). Suppose that there exists $X$ such that $\unroll{A}{a} \unrollp \unroll{X}{x}\iso\unroll{B}{b}$, and  $Y$ (not isomorphic to $X$) such that $\unroll{A}{a} \unrollp \unroll{Y}{y}\iso\unroll{B}{b}$ (with $y$ also a generic cyclic node of $Y$). If $\unroll{X}{x} \notiso \unroll{Y}{y}$, then there is a difference at some minimal level between the two infinite in-trees. Given a generic infinite in-tree $\itree$ (such as an unroll), we define the function \textbf{cut} which gives the finite subtree up to the layer $t$ (with $t\in\N$). Then, the result of $\mathrm{cut}_t(\itree)$ is a finite sub-in-tree induced by the set of vertices having at most distance $t$ from the root. 
 We will denote the cut of an unroll $\unroll{A}{a}$, at a level $t$, by $\cut{A}{a,t}$.
 Remark that the cut distributes over the $\unrollp$ product. In our problem, if $\unroll{X}{x} \notiso \unroll{Y}{y}$, then there exists a minimum $t\in\N \backslash \{0\}$ such that $\cut{X}{x,t} \notiso \cut{Y}{y,t}$ and $\cut{X}{x,t-1} \iso \cut{Y}{y,t-1}$.

Given two graphs $G_1$ and $G_2$, denote by $\homset{G_1,G_2}$ the set of homomorphisms between $G_1$ and $G_2$ and let $\homo{G_1,G_2}$
be the cardinality of $\homset{G_1,G_2}$. An important result from graph theory relates the isomorphism of two graphs to their number of incoming homomorphisms.

\begin{theorem}[Lóvasz~\cite{lovasz1971cancellation}]
Two graphs $G_1$ and $G_2$ are isomorphic iff, for all graphs $G$, we have $\homo{G,G_1}=\homo{G,G_2}$.
\end{theorem}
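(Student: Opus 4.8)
The forward implication is immediate and I would dispatch it first: if $G_1\iso G_2$ via a bijection $\psi$ that preserves and reflects edges, then $\phi\mapsto\psi\circ\phi$ is a bijection from $\homset{G,G_1}$ to $\homset{G,G_2}$ for every $G$, so $\homo{G,G_1}=\homo{G,G_2}$. The substance is entirely in the converse, and the plan is to pass from counting \emph{all} homomorphisms to counting \emph{injective} ones. Write $\mathrm{inj}(G,H)$ for the number of injective homomorphisms $G\to H$.

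The key structural fact is that every homomorphism $\phi\colon G\to H$ factors uniquely as a surjection of $G$ onto a quotient $G/\theta$ — where $\theta$ is the partition of $V(G)$ into the fibres of $\phi$ — followed by an injective homomorphism $G/\theta\hookrightarrow H$. Summing over the admissible partitions $\theta$ (those whose quotient is again a loopless graph of the relevant kind, so that the canonical quotient map is a homomorphism) yields the identity
\[
    \homo{G,H}=\sum_{\theta}\mathrm{inj}(G/\theta,H),
\]
in which the quotients $G/\theta$ depend only on $G$ and not on $H$. This is the engine of the proof; everything else is bookkeeping around it.

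From here I would separate out the finest partition, which gives $G/\theta=G$ and contributes the term $\mathrm{inj}(G,H)$; every remaining quotient has strictly fewer vertices than $G$. I would then argue by induction on $|V(G)|$: assuming $\mathrm{inj}(G',G_1)=\mathrm{inj}(G',G_2)$ for all graphs $G'$ with fewer vertices, the hypothesis $\homo{G,G_1}=\homo{G,G_2}$ together with the displayed identity forces the two leading terms to agree, i.e.\ $\mathrm{inj}(G,G_1)=\mathrm{inj}(G,G_2)$. (Equivalently one inverts the identity over the partition lattice with the Möbius function, but the induction sidesteps computing it.) Crucially the inversion is \emph{internal} to the test graph, so the shared value $\homo{G',G_1}=\homo{G',G_2}$ may be applied to every quotient $G'=G/\theta$. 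Thus equality of homomorphism counts upgrades to equality of injective-homomorphism counts for \emph{every} $G$. Taking $G=G_1$ gives $\mathrm{inj}(G_1,G_2)=\mathrm{inj}(G_1,G_1)\ge 1$, since the identity is an injective homomorphism; hence there is an injective homomorphism $G_1\to G_2$, and symmetrically one $G_2\to G_1$. The one-vertex test graph gives $|V(G_1)|=|V(G_2)|$ (as $\homo{G,H}$ equals $|V(H)|$ when $G$ is a single vertex with no edges), so both injective homomorphisms are vertex-bijections. An injective homomorphism sends distinct edges to distinct edges, so the two maps force $|E(G_1)|=|E(G_2)|$, and a vertex-bijection that maps every edge to an edge while matching edge counts must also reflect non-edges; hence it is an isomorphism and $G_1\iso G_2$.

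I expect the main obstacle to be stating the factorisation identity precisely in the right category: deciding exactly which partitions are admissible so that each quotient is again a genuine (di)graph and each homomorphism factors uniquely through its image. Once that identity is in place — and one checks it holds verbatim for the directed, outdegree-one structures used here — the inductive passage to injective counts and the closing bijection argument are routine.
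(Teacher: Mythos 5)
The paper never proves this statement: it is imported from the literature as Lovász's theorem, with the citation standing in for a proof, so there is no internal argument to compare yours against. What can be said is that your proposal is, correctly, the classical proof of the cited result. The forward direction by composition is fine, and the converse is the standard Lovász argument: each homomorphism $\phi\colon G\to H$ factors uniquely through the quotient of $G$ by its fibre partition, giving $\homo{G,H}=\sum_{\theta}\mathrm{inj}(G/\theta,H)$ with quotients depending only on $G$; induction on $|V(G)|$ (a clean substitute for M\"obius inversion on the partition lattice) then converts equality of homomorphism counts into equality of injective counts for every test graph; finally $\mathrm{inj}(G_1,G_2)=\mathrm{inj}(G_1,G_1)\geq 1$ and symmetrically, and your edge-counting argument correctly upgrades the two injective homomorphisms to an isomorphism. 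Two caveats deserve explicit mention. First, everything requires \emph{finite} graphs: the counts, the induction, and the closing ``injective both ways implies bijective'' step all use finiteness, which the paper's statement leaves implicit (and which is satisfied where the paper applies the theorem, namely to cuts of unrolls). Second, the paper applies the theorem to \emph{directed} graphs, so one needs the digraph (relational-structure) version, which is what Lovász actually proved; as you anticipate, the factorisation identity holds verbatim there, with admissible partitions being those in which no class contains both endpoints of an edge (any other partition yields a loop and contributes zero against a loopless target), and your final bijection argument is insensitive to orientation. With those two points made explicit, your proof is complete and correct, and it is essentially the argument the paper implicitly relies on by citing Lovász.
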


Then, in our case, if $\cut{A}{a,t} \unrollp \cut{X}{x,t}\iso \cut{B}{b,t}$ and $\cut{A}{a,t} \unrollp \cut{Y}{y,t} \iso \cut{B}{b,t}$, we must have
\[
\homo{G,\cut{A}{a,t} \unrollp \cut{X}{x,t}}=\homo{G,\cut{A}{a,t} \unrollp \cut{Y}{y,t}}=\homo{G,\cut{B}{b,t}}
\]
for all $G$. Then, it is necessary to understand how $\homo{G,\cut{A}{a,t} \unrollp \cut{X}{x,t}}$ and $\homo{G,\cut{A}{a,t} \unrollp \cut{Y}{y,t}}$ can be computed in terms of $\homo{G,\cut{A}{a,t}}$, $\homo{G,\cut{X}{x,t}}$, and $\homo{G,\cut{Y}{y,t}}$.

As a convention, we will use $\ftree$ to denote finite in-trees (such as infinite in-trees after a cut operation).
 

\begin{theorem}\label{th:homproduct}
For any graph $G$ and for any pair of finite in-trees $F_1, F_2$ we have
\[
\homo{G,\ftree_1 \unrollp \ftree_2}=\homo{G,\ftree_1} \cdot \homo{G,\ftree_2}.
\]
\end{theorem}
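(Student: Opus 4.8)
The plan is to upgrade the numerical identity to a bijection
\[
\homset{\dds,\ftree_1\unrollp\ftree_2}\;\longleftrightarrow\;\homset{\dds,\ftree_1}\times\homset{\dds,\ftree_2},
\]
from which the equality of cardinalities $\homo{\dds,\ftree_1\unrollp\ftree_2}=\homo{\dds,\ftree_1}\cdot\homo{\dds,\ftree_2}$ is immediate. This is essentially the counting form of the universal property of a categorical product: by Definition~\ref{prodintrees} the vertices of $\ftree_1\unrollp\ftree_2$ are pairs $(v,u)$, and an edge is present exactly when both coordinates are edges, so the two coordinate projections behave like product projections. I would therefore build the two maps explicitly and check that they are mutually inverse.

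For the forward direction, fix $\phi\in\homset{\dds,\ftree_1\unrollp\ftree_2}$ and let $\pi_1,\pi_2$ be the coordinate projections sending $(v,u)$ to $v$ and to $u$. Since $\eunroll=\set{\structure{\structure{v',u'},\structure{v,u}}\mid\structure{v',v}\in\eunroll_1\land\structure{u',u}\in\eunroll_2}$, applying $\pi_i$ to an edge of $\ftree_1\unrollp\ftree_2$ yields an edge of $\ftree_i$; hence $\pi_1\circ\phi\in\homset{\dds,\ftree_1}$ and $\pi_2\circ\phi\in\homset{\dds,\ftree_2}$, so $\phi\mapsto(\pi_1\circ\phi,\pi_2\circ\phi)$ is well defined. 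For the backward direction, given a pair $(\phi_1,\phi_2)$ I would pair them into $\psi\colon g\mapsto(\phi_1(g),\phi_2(g))$; edge preservation of $\psi$ again follows directly from the edge definition, because if $(g',g)$ is an edge of $\dds$ then $(\phi_1(g'),\phi_1(g))\in\eunroll_1$ and $(\phi_2(g'),\phi_2(g))\in\eunroll_2$, which is exactly the condition for $(\psi(g'),\psi(g))$ to be an edge. The two assignments are visibly inverse to one another, since a vertex of the product is literally the pair of its two projections.

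The main obstacle is the one step that is not purely formal: checking that $\psi$ actually takes values in the vertex set of $\ftree_1\unrollp\ftree_2$, and not merely in the Cartesian product $\vunroll_1\times\vunroll_2$. By Definition~\ref{prodintrees} this vertex set is generated from the root $(r_1,r_2)$ by repeatedly descending to children, so it consists of exactly those pairs $(v,u)$ whose two coordinates sit at the same level and whose ancestor path is matched all the way back to the root. I expect to derive the required level synchronisation from the rigidity of in-trees: every non-root vertex has a unique out-neighbour (its parent), so along any edge of $\dds$ the images in $\ftree_1$ and in $\ftree_2$ must both drop by exactly one level. Tracking this forces $\phi_1(g)$ and $\phi_2(g)$ onto a common layer reachable from $(r_1,r_2)$, so that $(\phi_1(g),\phi_2(g))$ indeed belongs to the vertex set produced by Definition~\ref{prodintrees}. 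Establishing this matching cleanly — showing that the unique-parent structure keeps the two projections on the same root-anchored layer — is where the specific in-tree structure, rather than a generic digraph product, is genuinely used, and it is the step I expect to require the most care. Once it is in place, $\psi$ is a well-defined homomorphism into $\ftree_1\unrollp\ftree_2$, the two maps are mutually inverse, and the product formula for $\homo{\cdot}$ follows at once.
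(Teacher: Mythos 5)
Your construction is the same one the paper uses: your forward map $\phi\mapsto(\pi_1\circ\phi,\pi_2\circ\phi)$ is exactly Lemma~\ref{lemma:proj}, and your inverse is the pairing map that appears in the paper's surjectivity argument. The genuine difference is that you explicitly flag the step the paper performs silently: one must check that the pairing $\psi(g)=(\phi_1(g),\phi_2(g))$ lands in the vertex set of $\ftree_1\unrollp\ftree_2$, which by Definition~\ref{prodintrees} is \emph{not} all of $\vunroll_1\times\vunroll_2$ but only the pairs generated from $(r_1,r_2)$ by simultaneously taking predecessors, i.e.\ pairs of vertices at equal depth. You are right that this is the crux, but the argument you sketch for it cannot succeed. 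Edge preservation only forces the two images to descend \emph{in lockstep}: across an edge $(g',g)$ of $G$, both $\phi_1$ and $\phi_2$ drop by exactly one level, so the difference $\mathrm{depth}(\phi_1(\cdot))-\mathrm{depth}(\phi_2(\cdot))$ is constant on each connected component of $G$ --- but nothing forces that constant to be zero. The statement quantifies over \emph{all} graphs $G$: $G$ need not be rooted, connected, or even have edges, so there is no vertex whose images are anchored to the two roots, and the two homomorphisms may sit at offset levels.

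In fact the gap is not patchable, because the identity itself is false as stated. Take $G$ to be the one-vertex graph with no edges and $\ftree_1=\ftree_2$ a single edge $a_1\to a_0$: then $\ftree_1\unrollp\ftree_2$ is again a single edge, so $\homo{G,\ftree_1\unrollp\ftree_2}=2$ while $\homo{G,\ftree_1}\cdot\homo{G,\ftree_2}=4$. The failure is not an artefact of edgeless $G$: for $G$ a single directed edge and $\ftree_1=\ftree_2$ a directed path with two edges, the product is again that path and one gets $2\neq 2\cdot 2$ (note that both counterexamples use trees of equal height, so the equal-height hypothesis of Theorem~\ref{th:isofinitetree} does not rescue it). What your universal-property intuition does prove is multiplicativity for the \emph{full} direct product of $\ftree_1$ and $\ftree_2$ on all of $\vunroll_1\times\vunroll_2$ (Definition~\ref{FGprod} applied to the trees), which is the categorical product; but that digraph is a forest of which $\ftree_1\unrollp\ftree_2$ is only the component containing $(r_1,r_2)$, and homomorphism counts are not preserved when the other components are discarded. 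So the step you singled out is precisely where the paper's own surjectivity argument is unjustified as well; any correct version must restrict the statement --- e.g.\ to homomorphisms that preserve roots or depths, or to $G$ ranging over connected in-trees of the same height as $\ftree_1$ and $\ftree_2$, where every homomorphic image of a maximal path is forced to end at the root and your level-synchronisation argument does go through --- and must then also justify a correspondingly restricted form of Lov\'asz's theorem, since the version cited requires all graphs $G$.
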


To prove this theorem, we need the following lemma.
 
\begin{lemma}\label{lemma:proj}

For any graph $G$ and for any pair of finite in-trees $F_1, F_2$, if $\onehom \in\homset{G,\ftree_1 \unrollp \ftree_2}$ with $\onehom(u)=(u_1,u_2)$ then $\pi_1 \circ \onehom\in\homset{G,\ftree_1}$ and $\pi_2 \circ \onehom\in\homset{G,\ftree_2}$, where $\pi_1$ and $\pi_2$ are the left and right projections, respectively.
\end{lemma}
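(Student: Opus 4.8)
The plan is to recognise this as an instance of the universal property of the categorical (direct) product: the statement amounts to showing that the two coordinate projections $\pi_1 : \ftree_1 \unrollp \ftree_2 \to \ftree_1$ and $\pi_2 : \ftree_1 \unrollp \ftree_2 \to \ftree_2$ are themselves graph homomorphisms, after which $\pi_1 \circ \onehom$ and $\pi_2 \circ \onehom$ are homomorphisms simply because a composition of homomorphisms is a homomorphism.

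First I would verify that $\pi_1$ is a homomorphism directly from Definition~\ref{prodintrees}. Let $\structure{\structure{v',u'},\structure{v,u}}$ be an arbitrary edge of $\ftree_1 \unrollp \ftree_2$. By the definition of the edge set $\eunroll$ of the product, this forces $\structure{v',v}\in\eunroll_1$ and $\structure{u',u}\in\eunroll_2$. Applying $\pi_1$ to the two endpoints yields the pair $\structure{v',v}$, which is an edge of $\ftree_1$; hence $\pi_1$ sends edges to edges and is a homomorphism, and the same argument on the second coordinate gives that $\pi_2$ is a homomorphism. This conclusion is insensitive to the fact that $\ftree_1 \unrollp \ftree_2$ is only the induced subgraph of the full direct product that is reachable backwards from the root $\structure{r_1,r_2}$: we only ever use that each edge actually present in the product projects to an edge in each factor, never that all such edges are present.

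Then I would conclude. Given $\onehom \in \homset{G, \ftree_1 \unrollp \ftree_2}$, take any edge $(a,b)$ of $G$; since $\onehom$ is a homomorphism, $\structure{\onehom(a),\onehom(b)}$ is an edge of $\ftree_1 \unrollp \ftree_2$. Writing $\onehom(a)=(a_1,a_2)$ and $\onehom(b)=(b_1,b_2)$, we have $(\pi_1\circ\onehom)(a)=a_1$ and $(\pi_1\circ\onehom)(b)=b_1$, and by the edge characterisation just established $\structure{a_1,b_1}\in\eunroll_1$. Thus $\pi_1\circ\onehom$ maps every edge of $G$ to an edge of $\ftree_1$, i.e. $\pi_1\circ\onehom\in\homset{G,\ftree_1}$; symmetrically $\pi_2\circ\onehom\in\homset{G,\ftree_2}$.

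There is essentially no hard step here: the whole content is packaged into the edge set of $\unrollp$ in Definition~\ref{prodintrees}, which is designed precisely so that the projections are homomorphisms. The only point deserving care is to keep the directed-edge convention (edges oriented towards the root) consistent, so that ``$\structure{v',v}\in\eunroll_1$'' is read with the correct orientation in each factor; once the orientations are aligned with the definition, the argument is purely mechanical. This lemma supplies the easy half of Theorem~\ref{th:homproduct}: it produces, from each $\onehom \in \homset{G, \ftree_1\unrollp\ftree_2}$, a pair $(\pi_1\circ\onehom,\,\pi_2\circ\onehom) \in \homset{G,\ftree_1}\times\homset{G,\ftree_2}$, and the substantive work of that theorem will be to show that this assignment is a bijection.
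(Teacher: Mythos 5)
Your proof is correct and follows essentially the same argument as the paper's: both take an edge of $G$, push it through $\onehom$ to an edge of $\ftree_1 \unrollp \ftree_2$, and read off from the edge-set definition of $\unrollp$ that each coordinate projection yields an edge of the corresponding factor. Your repackaging (first noting that $\pi_1,\pi_2$ are themselves homomorphisms, then composing) is a cosmetic reorganisation of the identical edge-chasing computation, so there is nothing substantive to distinguish the two.
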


\begin{proof}
  Take $\onehom\in\homset{G,\ftree_1 \unrollp \ftree_2}$.
  If $(u,v)$ is an edge of $G$, then $(\onehom(u),\onehom(v))$ must be an edge of $\ftree_1 \unrollp \ftree_2$. Let us suppose that $\onehom(u)=(u_1,u_2)$ and $\onehom(v)=(v_1,v_2)$. From the product definition, the fact that $((u_1,u_2),(v_1,v_2))$ is an edge of $\ftree_1 \unrollp \ftree_2$ implies that $(u_1,v_1)$ is an edge of $\ftree_1$ and $(u_2,v_2)$ is an edge of $\ftree_2$. By applying the projection $\pi_1$, we obtain $(\pi_1 \circ \onehom)(u)=u_1$ and $(\pi_1 \circ \onehom)(v)=v_1$. Then, $\pi_1 \circ \onehom \in\homset{G,\ftree_1}$, and by same reasoning, we obtain $\pi_2 \circ \onehom\in\homset{G,\ftree_2}$.
  \end{proof}
 

  \begin{proof}[Proof of Theorem \ref{th:homproduct}]
  Let $\bijHom: \homset{G,\ftree_1 \unrollp \ftree_2} \to \homset{G,\ftree_1} \times \homset{G,\ftree_2}$ be the function defined by $\bijHom(\onehom) = (\pi_1 \circ \onehom, \pi_2 \circ \onehom)$; by Lemma~\ref{lemma:proj} this is indeed a well-defined function. The goal is to prove that $\bijHom$ is a bijection. First, let us prove the surjectivity of $\bijHom$. Let $\structure{\tau_1,\tau_2}\in\homset{G,\ftree_1} \times \homset{G,\ftree_2}$; we need to find $\onehom\in\homset{G,\ftree_1 \unrollp \ftree_2}$ such that $\bijHom(\onehom)=\structure{\onehom_1,\onehom_2}$. For $\onehom(v)=(\onehom_1(v),\onehom_2(v))$, 
  \[\bijHom(\onehom)(v)=(\pi_1(\onehom(v)),\pi_2(\onehom(v)))=(\pi_1(\onehom_1(v),\onehom_2(v)),\pi_2(\onehom_1(v),\onehom_2(v))) = (\onehom_1(v),\onehom_2(v))\] 
  implies $\bijHom(\onehom)=(\onehom_1,\onehom_2)$. Let us now prove that $\bijHom$ is also injective, \ie, $\bijHom(\onehom)=\bijHom(\onehom')$ implies $\onehom=\onehom'$. We have $\bijHom(\onehom)(v)=(\pi_1(\onehom(v)),\pi_2(\onehom(v)))$ and $\bijHom(\onehom')(v)=(\pi_1(\onehom'(v)),\pi_2(\onehom'(v)))$. If $\bijHom(\onehom)=\bijHom(\onehom')$ then $\pi_1(\onehom(v)) = \pi_1(\onehom'(v))$ and $\pi_2(\onehom(v)) = \pi_2(\onehom'(v))$, which implies $\onehom=\onehom'$.
  \end{proof}


As an immediate consequence of Theorem~\ref{th:homproduct} we have
\begin{align*}
\homo{G,\cut{A}{a,t} \unrollp \cut{X}{x,t}} &= \homo{G,\cut{A}{a,t}} \cdot \homo{G,\cut{X}{x,t}}, \\
\homo{G,\cut{A}{a,t} \unrollp \cut{Y}{y,t}} &= \homo{G,\cut{A}{a,t}} \cdot \homo{G,\cut{Y}{y,t}}.
\end{align*}


The following lemma relates the existence of homomorphisms towards trees to the existence of homomorphisms towards paths.
 
  \begin{lemma}\label{lemma:treepath}
  If $G$ is a graph and $F$ is a finite in-tree, then $\homo{G,F}\neq 0$ iff $\homo{G,P_s}\neq 0$, where $P_s$ is any directed path with $s$ edges and $s$ is the height of $F$.
  \end{lemma}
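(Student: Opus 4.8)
The plan is to prove both directions of the equivalence, with the key observation being that a finite in-tree $F$ of height $s$ always admits a canonical homomorphism onto the directed path $P_s$ given by the distance-to-root function, while conversely any homomorphism into $P_s$ can be ``pulled back'' along a homomorphism into $F$. Let me denote the root of $F$ by $\rho$ and, for each vertex $w$ of $F$, let $d(w)$ be its distance to the root (\emph{i.e.} the length of the unique directed path from $w$ down to $\rho$). Label the vertices of $P_s$ as $p_0, p_1, \ldots, p_s$ with edges $(p_{k+1}, p_k)$ directed towards $p_0$.

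First I would prove the forward direction. Assume $\homo{G,F} \neq 0$ and fix some $\onehom \in \homset{G,F}$. The idea is to compose $\onehom$ with the natural level map $\delta : F \to P_s$ defined by $\delta(w) = p_{d(w)}$. This $\delta$ is a homomorphism: every edge of $F$ goes from a vertex at distance $k+1$ to its parent at distance $k$ (since $F$ is an in-tree, each non-root vertex has a unique outgoing edge towards the root, decreasing distance by exactly one), so $\delta$ maps it to the edge $(p_{k+1}, p_k)$ of $P_s$. Hence $\delta \circ \onehom \in \homset{G, P_s}$, which shows $\homo{G, P_s} \neq 0$. The only subtlety here is confirming that distances are well-defined and that every edge strictly decreases the distance by one, which follows directly from the in-tree structure (unique directed path to the root). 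Because $F$ has height $s$, the image of $\delta$ lands in $\{p_0, \ldots, p_s\}$, so $\delta$ is genuinely a map into $P_s$.

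For the converse, I would assume $\homo{G, P_s} \neq 0$ and construct a homomorphism $G \to F$. Here the crucial structural fact is that $P_s$ itself embeds into $F$: since $F$ has height $s$, it contains a vertex $w^\star$ at distance exactly $s$ from the root, and the unique directed path from $w^\star$ to $\rho$ is an isomorphic copy of $P_s$ inside $F$. Let $\iota : P_s \hookrightarrow F$ be this embedding (a homomorphism). Then given any $\sigma \in \homset{G, P_s}$, the composition $\iota \circ \sigma$ is a homomorphism from $G$ into $F$, so $\homo{G, F} \neq 0$.

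The main obstacle I anticipate is purely one of bookkeeping rather than depth: I must be careful that the statement quantifies over \emph{any} directed path with $s$ edges, so both constructions must respect edge orientation consistently (the in-tree edges point towards the root, and $P_s$ must be oriented the same way for the level map $\delta$ and the embedding $\iota$ to be homomorphisms). As long as the orientation conventions are aligned, both directions are short compositions with $\delta$ and $\iota$ respectively, and no counting of homomorphisms is needed—only the nonvanishing of the respective hom-sets. It is worth noting explicitly that this argument uses the fact that $F$ is connected with a single root and that its height equals $s$; if $F$ had height strictly less than $s$, the embedding $\iota$ would fail, and if strictly greater, $\delta$ would overshoot $P_s$, so the matching of the path length to the height of $F$ is exactly what makes the equivalence tight.
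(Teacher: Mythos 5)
Your proof is correct and follows essentially the same route as the paper's: the paper likewise composes a given homomorphism with the level map $F \to P_s$ (sending each node of the $i$-th level to the $i$-th vertex of the path) in one direction, and with an embedding of $P_s$ along a path of length $s$ inside $F$ in the other. Your write-up simply makes explicit the verifications (that the level map is a homomorphism, that the height hypothesis is what guarantees both maps exist) which the paper leaves implicit.
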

 
  \begin{proof}
  There always exists a homomorphism from an in-tree $\ftree$ to $P_s$ with $s$ being the height of $\ftree$: one can map all the nodes of the $i$-th level of the tree to the $i$-th node of the path. Conversely, a homomorphism from $P_s$ to $\ftree$ exists by choosing any path of length $s$ in $\ftree$. By composition of homomorphisms, $\homo{G,F}\neq 0$ iff $\homo{G,P_s}\neq 0$.
  \end{proof}
 
 We are studying when $\homo{G,\cut{A}{a,t}}\cdot\homo{G,\cut{X}{x,t}} =\homo{G,\cut{A}{a,t}} \cdot\homo{G,\linebreak\cut{Y}{y,t}}=\homo{G,\cut{B}{b,t}}$ implies $\cut{X}{x,t} \iso \cut{Y}{y,t}$. This is a special case of the well-known \emph{cancellation problem} \cite{hammack2011handbook}.
 In this problem, one needs to decide if $A \diamond X \iso A \diamond Y \iso B$ implies $X \iso Y$, according to a specific definition of the product operation $\diamond$. 
 In our case, we aim at studying cancellation over FGs through the $\unrollp$ product over unrolls. At this point of the reasoning, we can reduce our problem to a corresponding one over finite in-trees.
 
 
 \begin{theorem}\label{th:isofinitetree}
 For any finite in-trees $\ftree$, $X$, and $Y$ of the same height,  $\ftree \unrollp X \iso \ftree \unrollp Y$ implies $X \iso Y$.
 \end{theorem}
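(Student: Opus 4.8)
The plan is to reduce the desired isomorphism $X \iso Y$ to an equality of homomorphism counts via Lóvász's theorem, and then to exploit the multiplicativity of $\homo{G,-}$ over the $\unrollp$ product (Theorem~\ref{th:homproduct}) to transfer the cancellation from the products down to the factors.

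First I would note that $\ftree \unrollp X \iso \ftree \unrollp Y$ implies, by Lóvász's theorem, that $\homo{G,\ftree \unrollp X} = \homo{G,\ftree \unrollp Y}$ for every graph $G$. Applying Theorem~\ref{th:homproduct} to each side turns this into
\[
\homo{G,\ftree} \cdot \homo{G,X} = \homo{G,\ftree} \cdot \homo{G,Y} \qquad \text{for all graphs } G.
\]
The goal then becomes to cancel the common factor $\homo{G,\ftree}$ so as to obtain $\homo{G,X}=\homo{G,Y}$ for every $G$, which by a second application of Lóvász's theorem yields $X \iso Y$.

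The only obstruction to cancellation is a graph $G$ with $\homo{G,\ftree}=0$, for which the displayed identity relates nothing about $\homo{G,X}$ and $\homo{G,Y}$. This is exactly where the hypothesis that $\ftree$, $X$, and $Y$ share a common height $s$ becomes essential, and the argument I would run is a case split on whether $\homo{G,\ftree}$ vanishes. By Lemma~\ref{lemma:treepath}, fixing a directed path $P_s$ with $s$ edges, we have $\homo{G,\ftree}\neq 0 \iff \homo{G,P_s}\neq 0 \iff \homo{G,X}\neq 0 \iff \homo{G,Y}\neq 0$, since all three in-trees have height $s$. Hence $\homo{G,\ftree}=0$ forces $\homo{G,X}=\homo{G,Y}=0$, so the required equality holds trivially on precisely those graphs where cancellation is unavailable; on every other graph $\homo{G,\ftree}\neq 0$ and we simply divide it out. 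Combining the two cases gives $\homo{G,X}=\homo{G,Y}$ for all $G$, and Lóvász's theorem closes the argument.

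I expect the division step itself to be entirely routine; the real content of the proof is the case analysis on the vanishing of $\homo{G,\ftree}$, and in particular the observation that the equal-height hypothesis is not a technical convenience but the precise ingredient that synchronises the vanishing of the three homomorphism counts and thereby rescues cancellation on the bad graphs. Dropping it would decouple the supports of $\homo{G,\ftree}$ and $\homo{G,X}$ (they would be controlled by paths of different lengths through Lemma~\ref{lemma:treepath}), so this is the step I would watch most carefully when writing the proof in full.
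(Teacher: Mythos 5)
Your proposal is correct and follows essentially the same route as the paper's own proof: apply Theorem~\ref{th:homproduct} to turn the isomorphism into the identity $\homo{G,\ftree}\cdot\homo{G,X}=\homo{G,\ftree}\cdot\homo{G,Y}$, cancel when $\homo{G,\ftree}\neq 0$, and use Lemma~\ref{lemma:treepath} together with the equal-height hypothesis to handle the case $\homo{G,\ftree}=0$, concluding via Lóvász's theorem. Your emphasis on the equal-height hypothesis as the ingredient that synchronises the vanishing of the three homomorphism counts is exactly the point the paper's argument relies on.
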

 
  \begin{proof}
  According to Theorem \ref{th:homproduct}, since $\ftree \unrollp X \iso \ftree \unrollp Y$, we have $\homo{G,\ftree} \cdot \homo{G,X}=\homo{G,\ftree} \cdot \homo{G,Y}$  for all graphs $G$. If $\homo{G,\ftree}\neq0$, we can divide by it to obtain $\homo{G,X}=\homo{G,Y}$. If $\homo{G,\ftree}=0$, according to Lemma \ref{lemma:treepath}, we have $\homo{G,P_{s}}=0$ where $s$ is the height of $\ftree$. Since $\ftree$, $X$, and $Y$ have the same height, also $\homo{G,X}=0$ and $\homo{G,Y}=0$.  
  \end{proof}
 
Theorem~\ref{th:isofinitetree} over finite in-trees can now be generalized to infinite in-trees, including unrolls.
\begin{theorem}\label{th:isoinfinitetree}
$\itree \unrollp X \iso \itree \unrollp Y$ implies $X \iso Y$, for all infinite in-trees $X,Y$, and $\itree$.
\end{theorem}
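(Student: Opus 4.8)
The plan is to prove the contrapositive and reduce the infinite statement to the finite one already established in Theorem~\ref{th:isofinitetree}, using the cut operation as the bridge between the two settings. The governing principle is that two infinite in-trees carrying an infinite path from the root are isomorphic if and only if all of their finite cuts are isomorphic; consequently any genuine difference between $X$ and $Y$ must already surface at some finite depth, where finite cancellation applies.

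Concretely, I would first assume $X \notiso Y$ and invoke the remark preceding the statement: there is a minimal $t \in \N \setminus \{0\}$ with $\cut{X}{t} \notiso \cut{Y}{t}$ while $\cut{X}{t-1} \iso \cut{Y}{t-1}$. Then, arguing by contradiction, suppose $\itree \unrollp X \iso \itree \unrollp Y$. Since the cut operation preserves isomorphism and distributes over $\unrollp$, applying $\cut{\cdot}{t}$ to both sides yields
\[
\cut{\itree}{t} \unrollp \cut{X}{t} \;\iso\; \cut{\itree}{t} \unrollp \cut{Y}{t}.
\]
Because each of $\itree$, $X$, and $Y$ carries an infinite path from its root, these three cuts are finite in-trees all of height exactly $t$, so the hypotheses of Theorem~\ref{th:isofinitetree} are met and it gives $\cut{X}{t} \iso \cut{Y}{t}$, contradicting the choice of $t$. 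Hence $\itree \unrollp X \notiso \itree \unrollp Y$, which is exactly the contrapositive of the claim.

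The step I expect to be the real crux is the equivalence ``$X \iso Y$ iff $\cut{X}{t} \iso \cut{Y}{t}$ for every $t$'', since it is what licenses passing from a global non-isomorphism to a finite witness. The forward direction is immediate, as cuts are preserved by isomorphisms; the converse is a compactness (König's-lemma) argument: one organises the partial isomorphisms $\cut{X}{t} \to \cut{Y}{t}$ into a finitely branching tree—the finite branching coming from local finiteness of unrolls, i.e.\ the finite in-degrees inherited from a finite FG—and extracts an infinite branch, which assembles into an isomorphism $X \iso Y$. I would make this local-finiteness hypothesis explicit, as for arbitrary (e.g.\ infinitely branching, bounded-depth) in-trees the reduction could fail; for the in-trees relevant here, namely unrolls and their $\unrollp$-products, both the infinite-path and local-finiteness conditions hold, so the argument goes through.

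Finally, I would verify the two structural facts used as black boxes: that $\cut{\cdot}{t}$ genuinely commutes with $\unrollp$, which is immediate from the level-by-level definition of the product since a vertex of $\itree \unrollp X$ at distance at most $t$ from the root projects to vertices at distance at most $t$ in each factor; and that all three cuts share the common height $t$, so the same-height requirement of Theorem~\ref{th:isofinitetree} is satisfied. With these in place, the infinite theorem follows purely formally from its finite counterpart.
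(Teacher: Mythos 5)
Your proof is correct and follows essentially the same route as the paper's: take the minimal level $t$ at which $X$ and $Y$ differ, cut both sides of $\itree \unrollp X \iso \itree \unrollp Y$ at level $t$, use distributivity of the cut over $\unrollp$, and apply the finite cancellation theorem (Theorem~\ref{th:isofinitetree}, with the same-height hypothesis guaranteed by the infinite paths from the roots) to reach a contradiction. The one point where you go beyond the paper is the finite-witness claim that non-isomorphic infinite in-trees must already differ at some finite cut: the paper asserts this without justification, whereas you correctly identify it as the crux and observe that it rests on a K\H{o}nig's-lemma compactness argument requiring local finiteness --- a hypothesis that genuinely matters, since for infinitely-branching in-trees both this claim and cancellation itself can fail (e.g.\ an in-tree whose root has infinitely many leaf children plus one infinite path absorbs an extra leaf under $\unrollp$), while it holds automatically for unrolls of finite FGs and their products, which is the setting the theorem is actually used in.
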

 
  \begin{proof}
  If $X \notiso Y$, then there exists a minimum $t\in\N\setminus\set{0}$ such that $\mathrm{cut}_t(X) \notiso \mathrm{cut}_t(Y)$ and $\mathrm{cut}_{t-1}(X) \iso \mathrm{cut}_{t-1}(Y)$. Now, since $\itree \unrollp X \iso \itree \unrollp Y$, we have $\mathrm{cut}_t(\itree \unrollp X) \iso \mathrm{cut}_t(\itree \unrollp Y)$. By the distributivity of the cut operation over products, we have $\mathrm{cut}_t(\itree) \unrollp \mathrm{cut}_t(X)  \iso \mathrm{cut}_t(\itree) \unrollp \mathrm{cut}_t(Y)$. According to Theorem~\ref{th:isofinitetree}, this means that $\mathrm{cut}_t(X) \iso \mathrm{cut}_t(Y)$, which is a contradiction.
  \end{proof}
 
 Up to now, we have proved that, considering two equations over unrolls, $\unroll{A}{a} \unrollp \unroll{X}{x} \iso \unroll{A}{a} \unrollp \unroll{Y}{y} \iso \unroll{B}{b}$ implies $\unroll{X}{x} \iso \unroll{Y}{y}$. This means that given a basic equation of unrolls and given $a$ and $b$ nodes, the solution, if any, is unique. Now, we want to show how many equations over unrolls we need to study to enumerate the solutions of $A \times X \supseteq B$. Note that, in the case of $A \times X$ with $\gcd(|\pp_A|,|\pp_X |)\neq 1$, the result of the product of the unrolls of $A$ and $X$ will be the unroll of only one component of $A \times X$, which is in accordance with Equation \eqref{DDSineq}. 

Let us fix $b$ and study, for every $a\in \pp_A$, the corresponding equation over unrolls. This gives us $|\pp_A|$ equations to study. If we fix $a$, we have $|\pp_B|$ equations to consider. Hence, fixing $b$ is more efficient, since $|\pp_A|<|\pp_B|$. The question is now to determine if it is necessary to try another $b'$. To answer this, we introduce the notion of roll, which is intuitively the opposite of an unroll.

\begin{definition}[Roll of an infinite in-tree]\label{def:roll}
Let $\itree=\structure{\vunroll,\eunroll}$ be an infinite in-tree with root $r$ and only one infinite path $(\ldots, v_n, v_{n-1}, \ldots, v_2, v_1 = r)$, and let $\rho \ge 1$ be an integer. Let $J=(\vunroll,\eunroll')$ with $\eunroll'= (\eunroll \setminus \set{(v_{\rho+1},v_\rho)}) \cup \set{(r,v_\rho)}$. Then $J = \itree' + J'$ where $I'$ is an infinite in-tree and $J'$ is a finite connected FG with a cycle of length $\rho$. We call $J'$ the \emph{roll} (of length $\rho$) of $\itree$, denoted $\roll{\itree}{\rho}$.
\end{definition}

Notice that, for all $a \in \pp_A$, we have $\roll{\unroll{A}{a}}{|\pp_A|} \iso A$. 
That is, rolling up the unroll of $A$ from any of its periodic nodes always yields the initial FG $A$ if the length $|\pp_A|$ is used.   

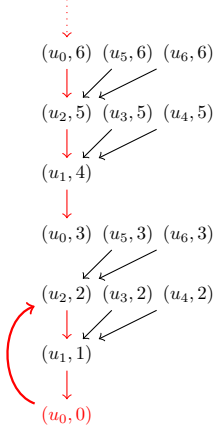
\begin{figure}
    \begin{center}
    
\begin{tikzpicture}
\begin{scope}[xshift=3.5cm,scale=0.8]
	\node[draw=none,fill=none, scale=0.7] (0) at (-1,-1) {\textbf{\textcolor{red}{$(u_0,0)$}}};
	\node[draw=none,fill=none, scale=0.7] (1) at (-1,0) {$(u_1,1)$};
	\node[draw=none,fill=none, scale=0.7] (4) at (-1,1) {$(u_2,2)$};
	\node[draw=none,fill=none, scale=0.7] (5) at (0,1) {$(u_3,2)$};
	\node[draw=none,fill=none, scale=0.7] (6) at (1,1) {$(u_4,2)$};
	\node[draw=none,fill=none, scale=0.7] (8) at (-1,2) {$(u_0,3)$};
	\node[draw=none,fill=none, scale=0.7] (9) at (0,2) {$(u_5,3)$};
	\node[draw=none,fill=none, scale=0.7] (10) at (1,2) {$(u_6,3)$};
	\node[draw=none,fill=none, scale=0.7] (11) at (-1,3) {$(u_1,4)$};
	\node[draw=none,fill=none, scale=0.7] (12) at (-1,4) {$(u_2,5)$};
	\node[draw=none,fill=none, scale=0.7] (17) at (0,4) {$(u_3,5)$};
	\node[draw=none,fill=none, scale=0.7] (18) at (1,4) {$(u_4,5)$};
	\node[draw=none,fill=none, scale=0.7] (13) at (-1,5) {$(u_0,6)$};
	\node[draw=none,fill=none, scale=0.7] (19) at (0,5) {$(u_5,6)$};
	\node[draw=none,fill=none, scale=0.7] (20) at (1,5) {$(u_6,6)$};
	\node[draw=none,fill=none, scale=0.7] (dot) at (-1,6) {};
	\draw[->,color=red] (1) to (0);
	\draw[->] (6) to (1);
	\draw[->,color=red] (4) to (1);
	\draw[->] (5) to (1);
	\draw[->] (9) to (4);
	\draw[->] (10) to (4);
	\draw[->,color=red] (11) to (8);
	\draw[->,color=red] (12) to (11);
	\draw[->,color=red] (13) to (12);
	\draw[->,dotted,color=red] (dot) to (13);
	\draw[->,color=red,line width=0.3mm, bend left=70] (0) to (4);
	\draw[->] (17) to (11);
	\draw[->] (18) to (11);
	\draw[->] (19) to (12);
	\draw[->] (20) to (12);
\end{scope}
\end{tikzpicture}
\end{center}
    \caption{According to Definition \ref{def:roll}, given an infinite in-tree (or an unroll as in this case) and an integer $\rho$ (here equal to $2$), we can define the roll as the connected component with a cycle of length $\rho+1$ obtained by deleting the edge from $(u_0,3)$ to $(u_2,2)$ and adding another one from $(u_0,0)$ to $(u_2,2)$, in this case. }
    \label{fig:roll}
\end{figure}




\begin{lemma}\label{le:isoroll}
Let $A \times X \supseteq B$ with $\lcm(|\pp_A|,|\pp_X|)=|\pp_B|$. Suppose that:
\begin{itemize}
    \item $\unroll{A}{a} \unrollp \unroll{X}{x} \iso \unroll{B}{b}$ for some $a \in \pp_A$, $x \in \pp_X$, and $b \in \pp_B$;
    \item $\unroll{A}{a'} \unrollp \unroll{Y}{y} \iso \unroll{B}{b'}$ for some connected FG $Y$ with $y \in \pp_Y$ and $|\pp_Y|=|\pp_X|$;
    \item $a' = f_A^k(a)$ and $b' = f_B^k(b)$ for some $k \in \N$, where~$f_A^k$ and~$f_B^k$ are the functions represented by FGs~$A$ and~$B$.
\end{itemize}
Then, $X \iso Y$.
\end{lemma}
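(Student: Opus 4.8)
The plan is to reduce the statement to the cancellation property for infinite in-trees (Theorem~\ref{th:isoinfinitetree}) by rewriting $\unroll{B}{b'}$ as a product of unrolls that shares the factor $\unroll{A}{a'}$ with the second hypothesis, and then to transport the resulting isomorphism of unrolls back down to $X$ and $Y$ through the roll operation, which is the inverse of unrolling.

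First I would use the first hypothesis to identify $B$ with the component of $A \times X$ singled out by the base nodes $a$ and $x$. By construction, the product of in-trees models the direct product level by level, so $\unroll{A}{a} \unrollp \unroll{X}{x} \iso \unroll{A \times X}{(a,x)}$. Combining this with $\unroll{A}{a} \unrollp \unroll{X}{x} \iso \unroll{B}{b}$, and rolling both infinite in-trees with the common length $|\pp_B| = \lcm(|\pp_A|,|\pp_X|)$ (which recovers the original FG on each side), yields an isomorphism $\psi$ between $B$ and the component $C$ of $A \times X$ that contains $(a,x)$, with $\psi(b) = (a,x)$.

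Next I would push the shift forward along the cycle. Since the product function acts componentwise, $f_{A\times X}^k(a,x) = (f_A^k(a), f_X^k(x))$, so applying $\psi$ to $b' = f_B^k(b)$ gives $\psi(b') = f_C^k(a,x) = (a', x')$, where I set $x' := f_X^k(x)$, a cyclic node of $X$. Hence $\unroll{B}{b'} \iso \unroll{C}{(a',x')} \iso \unroll{A}{a'} \unrollp \unroll{X}{x'}$. Substituting this into the second hypothesis gives $\unroll{A}{a'} \unrollp \unroll{Y}{y} \iso \unroll{A}{a'} \unrollp \unroll{X}{x'}$, and Theorem~\ref{th:isoinfinitetree} cancels the common factor $\unroll{A}{a'}$ to leave $\unroll{Y}{y} \iso \unroll{X}{x'}$.

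Finally I would roll both infinite in-trees with the common length $|\pp_Y| = |\pp_X|$: since rolling is well defined on isomorphism classes of infinite in-trees and $\roll{\unroll{X}{x'}}{|\pp_X|} \iso X$, $\roll{\unroll{Y}{y}}{|\pp_Y|} \iso Y$, the isomorphism $\unroll{Y}{y} \iso \unroll{X}{x'}$ descends to $X \iso Y$, as required. I expect the main obstacle to be the first step: justifying cleanly that the base-node shift on $B$ corresponds to the simultaneous shift on the two factors, i.e.\ that $\psi$ sends $b$ to $(a,x)$ and is compatible with $f_B$ and $f_{A \times X}$, so that $\unroll{B}{b'}$ again factors through $\unroll{A}{a'}$. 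Once that correspondence is in place, the remainder is a direct application of the already-established cancellation theorem and of the roll/unroll inverse relationship $\roll{\unroll{\cdot}{\cdot}}{|\pp|} \iso \cdot$.
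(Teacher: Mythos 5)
Your proof is correct and follows essentially the same route as the paper: both arguments align the two product equations at a common pair of cyclic base points by shifting along the cycles (justified via the roll/unroll correspondence), cancel the common factor using Theorem~\ref{th:isoinfinitetree}, and then roll down to conclude $X \iso Y$. The only difference is cosmetic: you shift the $X$-equation \emph{forward} by $k$ to the base points $(a',b')$, whereas the paper shifts the $Y$-equation \emph{backward} by a $k'$ with $f_B^{k'}(b')=b$ (which requires the observation that $|\pp_A|$ divides $|\pp_B|$, a step your forward-shift version avoids).
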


\begin{proof}
 Let $k'$ be an integer such that $f_B^{k'}(b') = f_B^{k'}(f_B^k(b)) = b$. Since $|\pp_B|$ is a multiple of $|\pp_A|$, this also implies $f_A^{k'}(a') = f_A^{k'}(f_A^k(a)) = a$. This means that $\unroll{B}{b}$ is the unroll of the same FG as in $\unroll{B}{b'}$ but taking the $k'$-th successor of $b$ (\ie, $f_B^{k'}(b)$) as the root, and similarly for $\unroll{A}{a}$ and $\unroll{A}{a'}$. According to the same reasoning, $\unroll{Y}{y'}$ with $y' = f_Y^{k'}(y)$ satisfies $\unroll{A}{a} \unrollp \unroll{Y}{y'} \iso \unroll{B}{b}$. By Theorem~\ref{th:isoinfinitetree}, we have $\unroll{Y}{y'} \iso \unroll{X}{x}$, and so $\roll{\unroll{Y}{y'}}{|\pp_Y|} \iso \roll{\unroll{X}{x}}{|\pp_X|}$ implies $Y\iso X$.
 \end{proof}
 
\begin{figure}
    \begin{center}
    
\begin{tikzpicture}
\node[draw=none,fill=none, scale=2] (per) at (3,0){\textcolor{gray}{$\times$}};
    \node[draw=none,fill=none, scale=2] (eq) at (7.7,0){\textcolor{gray}{$=$}};
\begin{scope}[scale=1]
    \node[draw=none,fill=none, scale=1, anchor=south] (1) at (0.5,0.15) {$v_0$};
    \node[draw=none,fill=none, scale=1, anchor=south] (1) at (-0.5,0.15) {$v_1$};
	\begin{oodgraph}
		\addcycle[nodes prefix = a,color=red]{2};
		\addbeard[attach node = a->1]{2};
		\addbeard[attach node = a->2]{1};
		\addbeard[attach node = a->1->1]{1};
	\end{oodgraph}
\end{scope}
\begin{scope}[xshift=5cm,scale=1]
\node[draw=none,fill=none, scale=1, anchor=south] (1) at (-0.8,-1) {$u_0$};
    \node[draw=none,fill=none, scale=1, anchor=south] (1) at (-0.7,0.5) {$u_1$};
    \node[draw=none,fill=none, scale=1, anchor=south] (1) at (1,0.15) {$u_2$};
	\begin{oodgraph}
		\addcycle[nodes prefix = a,color=red]{3};
		\addbeard[attach node = a->1]{2};
		\addbeard[attach node = a->2]{2};
	\end{oodgraph}
\end{scope}
\begin{scope}[xshift=11cm,scale=1.1]
\node[draw=none,fill=none, scale=0.7, anchor=south] (1) at (0.5,-0.15) {$(v_0,u_0)$};
\node[draw=none,fill=none, scale=0.8, anchor=south] (1) at (1.21,-0.9) {$(v_1,u_1)$};
\node[draw=none,fill=none, scale=0.8, anchor=south] (1) at (-1.21,-0.9) {$(v_0,u_2)$};
\node[draw=none,fill=none, scale=0.7, anchor=south] (1) at (-0.5,-0.15) {$(v_1,u_0)$};
\node[draw=none,fill=none, scale=0.8, anchor=south] (1) at (1.21,0.5) {$(v_1,u_2)$};
\node[draw=none,fill=none, scale=0.8, anchor=south] (1) at (-1.21,0.5) {$(v_0,u_1)$};
	\begin{oodgraph}
		\addcycle[nodes prefix = a,color=red]{6};
		\addbeard[attach node = a->1]{2};
		\addbeard[attach node = a->2]{5};
		\addbeard[attach node = a->3]{8};
		\addbeard[attach node = a->4]{1};
		\addbeard[attach node = a->5]{8};
		\addbeard[attach node = a->6]{5};
		\addbeard[attach node = a->1->2]{3};
		\addbeard[attach node = a->3->8]{3};
		\addbeard[attach node = a->5->6]{1};
	\end{oodgraph}
\end{scope}
\end{tikzpicture}
\end{center}
    \caption{The result of the roll operation on the three unrolls in Figure \ref{fig:prod_unroll} (with $\rho$ equal to $2$, $3$, and $6$ respectively). It can thus be seen that the product $\unrollp$ of the unrolls is equivalent to the product of the FGs. }
    \label{fig:eqroll}
\end{figure}
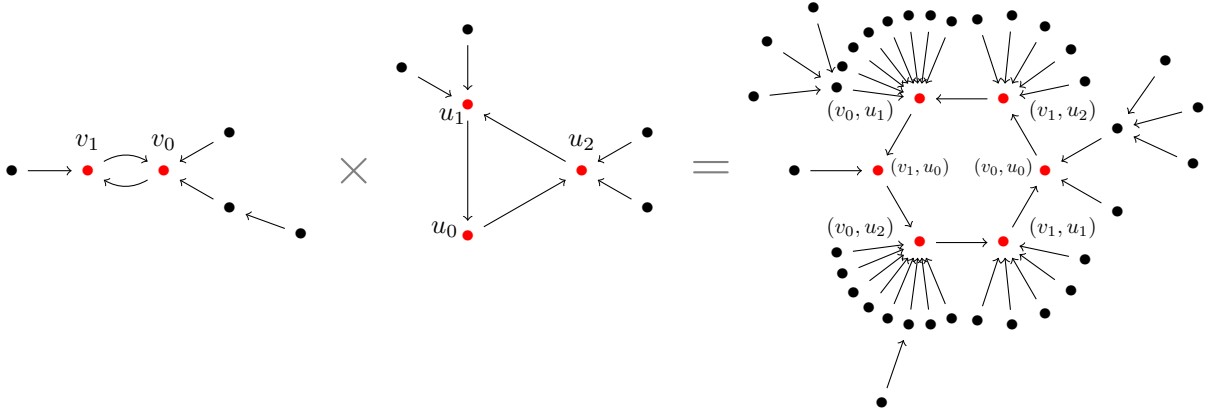

Now, if we try to fix $b'=f_B^k(b)$ and we consider $k'$ such as $f_B^{k'}(b') = f_B^{k'}(f_B^k(b)) = b$, every pairing of $b'$ with an $a\in\pp_A$ will lead, by Lemma \ref{le:isoroll}, to the same solution as fixing $f_B^{k'}(b')=b$ and $f_A^{k'}(a)$, which has already been done since we already considered every $a\in\pp_A$ with $b$. Thus, fixing one $b$ is sufficient to checking all solutions.

 
 
\begin{theorem}\label{upperbound}
For any pair of connected FGs $A,B$ and for any integer $p_X\geq1$ the inequality $A \times X\supseteq B$ admits at most $|\pp_A|$ connected solutions $X$ having $|\pp_X|=p_X$.

\end{theorem}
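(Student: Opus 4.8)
The plan is to convert each connected solution $X$ (with $|\pp_X|=p_X$) into an equation over unrolls anchored at a single fixed cyclic node of $B$, and then use the cancellation property of Theorem~\ref{th:isoinfinitetree} to show that the assignment "$X \mapsto$ a cyclic node of $A$'' is injective on isomorphism classes. First I would dispose of the degenerate case. Since any connected component of $A \times X$ has cycle length $\lcm(|\pp_A|,p_X)$, the inequality~\eqref{DDSineq} can hold only when $\lcm(|\pp_A|,p_X)=|\pp_B|$; if this fails there are no solutions with $|\pp_X|=p_X$ and the bound holds vacuously. So I assume $\lcm(|\pp_A|,p_X)=|\pp_B|$ and fix, once and for all, a cyclic node $b\in\pp_B$.

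Next I would define a map $\psi$ from the isomorphism classes of connected solutions $X$ with $|\pp_X|=p_X$ into $\pp_A$. Given such an $X$, the inequality provides a connected component $C$ of $A\times X$ with $C\iso B$. Recalling that the cyclic nodes of $A\times X$ are exactly the pairs $(a,x)$ with $a\in\pp_A$ and $x\in\pp_X$, I would let $(a_X,x_X)$ be the cyclic node of $C$ that a fixed isomorphism $C\iso B$ sends to $b$, and set $\psi(X)=a_X$. The crucial ingredient here is the product--unroll correspondence: the unroll of $C$ from $(a_X,x_X)$ is precisely $\unroll{A}{a_X}\unrollp\unroll{X}{x_X}$, and since the isomorphism carries $(a_X,x_X)$ to $b$, every solution yields the unroll equation
\[
\unroll{A}{a_X}\unrollp\unroll{X}{x_X}\iso\unroll{B}{b}.
\]

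The heart of the argument is to show that $\psi$ is injective, from which the number of solutions is at most $|\operatorname{image}(\psi)|\le|\pp_A|$. Suppose $\psi(X)=\psi(Y)=a$ for two solutions $X,Y$. Then both satisfy $\unroll{A}{a}\unrollp\unroll{X}{x_X}\iso\unroll{B}{b}$ and $\unroll{A}{a}\unrollp\unroll{Y}{y_Y}\iso\unroll{B}{b}$, so the two left-hand sides are isomorphic. Applying Theorem~\ref{th:isoinfinitetree} with $\itree=\unroll{A}{a}$ cancels the common factor and gives $\unroll{X}{x_X}\iso\unroll{Y}{y_Y}$. Rolling both sides up at length $p_X$ and using $\roll{\unroll{X}{x}}{|\pp_X|}\iso X$ (the property noted after Definition~\ref{def:roll}, valid since $|\pp_X|=|\pp_Y|=p_X$) then yields $X\iso Y$, as required.

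I expect the main obstacle to lie not in the cancellation step but in justifying the second paragraph: that fixing a single $b$ loses no solutions, and that $\psi$ is well defined independently of the choices of component $C$ and of the isomorphism $C\iso B$. Concretely, one must check that every connected solution genuinely produces an unroll equation anchored at the chosen $b$, which is exactly the content of Lemma~\ref{le:isoroll} and the surrounding discussion (different choices of root node $b'=f_B^k(b)$ only reproduce solutions already counted). Once that bookkeeping is settled, Theorem~\ref{th:isoinfinitetree} and the roll identity reduce the whole statement to the clean injectivity argument above.
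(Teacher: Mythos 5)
Your proof is correct and takes essentially the same route as the paper: unroll equations anchored at a single fixed cyclic node $b \in \pp_B$, cancellation of the common factor $\unroll{A}{a}$ via Theorem~\ref{th:isoinfinitetree}, and recovery of $X \iso Y$ by rolling at length $p_X$. The only divergence is in your favour and you slightly undersell it in your last paragraph: since any isomorphism $C \iso B$ pulls the fixed $b$ back to some cyclic node $(a_X,x_X)$ of $C$, every solution is \emph{automatically} anchored at $b$, so neither the appeal to Lemma~\ref{le:isoroll} nor well-definedness of $\psi$ is actually needed — an arbitrary choice of component $C$ and of isomorphism yields a valid $a_X$, and injectivity of the resulting (choice-dependent) map on isomorphism classes is all the count requires, whereas the paper, which enumerates equations over pairs $(a,b)$ rather than over solutions, genuinely does need Lemma~\ref{le:isoroll} to rule out new solutions arising from other roots $b'$.
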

 
 \begin{corollary}
For any pair of connected FGs $A$ and $B$
the inequality $A \times X\supseteq B$ admits
at $|\pp_A|$ connected solutions $X$ with $|\pp_X|=p_x$ for each $p_x \ge 1$ such that $\lcm(|\pp_A|,p_x)=|\pp_B|$.

 \end{corollary}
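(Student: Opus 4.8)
The plan is a counting argument. I would fix one cyclic node $b\in\pp_B$, associate to each $a\in\pp_A$ the unroll equation $\unroll{A}{a}\unrollp\unroll{X}{x}\iso\unroll{B}{b}$, and argue that (i) every connected solution $X$ with $|\pp_X|=p_X$ is captured by at least one such $a$, and (ii) each fixed $a$ yields at most one solution of cycle length $p_X$. Since there are $|\pp_A|$ choices for $a$, this gives the bound.

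For (i), suppose $X$ is a connected solution, so that some component $C$ of $A\times X$ is isomorphic to $B$ via an isomorphism $\bijHom\colon B\to C$. A node of $A\times X$ is cyclic iff both coordinates are, so $\bijHom(b)$ is a pair $(a,x)$ with $a\in\pp_A$ and $x\in\pp_X$. Because the unroll only follows predecessors (which remain inside a single component) and predecessors in a direct product are exactly pairs of predecessors, the unroll of $C$ from $(a,x)$ coincides with $\unroll{A}{a}\unrollp\unroll{X}{x}$; composing with $\bijHom$ gives $\unroll{A}{a}\unrollp\unroll{X}{x}\iso\unroll{B}{b}$. Thus every solution is matched against the fixed $b$ by some $a\in\pp_A$.

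For (ii), suppose two connected solutions $X,Y$ with $|\pp_X|=|\pp_Y|=p_X$ are both matched against the same $a$, say $\unroll{A}{a}\unrollp\unroll{X}{x}\iso\unroll{B}{b}\iso\unroll{A}{a}\unrollp\unroll{Y}{y}$. Theorem~\ref{th:isoinfinitetree} lets me cancel $\unroll{A}{a}$ and conclude $\unroll{X}{x}\iso\unroll{Y}{y}$. Applying the roll of the common length $p_X$ to both in-trees, together with the identity $\roll{\unroll{X}{x}}{|\pp_X|}\iso X$ recorded after Definition~\ref{def:roll}, yields $X\iso Y$. Hence each $a$ contributes at most one solution, and combining (i) and (ii) gives at most $|\pp_A|$ connected solutions with $|\pp_X|=p_X$.

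I expect the delicate point to be (i), namely justifying that committing to a single root $b$ overlooks no solution. The component-isomorphism observation settles this directly, but one could equivalently route it through Lemma~\ref{le:isoroll}: since $B$ is connected every other cyclic node is $b'=f_B^k(b)$, and the lemma shows a solution realized against $b'$ already appears among those counted against $b$. I would also dispatch the degenerate case: if $\lcm(|\pp_A|,p_X)\neq|\pp_B|$ then every component of $A\times X$ has a cycle of the wrong length, so there are no solutions and the bound holds vacuously; the substantive case is precisely $\lcm(|\pp_A|,p_X)=|\pp_B|$, which is the hypothesis under which Lemma~\ref{le:isoroll} operates.
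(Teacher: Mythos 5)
Your proof is correct and takes essentially the same route as the paper: the corollary is just Theorem~\ref{upperbound} applied to each admissible $p_x$ (the $\lcm$ condition being necessary for any solution to exist), and your two steps --- fixing one cyclic node $b$ of $B$, then cancelling $\unroll{A}{a}$ via Theorem~\ref{th:isoinfinitetree} and rolling back to recover $X$ --- are exactly the machinery the paper uses to establish that theorem. Your direct component-isomorphism argument for completeness (every solution is captured against the fixed $b$) is a slightly more explicit variant of the paper's appeal to Lemma~\ref{le:isoroll}, an equivalence you note yourself.
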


An example of inequality where the number of solutions approaches the upper bound of Theorem~\ref{upperbound} (in the case of $\gcd(|\pp_A|, |\pp_X|)>1$) is shown in Figure~\ref{fig:ex_multiplesolutions}.
Note that this example can be generalised to create instances of the inequation $A \times X \supseteq B$ with an arbitrarily large number of solutions for $X$.

A few experiments led us to actually  conjecture that a stronger statement than Theorem~\ref{upperbound} holds when $\gcd(|\pp_A|, |\pp_X|)=1$. 
 
\begin{conjecture}
\label{conjecture}
For any pair of connected FGs $A$ and $B$, the equation $A \times X \iso B$ admits at most one solution.

\end{conjecture}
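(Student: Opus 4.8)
The plan is to recast the uniqueness claim as a \emph{cancellation} property for the direct product of connected FGs and to attack it with the same homomorphism-counting technique used for Theorems~\ref{th:homproduct} and~\ref{th:isofinitetree}, the decisive new ingredient being the coprimality that the hypothesis $A\times X\iso B$ forces. First I would fix the setting. If $X$ and $Y$ are two solutions of $A\times X\iso B$, then in both cases the product is connected, so $\gcd(|\pp_A|,|\pp_X|)=\gcd(|\pp_A|,|\pp_Y|)=1$ and hence $|\pp_X|=|\pp_Y|=|\pp_B|/|\pp_A|$. Thus both solutions share the same cycle length $q$ with $\gcd(q,|\pp_A|)=1$, and it suffices to prove the cancellation statement: for connected FGs with $A\times X\iso A\times Y$ and $\gcd(|\pp_A|,q)=1$, one has $X\iso Y$.

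Next I would move to homomorphism counts over the finite FGs. Since the direct product of Definition~\ref{FGprod} coincides with the categorical product in the category of digraphs, for every digraph $G$ we have $\homo{G,A\times X}=\homo{G,A}\cdot\homo{G,X}$, the finite-FG analogue of Theorem~\ref{th:homproduct}. Combining this with Lóvasz's theorem applied to $A\times X\iso A\times Y$ yields $\homo{G,A}\cdot\homo{G,X}=\homo{G,A}\cdot\homo{G,Y}$ for every $G$. Whenever $\homo{G,A}\neq0$ we cancel and obtain $\homo{G,X}=\homo{G,Y}$; by Lóvasz again, proving $X\iso Y$ amounts to extending this equality to \emph{all} $G$, in particular to those with $\homo{G,A}=0$. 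Here the key auxiliary fact, mirroring the role of Lemma~\ref{lemma:treepath}, is that a connected FG $A$ admits homomorphisms both to and from the directed cycle $C_{|\pp_A|}$ (fold each transient node onto the cycle through its phase $\mathrm{phase}(c)-\mathrm{depth}$, and embed the cycle back), so that $\homo{G,A}\neq0$ iff $\homo{G,C_{|\pp_A|}}\neq0$, i.e.\ iff every directed closed walk of $G$ has length divisible by $|\pp_A|$; I will call such $G$ \emph{$|\pp_A|$-balanced}.

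The crux, and the step I expect to be the genuine obstacle, is the regime $\homo{G,A}=0$, i.e.\ $G$ not $|\pp_A|$-balanced, where the cancellation above carries no information and where, precisely, Theorem~\ref{upperbound} already allows up to $|\pp_A|$ distinct solutions. The plan is to exploit $\gcd(q,|\pp_A|)=1$ via a \emph{balanced cover}: replace $G$ by $\widehat{G}=G\times C_{|\pp_A|}$, whose projection onto $C_{|\pp_A|}$ witnesses $\homo{\widehat{G},A}\neq0$, so that $\homo{\widehat{G},X}=\homo{\widehat{G},Y}$ is already known. The remaining task is to recover $\homo{G,X}$ from $\homo{\widehat{G},X}=\homo{G\times C_{|\pp_A|},X}$, for which there is no off-the-shelf product formula (counting homomorphisms \emph{out of} a tensor product); this inversion is the heart of the matter. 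Intuitively, because $\gcd(q,|\pp_A|)=1$ the two cyclic scales do not interfere, so tensoring a $q$-periodic structure with $C_{|\pp_A|}$ should act invertibly on the relevant hom-profiles, letting one solve back for $\homo{G,X}=\homo{G,Y}$ and finish by Lóvasz.

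Finally I would connect this obstacle to its structural meaning, both as a sanity check and as a second route. Lifting the isomorphism $A\times X\iso A\times Y$ to unrolls as in Lemma~\ref{le:isoroll} gives $\unroll{A}{f_A^t(a_0)}\unrollp\unroll{X}{f_X^t(x_0)}\iso\unroll{A}{f_A^{t+s}(a_0'')}\unrollp\unroll{Y}{f_Y^{t+s}(y_0)}$ for all $t$ and a fixed rotational offset $s$; to cancel the $A$-factor through Theorem~\ref{th:isoinfinitetree} one needs a position $t$ at which the two $A$-roots coincide, which by the single-cycle structure of $\pp_A$ reduces to a congruence in $s$ modulo $|\pp_A|$ that is \emph{independent} of $t$. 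This is exactly the same wall: when $\gcd(|\pp_A|,q)>1$ the nonzero offsets are realisable and produce the genuinely distinct solutions counted by Theorem~\ref{upperbound}, whereas in the coprime case the Chinese Remainder Theorem makes the single cycle of $B$ of length $|\pp_A|\cdot q$ split uniquely into an $|\pp_A|$-periodic and a $q$-periodic pattern, so that no nontrivial re-alignment of $A$ inside $B$ can yield a second, non-isomorphic factor. Making this rigidity precise — equivalently, proving the balanced-cover inversion above — is what I expect to collapse the bound $|\pp_A|$ of Theorem~\ref{upperbound} down to $1$.
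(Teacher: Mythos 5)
Your proposal is a plan, not a proof, and the gap you yourself flag is the entire difficulty of the problem. The setup is sound: two solutions of $A\times X\iso B$ force connected products, hence $\gcd(|\pp_A|,|\pp_X|)=\gcd(|\pp_A|,|\pp_Y|)=1$ and equal cycle lengths, reducing uniqueness to a cancellation statement; the direct product is the categorical product of digraphs, so $\homo{G,A\times X}=\homo{G,A}\cdot\homo{G,X}$, and Lovász then gives $\homo{G,X}=\homo{G,Y}$ for every $G$ with $\homo{G,A}\neq 0$. But for digraphs $G$ with $\homo{G,A}=0$ nothing is concluded, and this regime is not peripheral: it is exactly where cancellation of the direct product over digraphs fails in general (the zero-divisor phenomenon the paper cites from Hammack) and exactly what produces the bound $|\pp_A|$ of Theorem~\ref{upperbound}. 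Moreover, this regime is where the essential information about $X$ lives: taking $G$ to be the directed cycle of length $q=|\pp_X|$, one has $\homo{G,A}=0$ (since $\gcd(q,|\pp_A|)=1$ and $|\pp_A|>1$) while $\homo{G,X}\neq 0$, so the zero case cannot be dismissed as vacuous. The device that rescues the zero case in the paper's in-tree setting has no analogue here: in Theorem~\ref{th:isofinitetree}, Lemma~\ref{lemma:treepath} converts $\homo{G,\ftree}=0$ into $\homo{G,X}=\homo{G,Y}=0$ because all three trees share the same \emph{height}, whereas for finite FGs the obstruction to homomorphisms is divisibility of closed-walk lengths, not height. Your proposed replacement, the ``balanced-cover inversion'' recovering $\homo{G,X}$ from $\homo{G\times C_{|\pp_A|},X}$, is stated as an expectation backed only by the intuition that coprime cyclic scales ``do not interfere''; no mechanism for this inversion is given, and it is precisely the missing mathematics.

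For calibration: the paper contains no proof of this statement either. It is stated as Conjecture~\ref{conjecture}, and the text records that it was proved, during review of an earlier version, by Naquin and Gadouleau using techniques different from homomorphism counting; the paper explicitly lists a homomorphism-counting proof as \emph{future work}. So your plan coincides with the route the authors themselves announce but have not carried out, and as it stands it does not settle the statement: everything up to the acknowledged obstacle is a correct but standard reduction, and the obstacle itself is untouched.
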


During the review process of a previous version of this work, this conjecture was actually proved, using different techniques, by Naquin and Gadouleau~\cite{naquin2022factorisation}.  
However, we plan to try to prove this statement using the same homomorphism counting techniques as Theorem~\ref{upperbound}, as this would give us new insight on the problem and possibly on related ones.
 
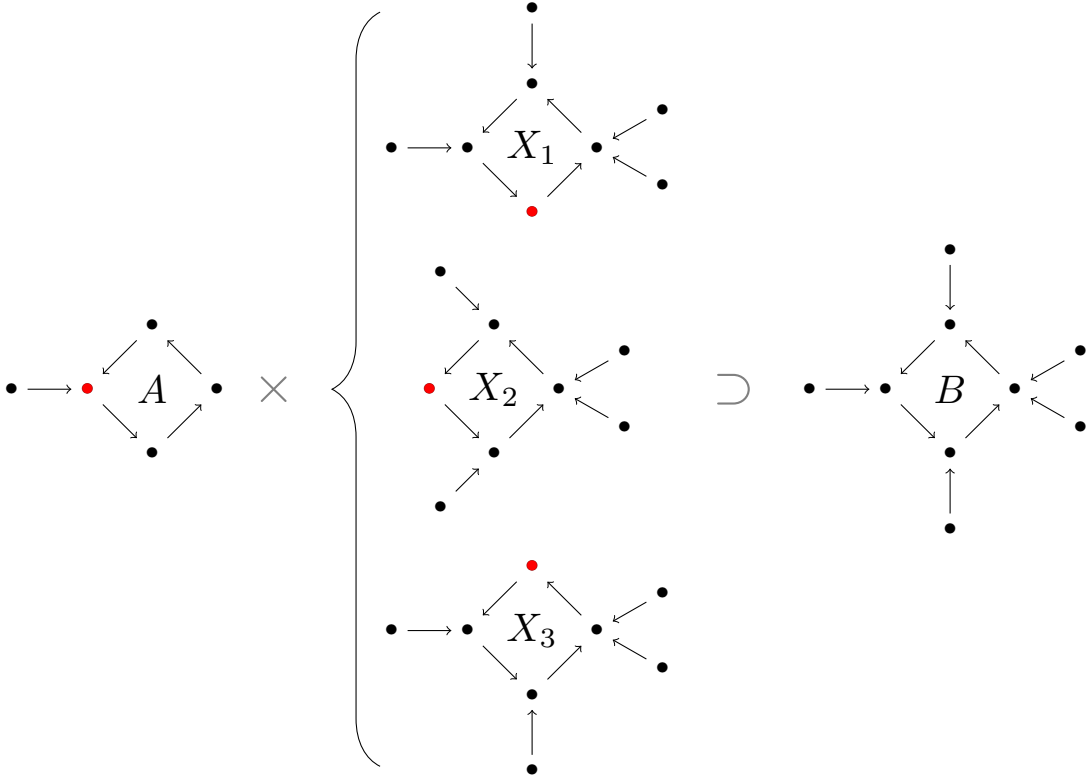
\begin{figure}
    \begin{center}
\begin{tikzpicture}
\begin{scope}[scale=1]
\begin{oodgraph}
	\addcycle[radius=.85cm,xshift=-4.5cm, nodes prefix = a]{4};
	\addcycle[radius=.85cm,xshift=0.5cm,yshift=-3.2cm, nodes prefix = x1]{4};
	\addcycle[radius=.85cm,yshift=0cm, nodes prefix = x2]{4};
	\addcycle[radius=.85cm,xshift=0.5cm,yshift=3.2cm, nodes prefix = x3]{4};
	\addcycle[radius=.85cm,xshift=6cm, nodes prefix = b]{4};
	\addbeard[attach node = a->3]{1};
	
	\addbeard[attach node = x1->1]{2};
	\addbeard[attach node = x1->3]{1};
	\addbeard[attach node = x1->4]{1};
	
	\addbeard[attach node = x2->1]{2};
	\addbeard[attach node = x2->2, rotation angle=45]{1};
	\addbeard[attach node = x2->4, rotation angle=-45]{1};
	
	\addbeard[attach node = x3->1]{2};
	\addbeard[attach node = x3->2]{1};
	\addbeard[attach node = x3->3]{1};
	
	\addbeard[attach node = b->1]{2};
	\addbeard[attach node = b->2]{1};
	\addbeard[attach node = b->3]{1};
	\addbeard[attach node = b->4]{1};
	
\end{oodgraph}
\node[draw=none,fill=none, scale=2,color=gray] at (-2.9,0) {$\times$};
\node[draw=none,fill=none, scale=2,color=gray] at (3.15,0) {$\supset$};

\node[draw=none,fill=none, scale=1.5] at (-4.5,0) {$A$};
\node[draw=none,fill=none, scale=1.5] at (0.5,3.2) {$X_1$};
\node[draw=none,fill=none, scale=1.5] at (0,0) {$X_2$};
\node[draw=none,fill=none, scale=1.5] at (0.5,-3.2) {$X_3$};
\node[draw=none,fill=none, scale=1.5] at (6,0) {$B$};

\draw [decorate,decoration={brace,amplitude=18pt}](-1.5,-5) -- (-1.5,5)  node[]{};

\node[draw=none,fill=none, color=red] at (.5,2.35) {$\bullet$};
\node[draw=none,fill=none, color=red] at (.5,-2.35) {$\bullet$};
\node[draw=none,fill=none, color=red] at (-0.85,0) {$\bullet$};
\node[draw=none,fill=none, color=red] at (-5.35,0) {$\bullet$};

\end{scope}
\end{tikzpicture}
\end{center}
    \caption{An example of an inequation with multiple solutions. The FG $A$, multiplied with one of the $X_i$, generates 4 components, one of which will be isomorphic to $B$. For one $X_i$, the cyclic alignment resulting in $B$ will be the one overlaying the cyclic node with a transient in $A$ with the cyclic node without nodes in $X_i$ (\ie, the red nodes). Note that this example can be generalised to create instances of the inequation $A \times X \supseteq B$ with an arbitrarily large number of solutions for $X$.}
    \label{fig:ex_multiplesolutions}
\end{figure}

\subsection{A polynomial algorithm for basic equations over t-abstractions}\label{poly}

In this section, we introduce a polynomial-time algorithm (Algorithm~\ref{algpoly}) to find all $\matrixt{X}{}$ (if they exist) such that $\matrixt{A}{} \times \matrixt{X}{} \supseteq \matrixt{B}{}$. Recall that according to Proposition~\ref{propprodgeneral}, the product of $\matrixt{A}{}$ and  $\matrixt{X}{}$ generates a multiset, in which we want $\matrixt{B}{}$ to be one of the elements. The algorithm takes the t-abstraction of two connected FGs $A$ and $B$ and a value $p_X$ (an admissible length of the cycle of $X$) to reconstruct inductively the possible t-abstractions of $X$. Remark that $p_X$ can be any positive natural number such that $\lcm(|\pp_A|, p_X)= |\pp_B|$. 

By Theorem~\ref{upperbound}, we have at most $|\pp_A|$ solutions (the upper bound to the number of solutions for FGs also applies for t-abstractions). Then, we can take the matrix $\matrixt{B}{}$ as it is and try to reconstruct $\matrixt{X}{}$ for each cyclic permutation of the lines of $\matrixt{A}{}$.

\medskip

The algorithm goes through every element $\matrixt{B}{}$, column by column, to compute $\matrixt{X}{}$ column by column. 
According to Proposition \ref{propprodgeneral}, we know that: $\matrixt{B}{r,h}=\matrixt{X}{r,h} \otimes M_1 + M_2$, where $M_1=\sum_{j=0}^{h-1} \matrixt{A}{r-j+i-1,h-j}$ and $M_2=\matrixt{A}{r+i-1,h} \otimes (\sum_{j=1}^{h-1} \matrixt{X}{r-j,h-j})$. 
For each $\matrixt{B}{\indexp,\tra}$, the $M_2$ is known since all $\matrixt{X}{r',h'}$ (for all $h'\in\set{1,\ldots,h-1}$ and $r'\in\set{0,\ldots,p_X-1}$) have already been computed in previous steps of the algorithm. Note that, due to how the matrix is defined, it is always possible to know the expected cardinality of a $\matrixt{X}{r,h}$.
In fact, for $h>2$, the sum of the values within $\matrixt{X}{r,h-1}$ tells us the number of nodes (thus the number of elements) in multiset $\matrixt{X}{r,h}$ (see Figure \ref{fig:tabs}).
This information is exploited in the algorithm thanks to the variable \textit{expectedCardinalities} (see Section~\ref{poly}).


At this point, the goal is to compute $\matrixt{X}{r,h}$ such that $\matrixt{B}{r,h} - M_2=\matrixt{X}{r,h} \otimes M_1$.
Finding such a $\matrixt{X}{r,h}$ can be done as follows. Let $M_3$ be the multiset $\matrixt{B}{r,h} - M_2$, and $d$, $d'$ be the maximum elements of respectively $M_1$ and $M_3$. Then, if $\matrixt{B}{}$ does indeed satisfy the product $\matrixt{A}{} \times \matrixt{X}{}$, there must exist a $y \in \matrixt{X}{r,h}$ which satisfies $d \cdot y=d'$, so we know that $\frac{d}{d'}$ is an element of $\matrixt{X}{r,h}$. We can save it and update $M_3$ to be $M_3-\multiset{\frac{d}{d'}}\otimes M_1$. We continue until either $M_3$ is empty or it contains only zeros. In this case, all the missing $y$ must be zeros too. In this last case, $\frac{|M_3|}{|M_1|}$ zeros must be added to $\matrixt{X}{r,h}$. This procedure computes all elements of $\matrixt{X}{r,h}$ in a non-ambiguous manner. We will call \emph{msDivision} the process of finding $\matrixt{X}{r,h}$ such that $M_3=\matrixt{X}{r,h} \otimes M_1$.

\medskip

Let us point out some important aspects. As for integer division, we need to consider the case where the denominator is zero (\ie, $M_1$ filled with only zeros). For such a case, one is led to think that it is impossible to define with certainty our $\matrixt{X}{r,h}$ since every multiset with the suitable cardinality would satisfy the equation. Fortunately, this particular scenario cannot happen in our case because $M_1$, as defined above, contains the multiset $\matrixt{A}{r-h+1,1}$ which necessarily contains a positive integer.

It may happen, when we go through the $\matrixt{B}{\indexp,\tra}$
with $\indexp > p_X-1$, that the corresponding $\matrixt{X}{\indexp,\tra}$ has already been computed. 
This is normal, since the rows of $\matrixt{X}{}$ are considered modulo $p_X$.
In this case, we only check if the product holds (an example of this mechanism is shown below).

To conclude, if a contradiction occurs at any point of the algorithm, the reconstruction of the current solution is stopped, and the algorithm tries out another cyclic permutation of the lines of $\matrixt{A}{}$ (\ie, another $i$). Such contradictions occur, for instance, when we compute a difference between two multisets and the second one is not included in the first one, when the computed value $\frac{d}{d'}$ is not an integer, or when the cardinality of the multiset returned by $msDivision$ does not match the expected one. 

\medskip

An implementation of the approach just outlined is presented in Algorithm \ref{algpoly}.
\subsubsection{A worked-out example}
\label{ex:poly}

In this example we go over the details of Algorithm~\ref{algpoly} and we will explain how to reconstruct the $\matrixt{X}{}$ up to its second column (\ie, the two red ones) with the data provided below.

\definecolor{brick}{RGB}{170,7,7}
\begin{figure}[h!]
\centering
\begin{tikzpicture}

\node[scale=1.2]() at (-7,1.5){$\matrixt{A}{}=$};
\node[scale=1.2]() at (-.75+1,1.5){$\matrixt{X}{}=$};
\node[scale=1.2]() at (-7,-1.5){$\matrixt{B}{}=$};

\node (0) at (-4.5,1.5) {
\resizebox{!}{!}{
$\begin{blockarray}{cccc}
 & 1 & 2 & 3  \\
\begin{block}{c(ccc)}
  0 & [2] &       [0] &  \emptyset  \\
  1 & [5] & [0,0,0,1] & [0] \\
\end{block}
\end{blockarray}$}};

\node (1) at (3+1,1.5) {
\resizebox{!}{!}{
$\begin{blockarray}{ccccccc}
 &              1 &        2 &              3 &        4 &              5 &       6  \\
\begin{block}{c(cccccc)}
0 & \textcolor{brick}{[2]} &     \textcolor{brick}{[2]} & \textcolor{gray}{[0,1]} & \textcolor{gray}{[2]} & \textcolor{gray}{[0,1]} & \textcolor{gray}{[0]} \\
1 & \textcolor{brick}{[4]} & \textcolor{brick}{[0,0,0]} &      \textcolor{gray}{\emptyset} &    \textcolor{gray}{\emptyset} &      \textcolor{gray}{\emptyset} &    \textcolor{gray}{\emptyset} \\
2 & \textcolor{brick}{[3]} &   \textcolor{brick}{[0,1]} &   \textcolor{gray}{[0]} &    \textcolor{gray}{\emptyset} &      \textcolor{gray}{\emptyset} &    \textcolor{gray}{\emptyset}\\
\end{block}
\end{blockarray}$}};

\node (2) at (0.5,-1.5) {
\resizebox{.85\textwidth}{!}{
$\begin{blockarray}{ccccccc}
  &             1 &                                                2 &                            3 &                    4 &                          5 &                    6 \\
\begin{block}{c(cccccc)}
0 &  \multiset{4} &                                \multiset{0,0,10} &                 \multiset{2} &         \multiset{1} & \multiset{0,0,0,0,0,0,0,0} &       \multiset{0,0} \\
1 & \multiset{20} & \multiset{0,0,0,0,0,0,0,0,0,0,0,0,0,0,0,0,0,0,2} &               \multiset{0,0} &            \emptyset &                  \emptyset &            \emptyset \\
2 &  \multiset{6} &                             \multiset{0,0,0,0,5} &         \multiset{0,0,0,0,0} &            \emptyset &                  \emptyset &            \emptyset \\
3 & \multiset{10} &                     \multiset{0,0,0,0,0,0,2,3,4} & \multiset{0,0,0,0,0,0,0,0,5} & \multiset{0,0,0,2,4} &     \multiset{0,0,0,0,0,5} & \multiset{0,0,0,0,0} \\
4 &  \multiset{8} &                         \multiset{0,0,0,0,0,0,0} &                    \emptyset &            \emptyset &                  \emptyset &            \emptyset \\
5 & \multiset{15} &           \multiset{0,0,0,0,0,0,0,0,0,0,0,1,2,4} &     \multiset{0,0,0,0,0,0,0} &            \emptyset &                  \emptyset &            \emptyset \\
\end{block}
\end{blockarray}$}};

\end{tikzpicture}
\end{figure}

At the beginning of the algorithm the only information known about $\matrixt{X}{}$ is that $p_X$ equals $3$. We thus start by initialising a matrix with $3$ lines and $6$ columns (the number of columns of $\matrixt{B}{}$) filled with empty multisets. The \textit{expectedCardinalities} array, which stores at any time the mandatory cardinalities of the multisets at the next layer, is filled with $p_X$ ones. We will start by considering $i=1$.

For the first layer (\ie, $\tra=1$), let us consider the computation made by the algorithm for $r$ up to $2$:
\begin{align*}
    r=0 & \rightarrow M_2 = \matrixt{A}{0,1} \otimes \emptyset = \multiset{2}\otimes\emptyset , \matrixt{X}{0,1}=msDivision(\matrixt{B}{0,1}-\emptyset,\matrixt{A}{0,1})=msDivision(\multiset{4},\multiset{2})=\multiset{2}\\
    r=1 & \rightarrow M_2 = \multiset{5} \otimes \emptyset ,\matrixt{X}{1,1}=msDivision(\multiset{20},\multiset{5})=\multiset{4}\\
    r=2 & \rightarrow M_2 = \multiset{2} \otimes \emptyset ,\matrixt{X}{2,1}=msDivision(\multiset{6},\multiset{2})=\multiset{3}
\end{align*}

Since we are on the first layer, \textit{expectedCardinalities} is updated to be the sum of the computed multisets minus $1$ (since the cyclic preimage will be not considered). We will thus have $[1,3,2]$.
 
From $r=3$, the algorithm will now enter the verification phase and we will have:

\begin{align*}
    r=3 & \rightarrow \matrixt{A}{1,1} \otimes \matrixt{X}{0,1} = \matrixt{B}{3,1} , \multiset{5}\otimes\multiset{2} = \multiset{10}\\
    r=4 & \rightarrow \matrixt{A}{0,1} \otimes \matrixt{X}{1,1} = \matrixt{B}{4,1} , \multiset{2}\otimes\multiset{4} = \multiset{8}\\
    r=5 & \rightarrow \matrixt{A}{1,1} \otimes \matrixt{X}{2,1} = \matrixt{B}{5,1} , \multiset{5}\otimes\multiset{3} = \multiset{5}
\end{align*}

For the second layer, the algorithm proceeds in the same way:

\begin{align*}
    r=0 & \rightarrow M_2 = \matrixt{A}{0,2} \otimes \matrixt{X}{2,1} = \multiset{0}\otimes\multiset{3} ,\matrixt{X}{0,2}=msDivision(\matrixt{B}{0,2}-\multiset{0},\matrixt{A}{1,1})=msDivision(\multiset{0,10},\multiset{0,5})=\multiset{2}\\
    r=1 & \rightarrow M_2 = \multiset{0,0,0,1} \otimes \multiset{2} ,\matrixt{X}{1,2}=msDivision(\multiset{0,0,0,0,0,0,0,0,0,0,0,0,0,0,0},\multiset{0,0,0,1,2})=\multiset{0,0,0}\\
    r=2 & \rightarrow M_2 = \multiset{0} \otimes \multiset{4} ,\matrixt{X}{2,2}=msDivision(\multiset{0,0,0,5},\multiset{0,5})=\multiset{0,1}
\end{align*}

No error is triggered since the cardinalities of the computed multisets match the expected ones. The \textit{excpectedCardinalites} array is then updated to be $[2,0,1]$. We can again check if these multisets work for the remaining lines.

\begin{align*}
r=3 \rightarrow& \matrixt{A}{1,2} \otimes \matrixt{X}{0,2} + \matrixt{A}{0,1} \otimes \matrixt{X}{0,2} + \matrixt{A}{1,2} \otimes \matrixt{X}{2,1} = \matrixt{B}{3,2}\\
&\multiset{0,0,0,1}\otimes\multiset{2}+\multiset{2}\otimes\multiset{2}+\multiset{0,0,0,1}\otimes\multiset{3} = \multiset{0,0,0,0,0,0,2,3,4}\\
r=4 \rightarrow& \matrixt{A}{0,2} \otimes \matrixt{X}{1,2} + \matrixt{A}{1,1} \otimes \matrixt{X}{1,2} + \matrixt{A}{0,2} \otimes \matrixt{X}{0,1} = \matrixt{B}{4,2}\\
&\multiset{0}\otimes\multiset{0,0,0}+\multiset{5}\otimes\multiset{0,0,0}+\multiset{0}\otimes\multiset{2} = \multiset{0,0,0,0,0,0,0}\\
r=5 \rightarrow& \matrixt{A}{1,2} \otimes \matrixt{X}{2,2} + \matrixt{A}{0,1} \otimes \matrixt{X}{2,2} + \matrixt{A}{1,2} \otimes \matrixt{X}{1,1} = \matrixt{B}{5,2}\\
&\multiset{0,0,0,1}\otimes\multiset{0,1}+\multiset{2}\otimes\multiset{0,1}+\multiset{0,0,0,1}\otimes\multiset{4} = \multiset{0,0,0,0,0,0,0,0,0,0,0,1,2,4}
\end{align*}

Since no contradiction is found, the algorithm can go on.

\begin{algorithm}[!ht]
\caption{Polynomial t-abstraction}
\label{algpoly}
\SetKwInOut{Input}{Input}\SetKwInOut{Output}{Output}
\Input{$\matrixt{A}{}$, $\matrixt{B}{}$ matrices of integer multisets, and $p_X$ positive natural number}
\Output{A list of solutions for $\matrixt{X}{}$}
 $solutions \newv{\gets} list()$\;
 \ForAll{i $\in \set{1, \ldots, \mid \cycle_A \mid}$}{
        $h\newv{\gets}1$\;
        $complete\newv{\gets}false$\;
        $error\newv{\gets}false$\;
        $expectedCardinalities\newv{\gets} [1] * p_X$\;
        $\matrixt{X}{} \newv{\gets} emptyMatrix(p_X,h_B^{\max})$\;
        \While{$complete=false \text{ \textbf{and} } error=false$}{ 
        $complete\newv{\gets}true$\;
            \ForAll{r $\in \set{0, \ldots, \mid \cycle_B \mid-1}$}{
                \eIf{$r< p_X$}{
                    \If{$expectedCardinalities[r]=0$}{
                        \textbf{continue}\;
                    }
                    $M_2 \newv{\gets} \matrixt{A}{r+i-1,h} \otimes (\sum_{j=1}^{h-1} \matrixt{X}{r-j,h-j})$\;
                    $error,\matrixt{X}{r,h}\newv{\gets}msDivision(\matrixt{B}{r,h} - M_2, \sum_{j=0}^{h-1} \matrixt{A}{r-j+i-1,h-j})$\;
                    $nextCardinality \newv{\gets} sum(\matrixt{X}{r,h})$\;
                    $error \newv{\gets}error \lor (\mid \matrixt{X}{r,h} \mid \neq expectedCardinalities[r])$\;
                    \eIf{$h=1$}{
                        $expectedCardinalities[r] \newv{\gets} nextCardinality -1$\;
                    }{
                        $expectedCardinalities[r] \newv{\gets} nextCardinality$\;
                    }
                    
                    \If{$nextCardinality>0$}{
                         $complete\newv{\gets}false$\;
                    }
                }{
                    $R\newv{\gets}\matrixt{A}{r,h}\otimes(\sum_{j=0}^{h-1} \matrixt{X}{r-j+(i-1),h-j})+ \matrixt{X}{r+(i-1),h}\otimes(\sum_{j=1}^{h-1} \matrixt{A}{r-j,h-j})$\;
                    $error\newv{\gets}(\matrixt{B}{r,h}\neq R)$\;\label{algpoly:check}
                }
                \If{$error$}{
                    \textbf{break}\;
                }
            }
            $h\newv{\gets}h+1$\;
        }
        \If{$!error$}{
            $solutions.append(\matrixt{X}{})$\;
        }
 }
 \Return{solutions}\;
\end{algorithm}

\subsubsection{Complexity}
Each update of $M_3$ in $msDivision$ requires linear time \wrt the size of $M_3$. We repeat it for each element of $\matrixt{X}{r,h}$, namely, $\frac{|M_3|}{|M_1|}$ times, with $|M_1|$ possibly equal to $1$. Therefore, \emph{msDivision} complexity is in $O({|M_3|}^2)$.
Due to the definition of the t-abstraction, we know that $\matrixt{B}{}$ contains $| \setst_B |$ integer values partitioned into the different $\matrixt{B}{\indexp,\tra}$. Since the $msDivision$ is applied to all $M_3$ (or $\matrixt{B}{\indexp,\tra} - M_2$ with $M_2$ potentially empty), the worst case is when $msDivision$ is applied to one $M_3$ containing $|\setst_B|$ elements. If we consider also that we go through the matrix, for a certain $i$, the complexity is in  $O(|\pp_B|\cdot\tra_B^{\max}+{|\setst_B|}^2)$.
Since we evaluate $i$ from $0$ to $|\pp_A|-1$, the total complexity of the algorithm is $O(| \pp_A | \cdot (|\pp_B|\cdot\tra_B^{\max}+{| \setst_B |}^2))$. Note that this algorithm does not ensure that the corresponding equation over FGs has a solution. However, it ensures that if the latter has solutions, they satisfy one of the t-abstractions found by the algorithm. 
Nevertheless, if a valid t-abstraction is not found for $X$, one can conclude that the original Equation~\eqref{DDSineq} is impossible.

\subsection{An exponential algorithm for basic equations}\label{exp}

This section introduces an exponential-time algorithm (Algorithm~\ref{algexpG}) which takes
as input the FGs $A$ and $B$ and an additional integer $p_X$,
and outputs all $X$ which satisfy $A \times X \supseteq B$.
The polynomial-time algorithm of the previous section can find 
all $\matrixt{X}{}$ such that $\matrixt{A}{} \times \matrixt{X}{} \supseteq \matrixt{B}{}$, if they exist. Unfortunately, these solutions lack information to completely reconstruct the dynamics of $X$. Since the multisets are not ordered, we cannot know which indegree of a $\matrixt{X}{\indexp,\tra}$ is related to which node in $\tpmatrix{X}{\indexp,\tra-1}$. Figure~\ref{fig:2dds_1abs} illustrates this potential ambiguity. Each solution $\matrixt{X}{}$ identifies an equivalence class of FGs respecting the abstraction. This version of the algorithm is an evolution of the polynomial one and leads us to identify which of these FGs are solutions of the original equation $A \times X \supseteq B$.

\subsubsection{Naive reconstruction}
The most intuitive way to reconstruct the graph $X$, starting from $\matrixt{X}{}$ (computed by Algorithm~\ref{algpoly}), is to test all the possible connected FGs satisfying the t-abstraction. This corresponds to testing all the ways to connect the elements of $\matrixt{X}{\indexp,\tra}$ to the elements of $\tpmatrix{X}{\indexp,\tra-1}$ (level by level).
Since $|\matrixt{X}{\indexp,\tra}|=|\tpmatrix{X}{\indexp,\tra-1}|$, for a level $\tra$ and an index $\indexp$ of a node on the cycle, we have $|\matrixt{X}{\indexp,\tra}|!$ possibilities. Then, there are $\prod_{(\indexp,\tra)\in\set{0,\ldots,p_X-1}\times\set{1,\ldots,h^{\max}_X}}^{} (|\matrixt{X}{\indexp,\tra}|!) $ possible systems (not up to isomorphism).
However, we know that the polynomial algorithm returns at most one t-abstraction solution for each cyclic permutation of $\matrixt{A}{}$ (\ie, one for each possible cyclic alignment of $A$ on $B$). Then, according to Theorem~\ref{upperbound}, for each $\matrixt{X}{}$ returned by the polynomial method, there is just one possible graph that is a solution of the basic equation over FGs. To verify if a graph is a solution, one can check if, in the graph obtained from the product of itself with $A$, the component corresponding to the alignment considered before is isomorphic to $B$.

\subsubsection{An improved approach}
Knowing the full dynamics of $A$ and $B$ allows us to associate each integer in a multiset (\ie, each incoming degree) of $\matrixt{A}{}$ or $\matrixt{B}{}$ with the corresponding node in the graph. In practice, this can be done by providing the multisets with an order such that the $k$-th node in a $\tpmatrix{A}{\indexp,\tra}$ has the corresponding $k$-th indegree in $\matrixt{A}{\indexp,\tra+1}$ (the same applies for $B$). In the same way as the polynomial algorithm, this version will go through every layer $\tra$ of the transients to reconstruct $X$ layer by layer.

At first, $X$ consists only of a cycle of length $p_X$.
At level $\tra=1$, the multisets computed as in the polynomial algorithm are sufficient to reconstruct the dynamics of the first layer of $X$. Indeed, every $\matrixt{X}{\indexp,1}$ has only one element $d$. Then, we just attach, to the $r$-th cyclic node, $d$ new nodes.
At $\tra=2$, the multisets derived from the polynomial algorithm are still sufficient to reconstruct the dynamics because all nodes in $\tpmatrix{X}{\indexp,1}$ are equivalent from a dynamics point of view. Then, for each $d\in\matrixt{X}{\indexp,2}$, we assign $d$ predecessors to an arbitrary node in $\tpmatrix{X}{\indexp,1}$ (which must still be a leaf).
Now, the nodes in $\tpmatrix{X}{\indexp,2}$ are potentially no longer equivalent. 
For this reason, starting from $\tra=3$, the algorithm will make assumptions on which node of $X$ at the layer $\tra$ has generated which node of $B$ at the same layer. These assumptions allow us to reconstruct the next layer of $X$ without ambiguity. We will call these assumptions the \emph{origins} of the nodes of $B$. From a graph point of view, the origins will be only labels on nodes of $X$ and $B$ such that all the nodes in $B$ with origin $\ori$ are supposed to be generated by the node with label $\ori$ in $X$. Let $\oris_{\indexp,\tra}$ be the set of origins of the nodes in a certain  $\tpmatrix{B}{\indexp,\tra}$.

\medskip

We now consider the computation of a generic layer $\tra\geq3$. Let us suppose that we know the origins of the nodes in $\tpmatrix{B}{\indexp,\tra-2}$ (we will see later how origins are initialised). With this information, we can partition the multiset $\matrixt{B}{\indexp,\tra}$ into sub-multisets depending on the origin of the nodes in $\tpmatrix{B}{\indexp,\tra-2}$. Let $\tpmatrix{B}{\indexp,\tra|\ori}$ be the set of all $\nodedds\in\tpmatrix{B}{\indexp,\tra}$ such that $f_B(\nodedds)$ has origin $\ori$.  Then, let $\matrixt{B}{\indexp,\tra|\ori}$ be the multiset of indegrees of nodes $\nodedds\in\tpmatrix{B}{\indexp,\tra-1|\ori}$. According to these definitions, $\matrixt{B}{\indexp,\tra}=\sum_{\ori\in\oris_{r,h-2}}\matrixt{B}{r,h|\ori}$. We can also partition the multiset of unknowns $\matrixt{X}{\indexp,\tra}$, as $\matrixt{X}{\indexp,\tra}=\sum_{\ori\in\oris_{r,h-2}}\matrixt{X}{r,h|\ori}$. Consequently, we can rewrite the equivalence of Proposition \ref{propprodgeneral} as follows:
\begin{equation}\label{epprodorigins}
    \sum_{\ori\in\oris_{r,h-2}}\matrixt{B}{r,h|\ori}=\sum_{\ori\in\oris_{r,h-2}}\left(\matrixt{X}{r,h|\ori}\otimes\sum_{j=0}^{h-1}\matrixt{A}{r-j,h-j}\right)+\matrixt{A}{r,h}\otimes\sum_{j=1}^{h-1} \matrixt{X}{r-j,h-j}.
\end{equation}

Note that there are some nodes in $\tpmatrix{B}{\indexp,\tra-1}$ which are not generated from a node in $\tpmatrix{X}{\indexp,\tra-1}$. These are the nodes of a level of $B$ that come from the multiplication of lower levels of $X$ with the last level of $A$.
The indegrees of these nodes are the ones found in the second term (\ie, $\matrixt{A}{r,h}\otimes\sum_{j=1}^{h-1} \matrixt{X}{r-j,h-j}$), which we denote $M_2$ as in the previous section.
These nodes are marked with a special origin (noted $-1$).  
Then, we should always have $\matrixt{B}{\indexp,\tra|-1}=M_2$ (line \ref{algexp:checkMinusOne} Algorithm \ref{algexp}). Let us point out that, according to this reasoning, we indeed have that all the nodes attached to a node with $-1$ origin will also have $-1$ as origin. 

Considering the definition of $M_1$ (presented in Section \ref{poly}) and Equation \eqref{epprodorigins}, we can now decompose the product formula into smaller ones.
We thus have $\matrixt{B}{\indexp,\tra|\ori}=\matrixt{X}{\indexp,\tra|\ori} \otimes M_1$ for all $\ori\in\oris_{\indexp,\tra-2}\setminus\set{-1}$. At this point, we can compute each $\matrixt{X}{\indexp,\tra|\ori}$ with $msDivision$. 
Once $\matrixt{X}{\indexp,\tra|\ori}$ is known, we have to assign to each node of $\tpmatrix{X}{\indexp,\tra-1|\ori}$ one degree (\ie, a set of predecessors) from $\matrixt{X}{\indexp,\tra|\ori}$. However, we can notice that we brought the problem back to the special case of $\tra=2$. Indeed, each node of $\tpmatrix{X}{\indexp,\tra-1|\ori}$ is connected to the same node (the only one with origin $\ori$). This means that we can again, for each $d\in\matrixt{X}{\indexp,\tra|\ori}$, attach $d$ new nodes to an arbitrary node of $\tpmatrix{X}{\indexp,\tra-1|\ori}$ (line \ref{algexp:attachGeneralCase} Algorithm \ref{algexp}). This mechanism is the key to reconstructing $X$ layer by layer without ambiguity.

Incidentally, each time we add a $k$-th new node in $X$, we set its label equal to $k$. In Algorithm \ref{algexp}, this is handled by the $attachNewNodes$ method which, in addition to creating the nodes, assigns to them the corresponding origin\footnote{The $cyclicGraph$ function Algorithm \ref{algexp} automatically assigns origins while creating the cyclic nodes of $X$.}. Moreover, it returns $true$ if at least one node is created. Hence, this ensures that all nodes in $X$ have a unique label. The returned value, for its part, allows us to know when no more new nodes are added in $X$ and thus when $X$ has been completely reconstructed.

\medskip

There still remains an issue: how to assign origins to the nodes in the layer of $B$. Up to now, we assigned to each node in $\tpmatrix{X}{\indexp,\tra-1}$ a number of predecessors. This means that we also know for each node $v\in\tpmatrix{X}{\indexp,\tra-1|\ori}$ all the correlated degrees in $\matrixt{B}{\indexp,\tra|\ori}$. Indeed, if $v$ has an incoming degree $d$, there is a sub-multiset $M_v=\multiset{d}\otimes M_1$ included in $\matrixt{B}{\indexp,\tra|\ori}$.
We must then apply the origin of $v$ as the origin of all corresponding nodes in $\tpmatrix{B}{\indexp,\tra-1|\ori}$. 
In other words, for each element $d'$ in $M_1$, we assign the origin of $v$ to any node $w$ in $\tpmatrix{B}{\indexp,\tra-1|\ori}$ that has $d\cdot d'$ predecessors.
However, several choices may be possible.
In fact, two nodes in $\tpmatrix{B}{\indexp,\tra-1|\ori}$ can have the same number of predecessors, and both can be valid choices at this point\footnote{The $enumerateAssignments$ function takes as parameters a list of constraints over origins. The constraints are triplets made of (\emph{i}) the name of the origin, (\emph{ii}) the multiset $M_v$ of indegrees on which the origins must be assigned, and (\emph{iii}) a set of nodes among which we can assign it. It then outputs all the valid assignments (hashmaps linking nodes to origins).}. 
A possible approach is backtracking. Namely, we choose one and we try to reconstruct the layers above (lines \ref{algexp:enumerateAssignments} and~\ref{algexp:recursiveCall} of Algorithm \ref{algexp}). 
If the reconstruction of an above layer fails, the algorithm backtracks to the last choice made and tries another origin assignment (lines \ref{algexp:backtrack1} and \ref{algexp:backtrack2}). The enumeration of all valid choices for each layer is the cause of the exponential time complexity. In a following section, we will present a better approach to deal with this problem.

In any case, at this point, we have an almost complete technique for assigning origins to nodes. What remains to be understood is how the special origin $-1$ is initialised.
The first layer where the $M_2$ term appears is at height $\tra=2$. The origins are not needed here to construct $X$ but must be assigned for the layer $\tra=3$. 
We proceed as explained for $h=2$, and then we only assign a $-1$ origin to all the nodes in $\tpmatrix{B}{\indexp,1}$ remaining without origins.

\medskip

For the verification part, at each height $h$ after having reconstructed the layer of $X$, we check that $A \times X$ (considered only up to layer $h$) contains $B$ (again only up to height $h$). The \emph{truncation} of the graphs, consisting of returning subgraphs by removing transient nodes in higher levels, is done by the function $truncate$ in Algorithm~\ref{algexp}.

\medskip 

As for the polynomial algorithm, we actually repeat this whole mechanism for all cyclic alignments (Algorithm~\ref{algexpG}). Let us point out that according to the results of Section \ref{cancellation}, after finding a solution for a cyclic alignment (\ie, for a value $i$), the algorithm starts calculating a new solution for another alignment (even if not all possible origin assignments were explored via backtrack).

\begin{algorithm}[!ht]
\caption{Exponential equation solver}
\label{algexpG}
\SetKwInOut{Input}{Input}\SetKwInOut{Output}{Output}
\Input{$A$, $B$ connected functional graphs, and $p_X$ positive natural number}
\Output{A list of solutions for $X$}
 $solutions \newv{\gets} list()$\;
 \ForAll{i $\in \set{1, \ldots, \mid \cycle_A \mid}$}{
    $X\newv{\gets}cyclicGraph(p_X)$ \Comment*[r]{\small $cyclicGraph$ also assign new origins on $X$}
    \If{$findSolution(i,A,B,X,1)$}{
        $solutions.append(clone(X))$ \Comment*[r]{\small A copy of $X$ is saved as a solution}
    }
}
 \Return{solutions}\;
\end{algorithm}

\begin{algorithm}[!ht]
\caption{findSolution}
\label{algexp}
\SetKwInOut{Input}{Input}\SetKwInOut{Output}{Output}
\Input{$i$ positive natural number, $A$, $B$, $X$ connected functional graphs and $h$ positive natural number}
\Output{$true$ if a solution has been reconstructed, $false$ otherwise\\($X$ is modified to be the solution, if it exists, and is not returned)}
$originsData \newv{\gets} list()$ \Comment*[r]{\small list giving, for a origin, the corresponding nodes in $X$ and $B$}
$nodeAttached \newv{\gets} false$\;
\ForAll{$r \in \set{1,\ldots,|\cycle_X|}$}{
    \If{$h=1$}{
        $error,\matrixt{X}{r,1} \newv{\gets} msDivision(\matrixt{B}{r,1},\matrixt{A}{r+i-1,1})$\;
        $nodeAttached \newv{\gets} nodeAttached \lor attachNewNodes(g_X(r),\matrixt{X}{r,1}[0]-1)$\;
        $originsData.append\big( \,(getOrigin(g_X(r)),\matrixt{B}{r,1},\multiset{g_B(r)}) \, \big)$\;
    }
    \If{$h=2$}{
        $M_1 \newv{\gets} \matrixt{A}{r+i-2,1}+\matrixt{A}{r+i-1,2}$\;
        $error,\matrixt{X}{r,2} \newv{\gets} msDivision(\matrixt{B}{r,2}-(\matrixt{A}{r+i-1,2}\otimes\matrixt{X}{r-1,1}),M_1)$\;
        \ForAll{$k \in \set{0,\ldots,|\matrixt{X}{r,2}|-1}$}{
            $nodeAttached \newv{\gets} nodeAttached \lor attachNewNodes(\tpmatrix{X}{r,1}[k],\matrixt{X}{r,2}[k])$\;
            $originsData.append\big( \, (getOrigin(\tpmatrix{X}{r,1}[k]),\matrixt{X}{r,2}[k] \otimes M_1,\tpmatrix{B}{r,1}) \, \big)$\;
        }
        $originsData.append\big( \, (-1,\matrixt{A}{r+i-1,2} \otimes \matrixt{X}{r-1,1},\tpmatrix{B}{r,1}) \, \big)$\;
    }
    \If{$h>2$}{
        $M_1 \newv{\gets} \sum_{j=0}^{h-1} \matrixt{A}{r-j+i-1,h-j}$\;
        $M_2 \newv{\gets} \matrixt{A}{r+i-1,h} \otimes (\sum_{j=1}^{h-1} \matrixt{X}{r-j,h-j})$\;
        \ForAll{$\ori \in \oris_{r,\tra-2}$}{
            \eIf{$\ori=-1$}{
                $error \newv{\gets} (M_2 \neq \matrixt{B}{r,h|-1})$\; \label{algexp:checkMinusOne}
                $originsData.append\big( \, (-1,M_2,\tpmatrix{B}{r,h-1|-1}) \, \big)$ \Comment*[r]{\small $-1$ origins automatically propagate}
            }{
                $error,\matrixt{X}{r,h|\ori}\newv{\gets}msDivision(\matrixt{B}{r,h|\ori}, M_1)$\;
                \ForAll{$k \in \set{0,\ldots,|\matrixt{X}{r,h|\ori}|-1}$}{
                    $nodeAttached \newv{\gets} nodeAttached \lor attachNewNodes(\tpmatrix{X}{r,h-1|\ori}[k],\matrixt{X}{r,h|\ori}[k])$\; \label{algexp:attachGeneralCase}
                    $originsData.append\big( \, (getOrigin(\tpmatrix{X}{r,h-1|\ori}[k]),\matrixt{X}{r,h|\ori}[k] \otimes M_1,\tpmatrix{B}{r,h-1|\ori})\, \big)$\;
                }
            }
        }
    }
}
\If{$error \text{ \textbf{or} } \lnot (truncate(A,h) \times X \supseteq truncate(B,h))$}{
    \Return{false}\;
}
\If{$\lnot nodeAttached \text{ \textbf{and} } (A \times X \supseteq B)$}{
    \Return{true}\;
}
\ForAll{$origins \in enumerateAssignements(originsData)$}{ \label{algexp:enumerateAssignments}
    $applyOrigins(B,origins)$\;
    \If{$findSolution(i,A,B,X,h+1)$}{ \label{algexp:recursiveCall}
        \Return{true};
    }
    $detachOrigins(B,origins)$\; \label{algexp:backtrack1}
}
$X \newv{\gets} truncate(X,h-1)$\; \label{algexp:backtrack2}
\Return{false}\;
\end{algorithm}

\subsubsection{Another worked-out example}

\newcommand{\squeezeB}{0.5}
\newcommand{\squeezeX}{0.6}

\tikzset{
    tnode/.style={
        draw=none,
        fill=none,
        scale=1
    },
    every loop/.style={
        looseness=5
    }
}

\begin{figure}[ht!]
\begin{center}
\begin{tikzpicture}
\begin{scope}[shift={(3,0)}]
    \foreach \n in {0,...,11}{
        \node[tnode] (l3n\n) at (\squeezeB*\n,3) {$\bullet$};
    }
    \foreach \n [count=\ni from 0] in {1.5,4.5,6.5,8,9.5,11,12,13,14,15,16,17}{
        \node[tnode,scale=.9] (l2n\ni) at (\squeezeB*\n,2) {$v_{\ni}$};
    }
    \foreach \n [count=\ni from 0] in {4.75,10.25,13,15.5,17}{
        \node[tnode] (l1n\ni) at (\squeezeB*\n,1) {$\bullet$};
    }
	\node[tnode] (l0n0) at (\squeezeB*10.5,0) {$\bullet$};
	
	\foreach \n [count=\ni from 0] in {0,0,0,0,1,1,2,2,3,4,4,5}{
	    \draw[->](l3n\ni) to (l2n\n);
	}
	\foreach \n [count=\ni from 0] in {0,0,0,0,1,1,2,2,2,3,3,4}{
	    \draw[->](l2n\ni) to (l1n\n);
	}
	\foreach \n in {0,...,4}{
	    \draw[->](l1n\n) to (l0n0);
	}
	\path[->](l0n0) edge [out=225, in=315, loop] (l0n0);
	
	
	\node[tnode,scale=0.8]at( 4.75*\squeezeB-.3,1-.1){$(1)$};
	\node[tnode,scale=0.8]at(10.25*\squeezeB-.3,1){$(2)$};
	\node[tnode,scale=0.8]at(   13*\squeezeB-.4,1){$(-1)$};
	\node[tnode,scale=0.8]at(15.50*\squeezeB-.3,1+.1){$(1)$};
	\node[tnode,scale=0.8]at(   17*\squeezeB+.3,1){$(2)$};
	
	\node[tnode,scale=0.8]at(\squeezeB*10.5-.45,-.1){$(0)$};

	\node[tnode]at(\squeezeB*10.5,-1){$B$};
  
\end{scope}
\begin{scope}[shift={(0,0)}]
    \foreach \n in {0,1,2,3}{
        \node[tnode,color=red] (l3n\n) at (\squeezeX*\n,3) {$\bullet$};
    }
    \foreach \n in {0,1,2}{
        \node[tnode] (l2n\n) at (\squeezeX*\n+\squeezeX*0.5,2) {$\bullet$};
    }
    \foreach \n in {0,1}{
        \node[tnode] (l1n\n) at (\squeezeX*\n+\squeezeX*1,1) {$\bullet$};
    }
	\node[tnode] (l0n0) at (\squeezeX*1.5,0) {$\bullet$};
	\foreach \n [count=\ni from 0] in {0,0,1,2}{
	    \draw[->,color=red](l3n\ni) to (l2n\n);
	}
	\foreach \n [count=\ni from 0] in {0,0,1}{
	    \draw[->](l2n\ni) to (l1n\n);
	}
	\foreach \n in {0,1}{
	    \draw[->](l1n\n) to (l0n0);
	}
	\path[->](l0n0) edge [out=225, in=315, loop] (l0n0);
	
	\node[tnode,scale=0.8]at(\squeezeX*1.5-.3,0){$(0)$};
	\node[tnode,scale=0.8]at(\squeezeX*1  -.3,1){$(1)$};
	\node[tnode,scale=0.8]at(\squeezeX*2  +.3,1){$(2)$};
	\node[tnode,scale=0.8]at(\squeezeX*0.5-.3,2){$(3)$};
	\node[tnode,scale=0.8]at(\squeezeX*1.5-.3,2){$(4)$};
	\node[tnode,scale=0.8]at(\squeezeX*2.5+.3,2){$(5)$};

	\node[tnode]at(\squeezeX*1.5,-1){$X$};
\end{scope}
\begin{scope}[shift={(-1,0)}]
    \node[tnode] (l2n0) at (0,2) {$\bullet$};
    \node[tnode] (l1n0) at (0,1) {$\bullet$};
	\node[tnode] (l0n0) at (0,0) {$\bullet$};
	\draw[->](l2n0) to (l1n0);
	\draw[->](l1n0) to (l0n0);
	\path[->](l0n0) edge [out=225, in=315, loop] (l0n0);
	
	\node[tnode]at(0,-1){$A$};
\end{scope}
\end{tikzpicture}
\end{center}
    \caption{Three functional graphs $A$, $X$ and $B$ (origins are shown in parentheses).}
    \label{fig:ex_algexp}
\end{figure}
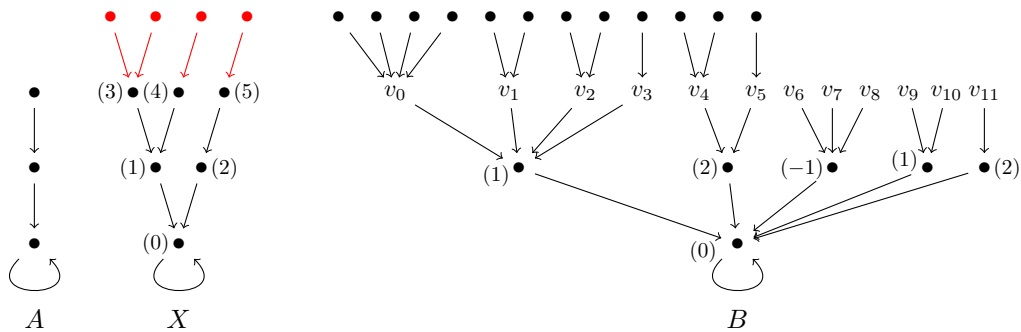

We will show here an example for the reconstruction of one layer of $X$ (the red one in Figure~\ref{fig:ex_algexp}) and the possible origin assignments on $B$. To construct this third layer of the dynamics of $X$, $\matrixt{B}{0,3}$ according to the origins of the nodes in $\tpmatrix{B}{0,1}$: $1$, $2$ and $-1$. Indeed, we have $\matrixt{B}{0,3|1}=\multiset{4,2,2,1,0,0}$ (the indegrees of $v_0$, $v_1$, $v_2$, $v_3$, $v_9$ and $v_{10}$), $\matrixt{B}{0,3|2}=\multiset{2,1,0}$ and $\matrixt{B}{0,3|-1}=\multiset{0,0,0}$. Then, we compute $M_1=\multiset{0}+\multiset{1}+\multiset{2}$ and $M_2=\multiset{0}\otimes(\multiset{2,1}+\multiset{3})=\multiset{0,0,0}$. 
After checking that $M_2$ is equal to $\matrixt{B}{0,3|-1}$, we can compute, with the $msDivision$ technique, $\matrixt{X}{0,3|1}$ such that $M_1\otimes\matrixt{X}{0,3|1}=\matrixt{B}{0,3|1}$ or $\multiset{2,1,0}\otimes\matrixt{X}{0,3|1}=\multiset{4,2,2,1,0,0}$. We found that $\matrixt{X}{0,3|1}=\multiset{1,2}$, which we can directly apply to the nodes in $X$ with label $3$ and $4$ (since they are connected to the node with origin $1$). 
We compute in the same way $\matrixt{X}{0,3|2}=\multiset{1}$. 

Finally, we assign to the nodes in $\tpmatrix{B}{0,2}$ the origins $3$, $4$ and $5$. Nodes $v_4$, $v_5$ and $v_{11}$ will receive the origin $5$ and nodes $v_6$, $v_7$ and $v_8$ will receive the origin $-1$. For the origins $3$ and $4$, we can fix one of each for the two nodes $v_9$ and $v_{10}$. In fact, the assignment chosen for nodes without predecessors will have no impact on the reconstruction of higher levels. For the nodes $(v_0,v_1,v_2,v_3)$, since both nodes in $X$ can produce a node with incoming degree $2$ in $B$, we will have to test the two assignments $(3,3,4,4)$ and $(3,4,3,4)$.

\subsubsection{Optimised assignment of origins.}

To reduce the number of possibilities and to avoid impossible assignments, we consider the height of the transients of $A$ and $B$. For some $\nodedds\in\setst$, let $\uheight(\nodedds)$ be the height of the sub-in-tree rooted at $\nodedds$ (with just the root that can be a cyclic node, in the case of $\nodedds\in\cycle$). 
For each node $v\in\tpmatrix{X}{\indexp,\tra-1|\ori}$, we know that it generates the subset $M_v=\multiset{d}\otimes M_1$. Let us consider an element $d'$ of $M_1$ and the corresponding node $u$ in $A$. 
According to our reasoning, one can give the label of $\nodedds$ to any node in $\tpmatrix{B}{\indexp,\tra-1|\ori}$ with $d \cdot d'$ predecessors. However, according to the product definition, a node $w$ of $B$, generated by $u$ of $A$ and $v$ of $X$, has $\uheight(w)=\min\set{\uheight(u),\uheight(v)}$. Then, we can only choose nodes $w$ in $\tpmatrix{B}{\indexp,\tra-1|\ori}$ with $d\cdot d'$ predecessors and such that $\uheight(w)\leq \uheight(u)$.
In line with the direct product definition, we can use this optimisation technique if both $u$ and $v$ are transient nodes.

\subsubsection{A concluding example}

Let us consider $A$, $X$, and $B$ of Figure \ref{fig:ex_algexp_height}.

\begin{figure}[ht!]
    \centering
    \begin{tikzpicture}
    \node[scale=1]() at (-5,-1){$A$};
    \node[scale=1]() at (-1.5,-1){$X$};
    \node[scale=1]() at (5,-1){$B$};

    \node[scale=0.9]() at (6.47,0.29){$w_{7}$};
    \node[scale=0.9]() at (6.20,0.90){$w_{6}$};
    \node[scale=0.9]() at (5.69,1.33){$w_{5}$};
    \node[scale=0.9]() at (5.04,1.50){$w_{4}$};
    \node[scale=0.9]() at (4.39,1.37){$w_{3}$};
    \node[scale=0.9]() at (3.85,0.97){$w_{2}$};
    \node[scale=0.9]() at (3.55,0.37){$w_{1}$};
    \node[scale=0.9]() at (3.53,-0.29){$w_{0}$};
    
    \node[scale=0.8]() at (5.65,-0.2){$(0)$};
    
    \node[scale=0.8]() at (-1.125,0.05){$(0)$};
    \node[scale=0.8]() at (-2.3,0.65){$(1)$};
    \node[scale=0.8]() at (-.7,0.65){$(2)$};
    
	\begin{oodgraph}
		\addcycle[xshift=-5.00cm, nodes prefix = a]{1};
		\addbeard[attach node = a->1,rotation angle=90]{2};
		\addbeard[attach node = a->1->1]{1};
		\addbeard[attach node = a->1->2]{2};
		\addbeard[attach node = a->1->2->1]{1};

		\addcycle[xshift=-1.5cm, nodes prefix = b]{1};
		\addbeard[attach node = b->1,rotation angle=90]{2};
		\addbeard[attach node = b->1->1]{1};
		\addbeard[attach node = b->1->2]{2};
		\addbeard[attach node = b->1->1->1,color=red]{1};

		\addcycle[xshift=5.00cm, nodes prefix = c]{1};
		\addbeard[attach node = c->1,rotation angle=90,opening angle=200,radius=1.5]{8};
		\addbeard[attach node = c->1->1,opening angle=35]{3};
		\addbeard[attach node = c->1->2,opening angle=55]{6};
		\addbeard[attach node = c->1->3,opening angle=35]{3};
		\addbeard[attach node = c->1->4,opening angle=35]{1};
		\addbeard[attach node = c->1->5,opening angle=35]{2};
		\addbeard[attach node = c->1->6,opening angle=55]{6};
		\addbeard[attach node = c->1->7,opening angle=35]{2};
		\addbeard[attach node = c->1->8,opening angle=40]{4};
		\addbeard[attach node = c->1->1->1,opening angle=25,rotation angle=-20]{3};
		\addbeard[attach node = c->1->1->2]{1};
		\addbeard[attach node = c->1->1->3,opening angle=25,rotation angle=20]{2};
		\addbeard[attach node = c->1->6->1,opening angle=25,rotation angle=-20]{3};
		\addbeard[attach node = c->1->6->2]{1};
		\addbeard[attach node = c->1->6->3,opening angle=25,rotation angle=20]{2};
		\addbeard[attach node = c->1->7->1]{1};
	\end{oodgraph}
    \end{tikzpicture}
    \caption{Relevance of height for the assignment of origins. The red part of $X$ is only shown to foretell the result of the reconstruction of the third level.}
    \label{fig:ex_algexp_height}
\end{figure}
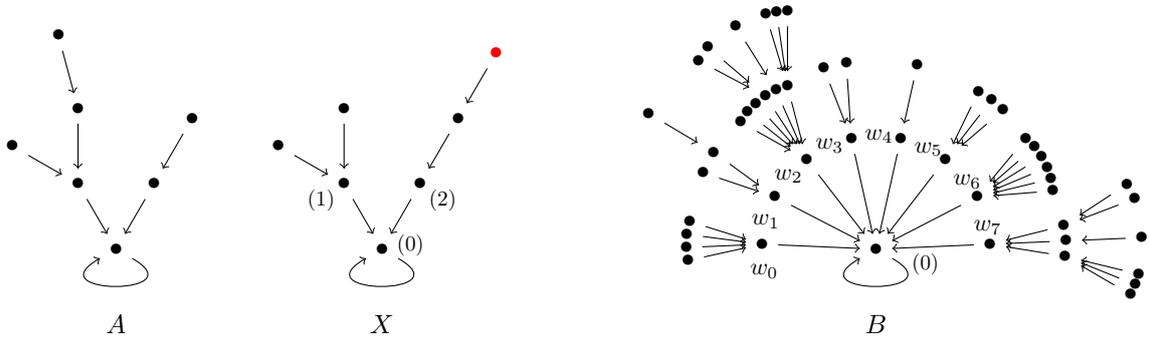

After having computed the second layer of $X$, one needs to assign one origin among three possible choices ($1$, $2$, and $-1$) for each of the eight nodes $w_i$ in $B$. Let us compute for each origin the multiset of degrees that the nodes must have:
\begin{align*}
    \ori=-1 & \rightarrow \matrixt{A}{0,2} \otimes \matrixt{X}{0,1} = \multiset{1,2}\otimes\multiset{3} = \multiset{3,6},\\
    \ori=1 & \rightarrow \multiset{2} \otimes (\matrixt{A}{0,2}+\matrixt{A}{0,1}) = \multiset{2}\otimes\multiset{1,2,3} = \multiset{2,4,6},\\
    \ori=2 & \rightarrow \multiset{1} \otimes (\matrixt{A}{0,2}+\matrixt{A}{0,1}) = \multiset{1}\otimes\multiset{1,2,3} = \multiset{1,2,3}.\\
\end{align*}

Following only the rules on the indegrees, some nodes can be given origins without ambiguity. The node $w_0$, being the only one having indegree $4$, must have origin $1$, and $w_4$ (with indegree $1$) must have origin $2$. However, several possibilities exist for the other ones:
\begin{itemize}
    \item $w_1$ and $w_3$ (with both indegree $2$) can either have the origin $1$ or $2$,
    \item $w_5$ and $w_7$ (with both indegree $3$) can either have the origin $-1$ or $2$,
    \item $w_2$ and $w_6$ (with both indegree $6$) can either have the origin $-1$ or $1$.
\end{itemize}
This gives us $8$ possible assignments. However, by looking at the heights of the subgraphs, we can reduce the number of possible assignments to only one. First, the nodes $w_i$ in $B$ originating from a node $u \in \tpmatrix{A}{0,1}$ and the cyclic point of $X$ (and hence having a $-1$ origin) must have $\uheight(w_i)=\uheight(u)$. As previously said, the height cannot be higher, and moreover, there must also be, in the in-tree rooted in $w_i$, a sub-in-tree corresponding to the in-tree rooted in $u$ (\ie, repeatedly multiplied with the cyclic point of $X$), which must have height $\uheight(u)$. As a consequence, $w_6$ and $w_7$ cannot be the nodes generated by the Cartesian product of the cyclic node of $X$ and the nodes in $\tpmatrix{X}{0,1}$ because $\uheight(w_6)<2$ and $\uheight(w_7)>1$.
On the other hand, the node having indegree $2$ and origin $1$ cannot have a rooted subtree of height exceeding $1$ since it is supposed to originate from a node $u' \in \tpmatrix{A}{0,1}$ with $\uheight(u')=1$. Therefore, $w_1$ can be excluded. Eventually, the origins for the nodes $w_0, \ldots, w_7$ are $1$, $2$, $-1$, $1$, $2$, $-1$, $1$ and $2$.

\subsubsection{Complexity.}
%
To compute the complexity of this approach, we must study the total number of assignments for a certain layer. To do so, one has to know the number of distinct $\matrixt{B}{\indexp,\tra|\ori}$ at this layer and their lengths. The first corresponds to the number of possible origins (\ie, the number of nodes in $X$ in a layer), which is $\frac{|\setst_X|}{\tra_B^{\max}}$ ($\tra_X^{\max}$ will be at most $\tra_B^{\max}$). For their sizes, the number of nodes in a layer of $B$ can be bounded as $\frac{|\setst_B|}{\tra_B^{\max}}$. 
However, the algorithm considers only the first $p_X$ values of $\indexp$, then we have $\frac{|\setst_B|\cdot p_X}{\tra_B^{\max}\cdot|\cycle_B|}$ as a bound.
From the explanation of the algorithm, the combinatorial assignment of origins is computed on the equal indegrees in a $\matrixt{B}{\indexp,\tra|\ori}$. We assume that the number of different incoming degrees is equal to the maximal one. We denote it as $d_{\max}$. Therefore, the total number $R$ of possible assignments of origins for a layer is $\left(\frac{|\setst_B|\cdot p_X}{|\setst_X|\cdot|\cycle_B|\cdot d_{\max}}!\right)^{\frac{|\setst_X|\cdot d_{\max}}{\tra_B^{\max}}}$. Since the origins are computed in every layer and this whole process is repeated for each alignment of $A$ and $B$, the total complexity is $O(|\cycle_A|\cdot(R^{\tra_B^{\max}}))$.

\subsubsection{An experimental evaluation.}
%
\begin{figure}
\begin{center}
\includegraphics[width=.9\textwidth]{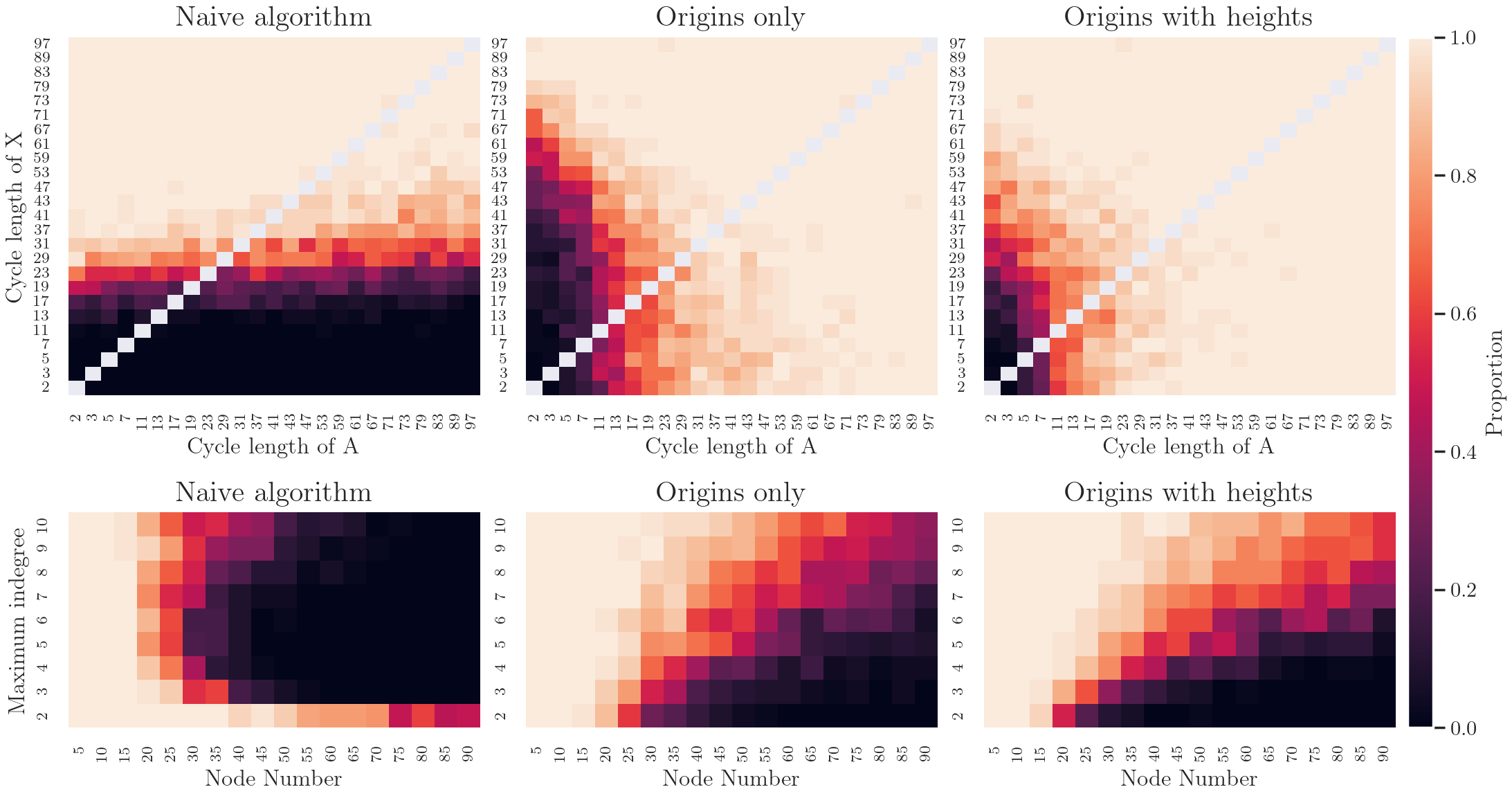}
\caption{Plots showing the proportion of random instances solved in less than $1$ second over the $50$ cases executed for each box. The three plots above are based on $30000$ instances with $A$ and $X$ of $100$ nodes each, and $B$ of $10000$ nodes, with increasing cycle lengths for $A$ ($x$-axis) or $X$ ($y$-axis). The three plots below are on $9500$ instances with $A$, $X$ and $B$ having only one cyclic node and an increasing number of nodes ($x$-axis) and maximum indegree ($y$-axis), for both $A$ and $X$.}
\label{fig:exp}
\end{center}
\end{figure}
%
%
We evaluated the exponential-time algorithm introduced in this section to investigate the performances over different instances\footnote{We ran all the experiments on an AMD EPYC 7301 processor running at 2.2 GHz, with 128 GiB of RAM. The algorithm has been implemented in Python 3.9.12 and run with version 7.3.9 of PyPy.}. The results of our analysis are shown in Figure \ref{fig:exp}. 
For this experimental evaluation, we generated random instances of equations over connected FGs according to different parameters. 

For the three plots above, we tested cases with $|\cycle_A|$ and $|\cycle_X|$ distinct prime numbers (this ensures $\gcd(|\cycle_A|,|\cycle_X|)=1$), since it represents the worst case scenario in which all nodes of $B$ are in one connected component.
As the total number of nodes is always the same for $A$ and $X$, we can remark that the axes also give the information on the number of transient nodes (inversely proportional to the cycle lengths).
We compared the naive reconstruction algorithm of $X$ (left column of Figure~\ref{fig:exp}) to our approach, with (right) and without (middle) height optimisation.
As one might have guessed, the naive algorithm strongly depends on the number of transient nodes of $X$. Indeed, the naive one does not take into account the information on the dynamics of $B$ and $A$, but just their t-abstractions to compute $\matrixt{X}{}$. Our approach, on the other hand, applies the combinatorial reasoning on elements of $B$ instead of $X$. It depends consequently on the number of transient nodes of $A$ and $X$, as $B$ does.
As explained in the section above, $\matrixt{B}{\indexp,\tra|\ori}$ comes from a multiplication of $M_1$ (which comes from $A$) and $\matrixt{X}{\indexp,\tra|\ori}$ (which comes from $X$). If one of the two is smaller, the resulting $\matrixt{B}{\indexp,\tra|\ori}$ will be smaller as well. This will decrease the number of possible assignments. The drawback is when $X$ has much fewer transient nodes than $A$. In this case, the combinatorics given by a simple naive reconstruction of the graph of $X$ from its abstraction would be more efficient as there are fewer possible cases. This explains why the naive algorithm performs better in the far left region of the plots. 

The optimisation over the assignment of origins turns out to be relevant to delaying the exponential growth of the algorithm. Note that, in emphasis with our conjecture, we considered the time up to the moment that the first solution is found. If we wanted to enumerate all solutions, the time would be multiplied by a linear factor.

As smaller cycle lengths tend to increase the execution time, we studied the extreme case of $|\cycle_A|$, $|\cycle_X|$ and $|\cycle_B|$ equal to $1$ (\ie, the plots below). As in the previous study, the naive algorithm depends almost exclusively on the number of transient points of $X$. It appears to be efficient only when the incoming degree of each node is at most $2$. It is a special case where many symmetries are involved and where a great number of distinct permutations lead to isomorphic dynamics. In the two other plots, the distribution of the incoming degree plays a greater role. One may think that greater indegrees lead to wider layers and to more combinatorics. Surprisingly, the maximum incoming degree decreases the execution time.
Actually, the combinatorial aspect is correlated to the number of nodes with the same indegree. This allows us to have much better results than the naive approach as the maximum indegree grows.
We notice that also in these cases the combinatorial optimisation improves the performances of the algorithm\footnote{The performance loss for low degrees comes from the graph generation routine which tends to create balanced in-trees. Thus, the height upgrade cannot overcome the extra time of the optimisation itself.}.

\section{Conclusion and Future Work}
\label{sec:conclusion}
We analysed basic linear equations over functional graphs. We proposed t-abstractions to model some information about the transient structure of FGs. By representing FGs mainly through the indegree of the transient nodes, we achieved three main results. First, we found an upper bound to the number of possible solutions to equations over t-abstractions, as well as for equations over FGs. Then, we introduced two different algorithms. The polynomial one (Algorithm~\ref{algpoly}) finds all solutions of basic equations over t-abstractions.
The second algorithm (Algorithm~\ref{algexpG}) reconstructs the actual graph solutions, which is not immediate even with the t-abstraction. We experimentally observed that this algorithm has good results in most cases and can compute solutions of equations with a large number of nodes in the known term, despite its exponential runtime. Finally, we introduced some further optimisations by considering interesting properties of the direct product computed over functional graphs and succeeded in pushing back a little more the exponential cost.

Future research directions comprise the further exploration of the connection with the cancellation problem. In particular, to prove Conjecture~\ref{conjecture} using our approach, and to better understand the correlation between the outgoing degree of digraphs and cancellation. Of course, another future development consists in trying to solve more complex equations. In particular, equations of the form $\matrixt{X}{} \cdot \matrixt{X}{} \supseteq \matrixt{B}{}$ represent an important step forward to the solution of generic polynomial equations over t-abstractions (with a constant right-hand side). We also want to set up a full pipeline to decide or find solutions to these more complex equations, by using the results on the basic equations, with a divide-and-conquer strategy.
Finally, it is necessary to investigate the form and the relations between different solutions (when they exist) of an equation over FGs, to discover if they exhibit some structure, for instance in terms of combinatorics, or from an algebraic point of view and to try to exploit these properties to conceive new improved algorithms.

\paragraph{Acknowledgements} Antonio E. Porreca was funded by his salary as a French State agent, affiliated to Aix-Marseille Université, CNRS, LIS, Marseille, France. This work was partially supported by the Agence National de la Recherche (ANR) project ANR-18-CE40-0002 FANs.

We also acknowledge the use of the TikZ library extension OOD used to make most of the figures in this paper.
The library can be found at \url{http://fgt.i3s.unice.fr}.

\bibliography{biblio}



\end{document}